\documentclass{amsproc}
\usepackage{xy, amssymb, hyperref}
\hypersetup{pdfauthor={Amritanshu Prasad}, pdftitle={Locally compact abelian groups with symplectic self-duality}}
\xyoption{all}

\numberwithin{equation}{section}
\theoremstyle{plain}
\newtheorem{theorem}[equation]{Theorem}
\newtheorem*{theorem*}{Theorem}
\newtheorem{prop}[equation]{Proposition}
\newtheorem*{prop*}{Proposition}
\newtheorem{lemma}[equation]{Lemma}
\newtheorem*{lemma*}{Lemma}
\newtheorem{cor}[equation]{Corollary}
\newtheorem*{cor*}{Corollary}

\newtheorem*{fq*}{Main question}

\theoremstyle{definition}

\newtheorem*{defn*}{Definition}
\newtheorem{remark}[equation]{Remark}
\newtheorem*{remark*}{Remark}
\newtheorem{example}[equation]{Example}

\newcommand{\ctorsion}[1]{\Z/p^{#1}}
\newcommand{\dash}{\nobreakdash-}
\newcommand{\dtorsion}[1]{\T[p^{#1}]}

\newcommand{\Heis}{\mathbf H}

\newcommand{\id}[1]{\mathrm{id}_{#1}}
\newcommand{\inv}{^{-1}}
\newcommand{\lca}{{locally compact abelian group}}
\newcommand{\mat}[4]{
    \begin{pmatrix}
      #1 & #2 \\
      #3 & #4
    \end{pmatrix}
}

\newcommand{\Q}{\mathbf Q}
\newcommand{\R}{\mathbf R}
\newcommand{\ses}[5]{
  \xymatrix{
    0 \ar[r] & #1 \ar[r]^{#4} & #2 \ar[r]^{#5} & #3 \ar[r] & 0
  }
}
\newcommand{\xses}[6]{
  \tag{$#1$}
  \ses{#2}{#3}{#4}{#5}{#6}
}

\newcommand{\T}{\mathbf T}
\newcommand{\toptor}{\mathrm{toptor}}
\newcommand{\transpose}[1]{#1^{\textsc{t}}}
\newcommand{\Z}{\mathbf Z}

\DeclareMathOperator{\Ext}{Ext}
\DeclareMathOperator{\rank}{rank}

\DeclareMathOperator{\Hom}{Hom}
\DeclareMathOperator*{\rdp}{{\prod}^\prime}

\title[Symplectic self-duality]{Locally Compact Abelian Groups\\with symplectic self-duality}
\author{Amritanshu Prasad}
\address{The Institute of Mathematical Sciences, Chennai.}
\email{amri@imsc.res.in}
\author{Ilya Shapiro}
\address{Institut des Hautes \'Etudes Scientifiques, Bures-sur-Yvette.}
\email{shapiro@ihes.fr}
\author{M.~K.~Vemuri}
\address{Chennai Mathematical Institute, Siruseri.}
\email{mkvemuri@gmail.com}
\keywords{Pontryagin duality, Heisenberg groups}
\subjclass[2000]{22B05}
\begin{document}
\begin{abstract}
  Is every locally compact abelian group which admits a symplectic self-duality isomorphic to the product of a locally compact abelian group and its Pontryagin dual?
  Several sufficient conditions, covering all the typical applications are found.
  Counterexamples are produced by studying a seemingly unrelated question about the structure of maximal isotropic subgroups of finite abelian groups with symplectic self-duality (where the original question always has an affirmative answer).
\end{abstract}
\maketitle
\section{The main questions}
\label{sec:main_question}
Let $\T$ denote the circle group, thought of as $\R/\Z$.
The Pontryagin dual of a \lca{} is the \lca{} $\hat L$ of continuous homomorphisms $L\to \T$, endowed with the compact open topology.
\begin{defn*}
  [Symplectic self-duality]
  A self-duality of a \lca{} $L$ is an isomorphism $\nabla:L\to \hat L$. If $\nabla(x)(x)=0$ for all $x\in L$, then $\nabla$ is called a symplectic self-duality.
\end{defn*}
\begin{defn*}
  [Isotropic subgroup]
  Let $\nabla$ be a self-duality of a \lca{} $L$.
  A subgroup $N$ of $L$ is said to be isotropic if $\nabla(x)(y)=0$ for all $x,y\in N$.
\end{defn*}
\begin{defn*}
  [Isomorphism of self-dualities]
  Let $\nabla$ and $\nabla'$ be self-dualities of \lca{s} $L$ and $L'$ respectively.  The pairs
  $(L,\nabla)$ and $(L',\nabla')$ are said to be isomorphic if there exists an isomorphism $\phi:L\to L'$ such that
  \begin{equation*}
    \nabla'(\phi(x))(\phi(y))=\nabla(x)(y).
  \end{equation*}
  If $M$ and $M'$ are subgroups in $L$ and $L'$ respectively, and $\phi$ above can be chosen so that $\phi(M)=M'$, then $(L,\nabla,M)$ and $(L',\nabla',M')$ are said to be isomorphic triples.
\end{defn*}
\begin{example}
  \label{eg:main-questions}
  For any \lca{} $A$,
  let $L^0=A\times \hat A$, and
  define $\nabla^0:L^0\to \hat L^0$ by
  \begin{equation*}
    \nabla^0(x,\chi)(y,\lambda)=\chi(y) - \lambda(x) \text{ for } x,y\in A \text{ and } \chi,\lambda \in \hat A.
  \end{equation*}
  Then, $\nabla^0$ is a symplectic self-duality, which will be called the standard symplectic self-duality of $L^0$.
  Let $B$ be any closed subgroup of $A$, in particular it can be $A$ itself or $0$.
  Define
  \begin{equation*}
    B^\perp=\{\chi\in \hat A\;|\;\chi(x)=0 \text{ for all } x\in B\}.
  \end{equation*}
  Then $M^0=B\times B^\perp$ is a maximal isotropic subgroup of $L^0$.
\end{example}
\begin{defn*}
  [Standard pairs and triples]
  Let $\nabla$ be a symplectic self-duality of a \lca{} $L$.
  We say that $(L,\nabla)$ is a standard pair (or that $\nabla$ is standard) if it is isomorphic to some $(L^0,\nabla^0)$ as in Example~\ref{eg:main-questions}.
  Let $M$ be a maximal isotropic subgroup of $L$.  Then
  $(L,\nabla,M)$ is said to be a standard triple if it is isomorphic to some $(L^0,\nabla^0,M^0)$ as in Example~\ref{eg:main-questions}.
\end{defn*}
Locally compact abelian groups with symplectic self-duality arise naturally in the context of Heisenberg groups (see Section~\ref{sec:Heis}).
The applications of Heisenberg groups to number theory and representation theory usually involve locally compact abelian groups which are based on finite fields, local fields, or the ad\`eles of a global field.
In all these cases, the \lca{s} with symplectic self-duality are standard (see Theorem~\ref{theorem:exponent-p}, Corollaries~\ref{cor:fdvs}, \ref{cor:finite-p-rank} and \ref{cor:finite}, and Theorem~\ref{theorem:divisible}).
The question of whether symplectic self-dualities of \lca s are always standard is of independent interest in view of the fact that despite some progress (see Section~\ref{sec:survey}) detailed structure theorems for general self-dual locally compact abelian groups remain elusive.
This question, which has been open until now, is resolved here.
In fact, we exhibit a group which admits a symplectic self-duality but is not isomorphic to $A\times \hat A$ for any \lca{} $A$ (Theorem~\ref{theorem:non-standard-pair}).

\section{Overview}
\label{sec:survey}
Although there is no classification of self-dual \lca{s} in general, a structure theorem for the metrizable torsion free ones was obtained by M.~Rajagopalan and T.~Soundararajan \cite{MR0247375}.
Wu \cite{MR1173767} was able to weaken the hypotheses under which the structure theorem of \cite{MR0247375} is valid, and provided a structure theorem for divisible self-dual locally compact abelian groups.

The classical Pontryagin-van Kampen structure theorem \cite[Theorem~2]{MR1503234} states that any \lca{} can be written as a product of the additive group of a finite dimensional real vector space and a \lca{} which contains a compact open subgroup.
This theorem can be used to reduce the study of self-dual \lca{s} to those which contain a compact open subgroup (see \cite[Proposition~6.5]{MR1651168}).
Such groups may be regarded as extensions over discrete abelian groups with kernels that are compact abelian groups.
In the context of self-duality extensions of the form
\begin{equation*}
  \ses GL{\hat G}fq
\end{equation*}
are of particular interest.
In particular, one may look at those extensions for which the dual extension
\begin{equation*}
  \ses GL{\hat G}{\hat q}{\hat f}
\end{equation*}
is equivalent to the original one (known as self-dual or autodual extensions).
Such self-dual groups were studied by Lawrence Corwin \cite{MR0269775} using $H^2(\hat G,G)$ and by L\'aszl\'o Fuchs and Karl H.~Hofmann \cite{MR1475126,MR1651168} using $\Ext^1(\hat G,G)$.
Corwin characterized the cocycles which give rise to autodual extensions.
He then used this characterization to give a construction for autodual extensions.
Fuchs and Hofmann \cite{MR1475126} characterized the compact abelian groups $G$ for which all extensions over $\hat G$ with kernel $G$ are self-dual.
They showed \cite{MR1651168} that groups which have a symmetric self-duality (an isomorphism $\nabla:L\to \hat L$ such that $\hat \nabla=\nabla$) and a compact open isotropic subgroup $G$ which is its own orthogonal complement are precisely those which are autodual extensions over $\hat G$ with kernel $G$ (under the hypothesis that $L$ is $2$\dash regular, i.e., multiplication by $2$ is an automorphism of $L$).
Whether or not every self-dual \lca{} admits a strong self-duality is an open question.

R.~Ranga Rao addressed the question of when \lca{s} with symplectic self-duality are standard in \cite{MR1280069}, where he tried to identify a class of \lca{s} where all symplectic self-dualities are standard.
While his paper contains valuable insights, the main theorem is false (Theorem~\ref{theorem:non-standard-pair} gives a counterexample).

The first three sections of this paper are introductory in nature, with Section~\ref{sec:Heis} explaining the relation between \lca{}s with symplectic self-duality and the theory of Heisenberg groups.

Section~\ref{sec:general-lemmas} collects some general lemmas which are used later.
The standardness of all symplectic self-dualities of groups of prime exponent is also established here.
In Section~\ref{sec:peeling}, the main tool is a lemma of Ranga Rao (Lemma~\ref{peeeling-lemma}).
This lemma is systematically applied (Corollaries~\ref{cor:fdvs}--\ref{cor:finite}) to easily prove the standardness of symplectic self-dualities for finite dimensional vector spaces over certain self-dual topological fields and groups of finite $p$\dash rank (including finite groups).

In Section~\ref{sec:red-open-compact} the study of \lca{s} with symplectic self-duality is reduced to groups which have a compact open subgroup.
In fact, standardness questions are reduced to the residual part (see Remark~\ref{remark:residual}).
As suggested by the work of Fuchs and Hofmann \cite{MR1651168}, symplectic self-dualities of such groups are studied using antidual extensions.
Theorem~\ref{theorem:anti-dual-extensions}, which is the symplectic analogue of \cite[Theorem~5.7A]{MR1651168}, characterizes \lca{s} with compact open subgroups and symplectic self-dualities in terms of antidual extensions.
It turns out that although the work of Corwin and of Fuchs and Hofmann cited above concentrates largely on symmetric self-duality, the method that is used in this work is actually even better suited to the study of symplectic self-duality.
The reason is that, unlike the situation for symmetric self-dualities,  every \lca{} with a symplectic self-duality and a compact open subgroup has a compact open isotropic subgroup which is its own orthogonal complement (Lemma~\ref{lemmma:max-iso}).
Section~\ref{sec:extensions-products} discusses the behaviour of symplectic self-duality under product decompositions of a compact open maximal isotropic subgroup.
With the help of this analysis, a simple homological criterion for standardness of triples is obtained in Section~\ref{sec:standard}.

The homological criterion for standardness (Theorem~\ref{theorem:standard-triples}) tempts one into thinking that it holds the key to proving the standardness of all symplectic self-dualities.
It turns out that non-standard triples exist even in the finite case (where all symplectic self-dualities are known to be standard).
Such examples are constructed in Section~\ref{sec:finite} by reducing the problem to one of matrix reduction.
The matrix reduction techniques can also be used to establish the standardness of triples when the compact open maximal isotropic subgroup is $(\Z/p^k\Z)^l$ or $\Z_p^l$ (Theorems~\ref{theorem:homogeneous} and~\ref{theorem:Z_p^l}).

In Section~\ref{sec:topol-tors-groups}, a criterion for the standardness of topological torsion groups (Theorem~\ref{theorem:toptor}) in terms of their primary decomposition is obtained and used (together with the examples of non-standard triples in Section~\ref{sec:finite}) to construct examples of \lca s with symplectic self-dualities which are not isomorphic to $A\times \hat A$ for any \lca{} $A$ (Theorem~\ref{theorem:non-standard-pair}).
Theorem~\ref{theorem:toptor} is also used to prove the standardness of symplectic self-dualities of divisible groups (Theorem~\ref{theorem:divisible}).
The standardness of a symplectic self-duality of a \lca{} is often reduced to its topological torsion part (Theorem~\ref{theorem:reduction-to-toptor} and Corollary~\ref{cor:torus-toptor}).
Finally it is shown that all symplectic self-dualities of \lca s with finite topological torsion part are standard (Corollary~\ref{cor:finite-toptor}).

It is worth pointing out that, in all of our standardness results (Theorem~\ref{theorem:exponent-p}, Proposition~\ref{prop:Krull} and its corollaries, Theorem~\ref{theorem:divisible} and Corollary~\ref{cor:finite-toptor}), it can be inferred that all of the symplectic self-dualities on the locally compact abelian group under consideration are isomorphic.

\noindent \textbf{Acknowledgements.} We are deeply indebted to Karl H.~Hofmann whose insightful report on an early attempt by A.P. and M.K.V. pointed us in the right direction.
His suggestions have influenced the writing to this article to a large extent.
A.P. and I.S. thank the IH\'ES for its hospitality, and for the opportunity to collaborate.
I.S. also thanks the University of Waterloo.
\section{Motivation: Heisenberg groups}
\label{sec:Heis}
Heisenberg groups were first introduced by Herman Weyl in his mathematical formulation of quantum kinematics \cite[Chap.~IV, Section~14]{Weyl50} as certain central extensions of locally compact abelian groups by $\T$.
Weyl envisioned that the abelian groups concerned could be quite general, and besides real vector spaces, he considered finite abelian groups.
The idea that Heisenberg groups provide an elegant conceptual framework within which the foundations of harmonic analysis on locally compact abelian groups can be developed has its genesis in Mackey's work \cite{Mackey49}, and is now well established.
A discussion of this and of some other areas of mathematics where Heisenberg groups play a prominent role can be found in \cite{MR578375}.

Mackey's conception of a Heisenberg group as described by Weil in \cite[Section~4]{MR0165033} is the following: given a locally compact abelian group $A$, for each element $(x,\chi)\in A\times \hat A$, consider the unitary Weyl operator $U(x,\chi)$ on $L^2(A)$ given by
\begin{equation*}
  U(x,\chi)f(z)=e^{2\pi i\chi(z)}f(z-x).
\end{equation*}
The composition of such operators is given by
\begin{equation*}
  U(x,\chi)U(y,\lambda)=e^{-2\pi i \lambda(x)}U(x+y,\chi+\lambda).
\end{equation*}
Therefore, the operators $e^{2\pi i t}U(x,\chi)$, for $t\in \T$, $x\in A$ and $\chi\in \hat A$ form a subgroup of the group $U(L^2(A))$ of unitary operators on $L^2(A)$ (endowed with the strong operator topology).
When $\T\times A\times \hat A$ is given the product topology, $(t,x,\chi)\mapsto e^{2\pi i t}U(x,\chi)$ defines a homeomorphic map from $\T\times A\times \hat A$ onto its image in $U(L^2(A))$.
The resulting group $\Heis$ is known as a Heisenberg group and is a central extension of the form
\begin{equation*}
  \ses{\T}\Heis{A\times \hat A}{}{}.
\end{equation*}
Note that the commutator on $\Heis$ is given by
\begin{equation*}
U(x,\chi)U(y,\lambda)U(x,\chi)\inv U(y,\lambda)\inv = e^{2\pi i(\chi(y)-\lambda(x))}U(0,0).
\end{equation*}
It is for these groups that Mackey proved the Stone-von Neumann theorem: that there is (up to unitary equivalence) only one irreducible unitary representation of $\Heis$ with central character $t\mapsto e^{2\pi i t}$ (called the canonical representation).

It is natural to ask, for which locally compact abelian groups does there exist a central extension by $\T$ such that there is (up to unitary equivalence) a unique irreducible unitary representation where $t\in \T$ acts as scalar multiplication by $e^{2\pi it}$.
Let $\Heis$ be a central extension:
\begin{equation}
  \label{eq:9}
  \ses{\T}\Heis L{}{}
\end{equation}
which admits a continuous section over $L$.
Thus
$\Heis=\T\times L$ as a set, and the group law is of the form
\begin{equation*}
  (t,x)(u,y)=(t+u+\psi(x,y),x+y)
\end{equation*}
for some $\psi:L\times L\to \T$ satisfying the cocycle condition
\begin{equation*}
  \psi(x,y)+\psi(x+y,z)=\psi(x,y+z)+\psi(y,z)\text{ for all } x,y,z\in L.
\end{equation*}
The commutator map $(g,h)\mapsto ghg\inv h\inv$, which is a function $\Heis\times \Heis\to \T$, descends to an alternating bilinear map $B:L\times L\to \T$ given by
\begin{equation*}
  B(x,y)=\psi(x,y)-\psi(y,x)
\end{equation*}
which is known as the commutator form for the extension (\ref{eq:9}).
Define $\nabla_B:L\to \hat L$ by $\nabla_B(x)(y)=B(x,y)$.
Then $\Heis$ has a unique (up to unitary equivalence) irreducible unitary representation such that $t\in T$ acts as scalar multiplication by $e^{2\pi it}$ if and only if $\nabla_B$ is an isomorphism.
In \cite[Defn.~1.1]{MR1116553}, this condition is taken as the definition of a Heisenberg group.

Suppose that $\Heis'$ is another Heisenberg group of the form (\ref{eq:9}) with the product law
\begin{equation*}
  (t,x)(u,y)=(t+u+\psi'(x,y), x+y)
\end{equation*}
for some other $2$-cocycle $\psi':L\times L\to\T$ for which
\begin{equation*}
  \psi'(x,y)-\psi'(y,x)=B(x,y)
\end{equation*}
is the same alternating bilinear map as for $\Heis$.
Consider the central extension $\Heis_0$
\begin{equation}
  \label{eq:10}
  \ses{\T}{\Heis_0}L{}{}
\end{equation}
where the group structure on the set $\Heis_0=\T\times L$ is given by
\begin{equation}
  \label{eq:11}
  (t,x)(u,y)=(t+u+\psi(x,y)-\psi'(x,y),x+y).
\end{equation}
The commutator form for $\Heis_0$ is given by
\begin{equation*}
  B_0(x,y)=(\psi(x,y)-\psi'(x,y))-(\psi(y,x)-\psi'(y,x))=B(x,y)-B(x,y)=0.
\end{equation*}
Therefore $\Heis_0$ is a locally compact abelian group.
Since $\T$ splits from any \lca{} \cite[6.16]{0509.22003}, so (\ref{eq:10}) splits.
Suppose that the splitting section $s:\T\to \Heis_0$ is given by $x\mapsto (\alpha(x),x)$.
Expanding the identity $s(x+y)=s(x)s(y)$ using (\ref{eq:11}) gives
\begin{equation}
  \label{eq:12}
  \alpha(x+y)-\alpha(x)-\alpha(y)=\psi(x,y)-\psi'(x,y).
\end{equation}
Define $\phi:\Heis\to \Heis'$ by $(t,x)\mapsto (t-\alpha(x),x)$.
Then,
\begin{equation*}
  \phi((t,x)(u,y))=\phi(t+u+\psi(x,y),x+y)=(t+u+\psi(x,y)-\alpha(x+y),x+y)
\end{equation*}
On the other hand,
\begin{equation*}
  \phi(t,x)\phi(u,y)=(t-\alpha(x),x)(u-\alpha(y),y)=(t+u-\alpha(x)-\alpha(y)+\psi'(x,y),x+y).
\end{equation*}
These are equal by (\ref{eq:12}).
Therefore, we have an isomorphism of central extensions
\begin{equation*}
  \xymatrix{
    0 \ar[r] & \T \ar[r] \ar@{=}[d] & \Heis  \ar[r] \ar[d]^\phi & L \ar[r] \ar@{=}[d] & 0\\
    0 \ar[r] & \T \ar[r] & \Heis' \ar[r] & L \ar[r] & 0.
  }
\end{equation*}
Conversely, it is easy to see that isomorphic central extensions of the type (\ref{eq:9}) give rise to the same alternating bicharacter.
A well-known result is obtained:
\begin{prop}
  \label{prop:motiv-heis-groups}
  The commutator form gives rise to an injective map from the set of equivalence classes of Heisenberg groups whose quotient modulo $\T$ is $L$ and the set of symplectic self-dualities $L\to \hat L$.
\end{prop}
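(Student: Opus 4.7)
The plan is to verify three things: that the commutator form only depends on the equivalence class of the central extension, that it takes values in symplectic self-dualities, and that it is injective. The first two are essentially bookkeeping, while the injectivity is the content of the preceding computation in the text, which only needs to be organized into a proof.

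For the first two points, I would note that the commutator $(g,h)\mapsto ghg\inv h\inv$ of any central extension of $L$ by $\T$ automatically descends to a function $L\times L\to \T$ because commutators of central elements are trivial; this descended function is preserved by any equivalence of central extensions because equivalences are group isomorphisms fixing the extremes. It is alternating since $[g,g]=e$, so $B(x,x)=0$, i.e., $\nabla_B(x)(x)=0$. By the definition of a Heisenberg group adopted from \cite[Defn.~1.1]{MR1116553}, $\nabla_B$ is an isomorphism; therefore $\nabla_B$ is a symplectic self-duality.

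For injectivity, I would follow the construction already sketched in the excerpt. Given two Heisenberg groups $\Heis$ and $\Heis'$ over $L$ with the same commutator form $B$, choose continuous sections so that they are described by cocycles $\psi,\psi'$, and form the central extension $\Heis_0$ by the cocycle $\psi-\psi'$. Its commutator form equals $B-B=0$, so $\Heis_0$ is a \lca{} extension of $L$ by $\T$. By the injectivity of $\T$ in the category of locally compact abelian groups \cite[6.16]{0509.22003}, the extension (\ref{eq:10}) admits a continuous splitting $s(x)=(\alpha(x),x)$. Expanding $s(x+y)=s(x)s(y)$ gives the key identity (\ref{eq:12}), and the map $\phi(t,x)=(t-\alpha(x),x)$ is then a continuous group homomorphism $\Heis\to\Heis'$ by the direct calculation already performed. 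Since $\phi$ restricts to the identity on $\T$ and descends to the identity on $L$, and it has an obvious continuous inverse $(t,x)\mapsto(t+\alpha(x),x)$, it is an equivalence of central extensions.

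The main obstacle, and the step to flag explicitly, is the continuity of the splitting $\alpha$: this is where the locally compact hypothesis and the quoted splitting theorem for $\T$ are indispensable. Without continuity one would only recover an abstract-group-theoretic equivalence, which is not enough in the topological category. Once continuity is secured, everything else reduces to the cocycle manipulation that is essentially complete in the excerpt.
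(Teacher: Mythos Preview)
Your proposal is correct and follows essentially the same approach as the paper: the paper's proof is precisely the discussion preceding the proposition, which constructs $\Heis_0$ from $\psi-\psi'$, invokes the splitting of $\T$ from any \lca{} \cite[6.16]{0509.22003} to obtain $\alpha$, and then builds the equivalence $\phi(t,x)=(t-\alpha(x),x)$. Your write-up is slightly more explicit about well-definedness and about the continuity of $\alpha$, but the substance is identical.
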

Under the above correspondence, Mackey's Heisenberg groups correspond to standard pairs.

Realizations of the canonical representation of a Heisenberg group where $t\in \T$ acts as multiplication by $\theta(t)=e^{2\pi it}$ are obtained by extending $\theta$ to a unitary character of a maximal abelian subgroup of $\Heis$ and then inducing to $\Heis$.
The maximal abelian subgroups of $\Heis$ are precisely the pre-images of the maximal isotropic subgroups of $L$ with respect to the commutator form.
The perfect realizations of the canonical representation of $\Heis$ correspond precisely to systems of imprimitivity, as is explained in \cite{pre05272580}.
The maximal systems of imprimitivity for the canonical representation are expected to correspond to the maximal isotropic subgroups of $L$.
This has been shown when $L$ is a finite dimensional  real vector space in \cite{pre05272580} and when $L$ is a finite abelian group in \cite{iafhg}.
\section{Generalities}
\label{sec:general-lemmas}
Suppose that $L$ has a product decomposition $L=\prod_i L_i$.
Then $\hat L=\bigoplus \widehat{L_i}$, where $\widehat{L_i}$ is identified with the characters of $L$ which vanish on the kernel of the projection $L\to L_i$.

If $L$ is a finite product $\prod_{i=1}^n L_i$, and $\nabla_i$ is a self-duality of $L_i$, then $L$ inherits a symplectic self-duality $\nabla$ by setting
\begin{equation*}
  \nabla(x_1,\ldots,x_n)(y_1,\ldots,y_n)=\sum_{i=1}^n \nabla_i(x_i)(y_i).
\end{equation*}
The pair $(L,\nabla)$ is called the orthogonal sum of the $(L_i,\nabla_i)$'s.
Note that the orthogonal sum of standard pairs is standard.

If $\nabla$ is a self-duality of $L$, then $A$ is an isotropic subgroup of $L$ if and only if $A\subset \nabla\inv(A^\perp)$ (recall that $A^\perp$ is the subgroup of $\hat L$ consisting of characters which vanish on $A$).
If $A=\nabla\inv(A^\perp)$ then $A$ is a maximal isotropic subgroup.

The following result is well-known, and very easy to prove:
\begin{lemma}
  \label{lemma:subquotient}
  Let $\nabla$ be a self-duality of a \lca{} $L$.
  Suppose that $A$ is a closed isotropic subgroup of $L$.
  Let $L_A=\nabla\inv(A^\perp)/A$.
  Then $A^\perp/\nabla(A)$ is canonically isomorphic to the Pontryagin dual of $L_A$, and $\nabla$ descends to a well-defined isomorphism $\nabla_A:L_A\to \widehat{L_A}$.
  If $\nabla$ is a symplectic or symmetric self-duality, then so is $\nabla_A$.
\end{lemma}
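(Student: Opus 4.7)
The plan is to unwind the definitions and apply Pontryagin duality to the short exact sequence
\[
  0\to A\to \nabla\inv(A^\perp)\to L_A\to 0.
\]
First I would observe that $\nabla\inv(A^\perp)$ is a closed subgroup of $L$ (the preimage of a closed subgroup under the continuous isomorphism $\nabla$) and contains $A$ since $A$ is isotropic; hence $L_A$ is a well-defined \lca.

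To identify $\widehat{L_A}$ with $A^\perp/\nabla(A)$, I would introduce the restriction map $\rho:A^\perp\to \widehat{L_A}$ sending $\chi\in A^\perp$ to the character $y+A\mapsto \chi(y)$ on $L_A$; this is well-defined because $\chi$ vanishes on $A$. Surjectivity of $\rho$ follows from the standard extension of characters: any character of $L_A$ lifts to a character of $\nabla\inv(A^\perp)$ that vanishes on $A$, and any extension to $\hat L$ automatically lies in $A^\perp$. The kernel of $\rho$ is $A^\perp\cap (\nabla\inv(A^\perp))^\perp$. Since $A$ is isotropic, $\nabla(A)\subset A^\perp$, so the crux is the identity
\[
  (\nabla\inv(A^\perp))^\perp=\nabla(A).
\]
This is where the symplectic (or symmetric) hypothesis is essential: using $\nabla(x)(y)=\pm\nabla(y)(x)$, the condition $\nabla(z)(y)=0$ for all $y\in \nabla\inv(A^\perp)$ becomes $\phi(z)=0$ for all $\phi\in A^\perp$, so $z\in (A^\perp)^\perp=A$ by the double-annihilator theorem for closed subgroups of \lca s. Continuity and bicontinuity of $\rho$ pass through the usual Pontryagin duality machinery for quotients and closed subgroups.

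With this identification of $\widehat{L_A}$, the descent is immediate: for $x\in \nabla\inv(A^\perp)$, one has $\nabla(x)\in A^\perp$, and the coset $\nabla(x)+\nabla(A)$ depends only on $x+A$. Setting $\nabla_A(x+A)=\nabla(x)+\nabla(A)$ and translating through $\rho$, this is exactly the pairing $(x+A,\,y+A)\mapsto \nabla(x)(y)$ on $L_A$, from which the symplectic condition $\nabla(x)(x)=0$ (resp.\ the symmetric condition $\nabla(x)(y)=\nabla(y)(x)$) is inherited immediately. Bijectivity of $\nabla_A$ follows from that of $\nabla$. The main obstacle---everything else being bookkeeping with annihilators---is the annihilator identity above; this is precisely the place where general (non-symplectic, non-symmetric) self-dualities would fail, since one cannot otherwise swap the two arguments of $\nabla$ to convert the condition on $\nabla(z)$ into membership in $A$.
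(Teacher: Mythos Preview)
The paper does not supply a proof; it simply calls the result ``well-known, and very easy to prove.'' Your argument is the standard one and is correct.

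One sharpening of your closing remark. The annihilator identity actually holds for \emph{any} isomorphism $\nabla:L\to\hat L$, but in the form $(\nabla^{-1}(A^\perp))^\perp=\hat\nabla(A)$: write a general character as $\hat\nabla(w)$ instead of $\nabla(z)$, and use $\hat\nabla(w)(y)=\nabla(y)(w)$ to see that $\hat\nabla(w)$ annihilates $\nabla^{-1}(A^\perp)$ precisely when every element of $A^\perp$ kills $w$, i.e.\ $w\in A$. The symplectic or symmetric hypothesis thus enters only to identify $\hat\nabla(A)$ with $\nabla(A)$ (since then $\hat\nabla=\pm\nabla$). Your diagnosis is therefore right in spirit: for a general self-duality the canonical Pontryagin dual of $L_A$ is $A^\perp/\hat\nabla(A)$ rather than $A^\perp/\nabla(A)$, and the pairing $(x+A,y+A)\mapsto\nabla(x)(y)$ need not even be well-defined in the first variable. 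So the lemma as phrased already tacitly uses the extra hypothesis in its first assertion; since the paper only ever invokes it in the symplectic case, nothing downstream is affected.
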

If $L=\bigoplus_{j\in J} L_j$ and $L'=\prod_{i\in I} L'_i$ (possibly infinite decompositions) every homomorphism $\phi:L\to L'$ can be expressed uniquely in the form
\begin{equation*}
  \phi\Big(\sum_j x_j\Big)_i=\sum_j \phi_{ij}(x_j),
\end{equation*}
where $\phi_{ij}:L_j\to L'_i$ is a homomorphism for each $i\in I$ and $j\in J$.
When $|I|$ and $|J|$ are finite it is convenient to denote $\phi$ by the $|I|\times |J|$ matrix $(\phi_{ij})$.
\begin{lemma}
  \label{lemma:polarization-implies-standard}
  Let $\nabla$ be a symplectic self-duality of a \lca{} $L$.
  If there exist isotropic subgroups $L_1$ and $L_2$ of $L$ such that $L=L_1\times L_2$ then $(L,\nabla,L_1)$ is a standard triple.
\end{lemma}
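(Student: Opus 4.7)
My plan is to use the hypothesis $L=L_1\times L_2$ to write $\nabla$ in block-matrix form with respect to this decomposition, then show the blocks are forced into exactly the shape of the standard pairing.

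First, since $L=L_1\times L_2$, Pontryagin duality gives $\hat L=\hat L_1\times \hat L_2$, and as in the discussion preceding the lemma, $\nabla$ decomposes as a $2\times 2$ matrix of continuous homomorphisms $\nabla_{ij}:L_j\to \hat L_i$, characterized by
\begin{equation*}
\nabla(x_1,x_2)(y_1,y_2)=\sum_{i,j}\nabla_{ij}(x_j)(y_i).
\end{equation*}
Isotropy of $L_1$ (set $x_2=y_2=0$) gives $\nabla_{11}=0$, and isotropy of $L_2$ (set $x_1=y_1=0$) gives $\nabla_{22}=0$. Since $\nabla$ is an isomorphism, the off-diagonal blocks $\nabla_{12}:L_2\to \hat L_1$ and $\nabla_{21}:L_1\to \hat L_2$ must each be isomorphisms of \lca{s}. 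Finally, the symplectic condition $\nabla(x)(x)=0$ for $x=(x_1,x_2)$ reduces to
\begin{equation*}
\nabla_{12}(x_2)(x_1)+\nabla_{21}(x_1)(x_2)=0,
\end{equation*}
which, under the canonical identification $L_1\simeq \widehat{\hat L_1}$, says precisely that $\nabla_{21}=-\widehat{\nabla_{12}}$.

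To finish, I would set $A=L_1$ and $B=L_1\subset A$ in Example~\ref{eg:main-questions}, so $L^0=L_1\times \hat L_1$, $M^0=L_1\times 0$, and define $\phi:L\to L^0$ by $\phi(x_1,x_2)=(x_1,\nabla_{12}(x_2))$. This is an isomorphism of \lca{s} sending $L_1$ onto $M^0$, and a direct calculation shows
\begin{equation*}
\nabla^0(\phi(x))(\phi(y))=\nabla_{12}(x_2)(y_1)-\nabla_{12}(y_2)(x_1),
\end{equation*}
while $\nabla(x)(y)=\nabla_{12}(x_2)(y_1)+\nabla_{21}(x_1)(y_2)$; these agree via the relation $\nabla_{21}=-\widehat{\nabla_{12}}$ derived above. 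Hence $\phi$ exhibits $(L,\nabla,L_1)$ as isomorphic to $(L^0,\nabla^0,M^0)$, as required.

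There is no serious obstacle here: the argument is essentially an exercise in matrix bookkeeping. The only point that requires a little care is the sign convention, since the minus sign in the definition $\nabla^0(x,\chi)(y,\lambda)=\chi(y)-\lambda(x)$ is precisely what must be matched by the minus sign in $\nabla_{21}=-\widehat{\nabla_{12}}$ coming from the symplectic condition; that the two signs are compatible is exactly what makes the construction work.
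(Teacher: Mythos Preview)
Your proof is correct and follows the same approach as the paper: write $\nabla$ as a $2\times 2$ block matrix, use isotropy to kill the diagonal blocks and the symplectic condition to get $\nabla_{21}=-\widehat{\nabla_{12}}$, then use $\nabla_{12}$ (or its inverse) to build the isomorphism with $(L_1\times\widehat{L_1},\nabla^0,L_1\times 0)$. The paper's version is just more compressed and uses $\phi$ in the opposite direction.
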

\begin{proof}
  As explained at the beginning of this section, $\hat L$ inherits a decomposition $\widehat{L_1}\times \widehat{L_2}$.
  Using matrix notation for $\nabla$ with respect to these decompositions of $L$ and $\hat L$, the hypothesis implies that
  \begin{equation*}
    \nabla = \mat 0{\nabla_{12}}{-\widehat{\nabla_{12}}}0,
  \end{equation*}
  where $\nabla_{12}:L_2\to \widehat{L_1}$ is an isomorphism.
  The map $\phi:L_1\times \widehat{L_1}\to L$ defined by $\phi(x,\chi)=(x,\nabla_{12}\inv(\chi))$ makes $(L,\nabla,L_1)$ standard.
\end{proof}
\begin{lemma}
  \label{lemma:split-maximal-isotropics}
  Let $\nabla$ be a symplectic self-duality of a \lca{} $L$.
  Let $L_1$ be a maximal isotropic subgroup of $L$ which admits a complement $L_2$.
  If multiplication by $2$ is a surjective endomorphism of $\Hom(L_2,\widehat{L_2})$ then $L_1$ admits an isotropic complement.
\end{lemma}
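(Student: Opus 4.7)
The approach is to construct the desired complement as the graph of a continuous homomorphism $f : L_2 \to L_1$, reducing isotropy to an equation in $\Hom(L_2, \widehat{L_2})$ that the hypothesis makes solvable.

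First, I would decompose $\nabla$ relative to the splitting $L = L_1 \times L_2$ as a $2 \times 2$ matrix of blocks $\nabla_{ij} : L_j \to \widehat{L_i}$. Isotropy of $L_1$ forces $\nabla_{11} = 0$; the symplectic identity $\widehat{\nabla} = -\nabla$ yields, blockwise, $\widehat{\nabla_{ij}} = -\nabla_{ji}$, so in particular $\widehat{\nabla_{22}} = -\nabla_{22}$ and $\nabla_{21} = -\widehat{\nabla_{12}}$. A short check using the bijectivity of $\nabla$ together with the maximality condition $\nabla^{-1}(L_1^{\perp}) = L_1$ shows that $\nabla_{12}$ (and hence $\nabla_{21}$) is an isomorphism, exactly as in the proof of Lemma~\ref{lemma:polarization-implies-standard}.

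Next, every complement of $L_1$ in $L$ is the graph $L_2' = \{(f(y), y) : y \in L_2\}$ of a unique continuous homomorphism $f : L_2 \to L_1$. Expanding $\nabla((f(x),x))((f(y),y))$ and using $\widehat{\nabla_{12}} = -\nabla_{21}$, the isotropy of $L_2'$ boils down to the single equation
\[
  g - \widehat{g} = -\nabla_{22}
\]
in $\Hom(L_2, \widehat{L_2})$, where $g := \widehat{f} \circ \nabla_{12}$. Since $\nabla_{12}$ is an isomorphism, $g$ ranges bijectively over $\Hom(L_2, \widehat{L_2})$ as $f$ varies over $\Hom(L_2, L_1)$, so it suffices to solve this equation.

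Finally, I would solve the equation. For any anti-self-dual $\gamma$ one has $\gamma - \widehat{\gamma} = 2\gamma$, so it is enough to exhibit an anti-self-dual $\gamma$ with $2\gamma = -\nabla_{22}$. Using the surjectivity of multiplication by $2$, pick any preimage $g_0$ with $2g_0 = -\nabla_{22}$; since $\widehat{\nabla_{22}} = -\nabla_{22}$, one has $2(g_0 + \widehat{g_0}) = 0$, so $g_0 + \widehat{g_0}$ is $2$-torsion, and $g_0$ must be corrected by a $2$-torsion element of $\Hom(L_2, \widehat{L_2})$ to enforce anti-self-duality. The main technical obstacle is precisely this last adjustment: it is immediate when multiplication by $2$ is also injective, but in the general surjective case one must exploit the hypothesis beyond naive halving, locating a $2$-torsion correction that absorbs the discrepancy $g_0 + \widehat{g_0}$. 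Once such a $g$ is produced, the corresponding $f$ is recovered by inverting $g = \widehat{f} \circ \nabla_{12}$, and its graph is the desired isotropic complement of $L_1$.
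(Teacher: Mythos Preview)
Your approach coincides with the paper's: both decompose $\nabla$ into blocks with respect to $L=L_1\times L_2$, observe that maximality and isotropy of $L_1$ force
\[
\nabla=\mat{0}{\nabla_{12}}{-\widehat{\nabla_{12}}}{\nabla_{22}}
\]
with $\nabla_{12}$ invertible, and then look for an isotropic complement as the image of $L_2$ under a shear $\alpha=\mat{\id{L_1}}{X}{0}{\id{L_2}}$ (equivalently, as the graph of $X:L_2\to L_1$). In both cases the problem reduces to killing the $(2,2)$ block of $\hat\alpha\circ\nabla\circ\alpha$, which amounts to solving $g-\hat g=-\nabla_{22}$ in $\Hom(L_2,\widehat{L_2})$ after the substitution $g=\hat X\nabla_{12}$ (or $X'=\widehat{\nabla_{12}}\,X$, in the paper's notation).

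There is, however, a genuine gap in your proposal, and you name it yourself: having chosen $g_0$ with $2g_0=-\nabla_{22}$, you only obtain that $g_0+\widehat{g_0}$ is $2$\dash torsion, and you do not explain how to locate a $2$\dash torsion correction $\tau$ with $\tau+\hat\tau=-(g_0+\widehat{g_0})$. Surjectivity of multiplication by $2$ alone does not obviously furnish such a $\tau$, so as written your argument stops short of producing the required $g$. The paper's proof does not address this point either: it simply picks any $X'$ with $2X'=\nabla_{22}$, sets $X=\widehat{\nabla_{12}}^{\,-1}\circ X'$, and asserts equation~(\ref{eq:38}) without verifying that $\widehat{X'}=-X'$ (which is what the vanishing of the $(2,2)$ block actually requires). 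In every application made in the paper the group in question is $2$\dash regular, so multiplication by $2$ is bijective, there is no $2$\dash torsion, and the difficulty evaporates; but under the bare hypothesis of surjectivity, neither your argument nor the paper's displayed computation closes this gap. If you want a complete proof under the stated hypothesis, you must either supply the missing correction step or strengthen the hypothesis to bijectivity of multiplication by $2$, which suffices for all the uses downstream.
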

\begin{proof}
  As usual, identify $\hat L$ with $\widehat{L_1} \times \widehat{L_2}$.
  The hypothesis implies that
  \begin{equation*}
    \nabla=\mat 0{\nabla_{12}}{-\widehat{\nabla_{12}}}{\nabla_{22}},
  \end{equation*}
  where $\widehat{\nabla_{22}}=-\nabla_{22}$.
  Since $\nabla$ is an isomorphism, so is $\nabla_{12}$.
  Let $\alpha$ be the automorphism $\alpha:L\to L$ with matrix $\mat{\id{L_1}}X0{\id{L_2}}$ where $X=\widehat{\nabla_{12}}\inv\circ X'$ and $2X'=\nabla_{22}$.
  Then
  \begin{equation}
    \label{eq:38}
    \hat \alpha\circ \nabla \circ \alpha = \mat 0{\nabla_{12}}{-\widehat{\nabla_{12}}}0.
  \end{equation}
  If $L_2'=\alpha(L_2)$, then $L_2'$ is also a complement of $L_1$ and (\ref{eq:38}) implies that $L_2'$ is isotropic.
\end{proof}
\begin{remark}
  \label{remark:split-maximal-isotropics}
  Lemma~\ref{lemma:split-maximal-isotropics} holds under a weaker hypothesis.
  Suppose that $\nabla_{22}=\phi-\hat \phi$ for some $\phi:L_2\to \widehat{L_2}$.
  Then the proof goes through with $X=-\widehat{\nabla_{12}}\inv\circ \phi$.
\end{remark}
Let $p$ be a prime number.
Recall that a group has exponent $p$ if every element other than the identity has order $p$.
Thus \lca s of exponent $p$ are just locally compact $\mathbb{F}_p$-vector spaces.
\begin{theorem}
  \label{theorem:exponent-p}
  If $\nabla$ is a symplectic self-duality of a \lca{} $L$ of exponent $p$ for some prime $p$, then $(L,\nabla)$ is standard.
\end{theorem}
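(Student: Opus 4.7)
The plan is to exhibit a polarization $L=L_1\oplus L_2$ into closed isotropic subgroups, and then invoke Lemma~\ref{lemma:polarization-implies-standard} to conclude $(L,\nabla,L_1)$ is standard. The path I envision has three steps: locate a compact open maximal isotropic subgroup $M$; split $L$ topologically as $M\oplus D$ (possible because $L/M$ is a discrete $\mathbb{F}_p$-vector space, hence $\mathbb{F}_p$-free, so the projection $L\to L/M$ admits a continuous section); and modify $D$ into an isotropic complement via Lemma~\ref{lemma:split-maximal-isotropics} or Remark~\ref{remark:split-maximal-isotropics}.

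For the first step, note that $L$ has no vector-space part (exponent $p$), so by Pontryagin--van Kampen it contains a compact open subgroup $C$. Its annihilator $C^\perp$ is compact open in $\hat L$, so $N:=C\cap\nabla\inv(C^\perp)$ is a compact open isotropic subgroup. Zorn's lemma, applied to the poset of closed isotropic subgroups of $L$ containing $N$ (with upper bounds given by closures of chain unions, isotropy preserved by continuity of $\nabla$), yields a maximum $M$. One verifies $M=\nabla\inv(M^\perp)$, for otherwise any $x\in\nabla\inv(M^\perp)\setminus M$ would enlarge $M$ to the still closed isotropic subgroup $M+\mathbb{F}_p\cdot x$ (closed as a finite union of cosets, isotropic using $\nabla(x)(x)=0$). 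The inclusion $N\subset M$ then forces $M\subset\nabla\inv(N^\perp)$, which is compact open, so $M$ is compact; and $M$ is open because $L/M\cong\hat M$ is discrete.

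Picking an $\mathbb{F}_p$-basis of $L/M$ and lifting it yields a topological splitting $L=M\oplus D$ with $D$ discrete. With respect to this decomposition $\nabla$ takes the matrix form of Lemma~\ref{lemma:split-maximal-isotropics}, with $\nabla_{22}:D\to\hat D$ satisfying $\widehat{\nabla_{22}}=-\nabla_{22}$. For odd $p$, multiplication by $2$ is an automorphism of the $\mathbb{F}_p$-vector space $\Hom(D,\hat D)$, and Lemma~\ref{lemma:split-maximal-isotropics} directly supplies an isotropic complement. The main obstacle is the case $p=2$, where one must instead produce $\phi:D\to\hat D$ with $\nabla_{22}=\phi-\hat\phi$ in order to apply Remark~\ref{remark:split-maximal-isotropics}. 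In characteristic $2$ the identity $\widehat{\nabla_{22}}=-\nabla_{22}$ just says the bilinear form $\nabla_{22}(x)(y)$ is symmetric, while $\nabla_{22}(x)(x)=0$ (from $\nabla$ symplectic) says its diagonal vanishes. Pick a well-ordered basis $\{e_i\}_{i\in I}$ of the discrete $\mathbb{F}_2$-vector space $D$ and set $\phi(e_i)(e_j)=\nabla_{22}(e_i)(e_j)$ for $i<j$ and $\phi(e_i)(e_j)=0$ otherwise, extended $\mathbb{F}_2$-bilinearly; continuity is automatic since $D$ is discrete, and the symmetric-with-zero-diagonal structure of $\nabla_{22}$ makes $\phi+\hat\phi=\nabla_{22}$, which reads $\phi-\hat\phi=\nabla_{22}$ in characteristic $2$. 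The resulting isotropic complement, together with $M$, polarizes $L$, and Lemma~\ref{lemma:polarization-implies-standard} closes the argument.
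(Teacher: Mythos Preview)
Your proof is correct and follows essentially the same route as the paper: locate a compact open maximal isotropic subgroup, split it off using that the quotient is a discrete $\mathbb{F}_p$-vector space, then apply Lemma~\ref{lemma:split-maximal-isotropics} for odd $p$ and Remark~\ref{remark:split-maximal-isotropics} (with the triangular $\phi$ built from a well-ordered basis) for $p=2$. The only difference is that the paper invokes Lemma~\ref{lemma:compact-open-maximal-isotropic} for the first step, whereas you supply a self-contained Zorn's-lemma argument; both are valid and yield the same object.
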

\begin{proof}
  Being a locally compact vector space over a finite field, $L$ has a compact open subgroup (namely the span of a compact open neighborhood of the identity) and so it has a maximal isotropic subgroup $L_1$ by Lemma~\ref{lemma:compact-open-maximal-isotropic}.
  As a vector space $L_1$ splits from $L$, but since the quotient is discrete, it splits as a topological vector space as well.
  Therefore when $p$ is odd, the standardness of $(L,\nabla)$ follows from Lemmas~\ref{lemma:polarization-implies-standard} and~\ref{lemma:split-maximal-isotropics}.
  For $p=2$, denote by $L_2$ any complement of $L_1$.
  Being a discrete abelian group of exponent $p$, $L_2$ is isomorphic to $(\Z/2\Z)^{\oplus \lambda}$ for some cardinal $\lambda$. Pick $I$ such that $\lambda$ is the cardinality of $I$ so that $\nabla_{22}:L_2\rightarrow\widehat{L_2}$ decomposes into $(\nabla_{22})_{ij}$ as explained in the paragraph preceding Lemma~\ref{lemma:polarization-implies-standard}.  Note that $(\nabla_{22})_{ii}=0$ and $(\nabla_{22})_{ij}=-\widehat{(\nabla_{22})_{ji}}$.
  Choose a total ordering of $I$.
  Define $\phi: L_2\to \widehat{L_2}$ by
  \begin{equation*}
    \phi_{ij}=
    \begin{cases}
      (\nabla_{22})_{ij} & \text{ if } i>j\\
      0 & \text{ otherwise}.
    \end{cases}
  \end{equation*}
  Then $\nabla_{22}=\phi-\hat \phi$ and Remark~\ref{remark:split-maximal-isotropics} applies.
\end{proof}
\section{The peeling lemma and some applications}
\label{sec:peeling}
The following lemma, originally due to Ranga Rao \cite[Lemma~13]{MR1280069}, allows one to peel off an isotropic summand and its Pontryagin dual from a self-dual group, thereby decomposing it into the orthogonal sum of a smaller self-dual group and a group of the form $A\times \hat A$.
It will be used to establish the standardness of \lca{s} with symplectic self-duality when the group in question is a finite direct sum of certain kinds of elementary subgroups (see Proposition~\ref{prop:Krull}).
\begin{lemma}
  [Peeling lemma]
  \label{peeeling-lemma}
  Let $\nabla$ be a self-duality of a locally compact abelian group $L$.
  Suppose that $A$ is a split closed isotropic subgroup of $L$.
  Then $(L,\nabla)$ is isomorphic to the orthogonal direct sum of $(A\times \hat A,\nabla')$ and $(L_A,\nabla_A)$, where $\nabla'$ is a symplectic self-duality of $A\times \hat A$ for which $A$ is isotropic and $(L_A,\nabla_A)$ is as in Lemma~\ref{lemma:subquotient}.
\end{lemma}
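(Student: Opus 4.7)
The plan is to construct the orthogonal decomposition explicitly by pulling back a splitting of $\hat L$ through $\nabla^{-1}$. First I would use the split hypothesis to choose a closed complementary subgroup $C$ of $A$, giving a topological direct sum $L = A \oplus C$. Dualizing yields $\hat L = \sigma(\hat A) \oplus A^\perp$, where $\sigma\colon \hat A \hookrightarrow \hat L$ is the inclusion extending characters by zero on $C$ and $A^\perp \cong \hat C$. The isotropy of $A$ forces the $A \to \hat A$ block of $\nabla$ to vanish, so $\nabla(A) \subseteq A^\perp$.

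Next I would define $B := \nabla^{-1}(\sigma(\hat A))$, a closed subgroup of $L$ that $\nabla$ maps isomorphically onto $\sigma(\hat A) \cong \hat A$. Since $\hat L = \sigma(\hat A) \oplus A^\perp$ as a topological direct sum and $\nabla$ is a topological isomorphism, this yields $L = B \oplus \nabla^{-1}(A^\perp)$ topologically. Using the matrix form of $\nabla$ with respect to $L = A \oplus C$, one checks that $\nabla^{-1}(A^\perp) = A \oplus K$ topologically, where $K := C \cap \nabla^{-1}(A^\perp)$ is the kernel of the induced map $C \to \hat A$. The canonical map $K \to L_A = \nabla^{-1}(A^\perp)/A$ is then an isomorphism of locally compact abelian groups, and $L = A \oplus B \oplus K$ is a topological direct sum of closed subgroups.

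The heart of the argument is to show that $A \oplus B$ and $K$ are orthogonal with respect to $\nabla$, and this is where the alternating (symplectic) property enters crucially. For $a \in A$ and $k \in K$, the alternating identity gives $\nabla(a)(k) = -\nabla(k)(a) = 0$, since $\nabla(k) \in A^\perp$. For $b \in B$ and $k \in K$, $\nabla(b)$ lies in $\sigma(\hat A)$ by construction and hence vanishes on $C \supseteq K$. The alternating property then yields orthogonality in the other direction as well, so $\nabla$ is block-diagonal with respect to $L = (A \oplus B) \oplus K$, producing the desired orthogonal direct sum $(L, \nabla) = (A \oplus B, \nabla_1) \oplus (K, \nabla_2)$.

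Finally I would identify $A \oplus B$ with $A \times \hat A$ via the isomorphism $B \cong \hat A$ supplied by $\nabla$, transporting $\nabla_1$ to a self-duality $\nabla'$ on $A \times \hat A$. Since $\nabla_1(x)(x) = \nabla(x)(x) = 0$ for all $x \in A \oplus B$ and $A$ remains isotropic, $\nabla'$ is symplectic with $A$ isotropic. Likewise, under the identification $K \cong L_A$, the restriction $\nabla_2$ corresponds to $\nabla_A$ of Lemma~\ref{lemma:subquotient}. The main technical obstacle I anticipate is keeping all the direct-sum decompositions genuinely topological rather than merely algebraic; this ultimately follows from $\nabla$ being a topological isomorphism together with the continuity of the relevant projections, but it must be tracked carefully at each step.
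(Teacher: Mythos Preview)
Your approach is essentially identical to the paper's: both choose a complement of $A$ (the paper writes $L=A\times B$, you write $L=A\oplus C$), then define the ``$\hat A$-part'' as $\nabla^{-1}$ of the characters vanishing on that complement (the paper's $\nabla^{-1}(B^\perp)$ is exactly your $\nabla^{-1}(\sigma(\hat A))$) and the ``$L_A$-part'' as $\nabla^{-1}(A^\perp)$ intersected with the complement, arriving at the same three-fold splitting. The paper's proof simply asserts that ``it is clear that the first two factors are orthogonal to the third''; your explicit verification of this via the alternating identity $\nabla(a)(k)=-\nabla(k)(a)$ is precisely what the paper suppresses, and your remark that the symplectic hypothesis is genuinely used there is well taken.
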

\begin{proof}
  Suppose that $L=A\times B$.
  Then $\nabla\inv(A^\perp)=A\times (\nabla\inv(A^\perp)\cap B)$.  Note that
  $\hat L$ inherits the decomposition $A^\perp\times B^\perp$, so that $L=A\times \nabla\inv(B^\perp)\times (\nabla\inv(A^\perp)\cap B)$.
  Now $B^\perp$ is isomorphic to $\hat A$.
  Furthermore, it is clear that the first two factors are orthogonal to the third, which is isomorphic to $L_A$ in such a way that the induced self-duality on it is $\nabla_A$.
\end{proof}
\begin{defn*}
  [Universally isotropic group]
  A \lca{} $L$ is said to be universally isotropic if there exists no non-trivial homomorphism $\nabla:L\to \hat L$ such that $\nabla(x)(x)=0$ for all $x\in L$.
\end{defn*}
Universally isotropic subgroups are isotropic for any \lca{} with symplectic self-duality.
\begin{defn*}
  [Topologically locally cyclic group]
  A topological group is said to be topologically locally cyclic if it has a dense locally cyclic subgroup (a group is locally cyclic is every finitely generated subgroup is cyclic).
\end{defn*}
Besides cyclic groups, $\R$, $\Q_p$, $\Z_p$, $\Q_p/\Z_p$, $\Q$, $\Q/\Z$ and $\T$ are some examples of topologically locally cyclic groups.
\begin{lemma}
  \label{lemma:locally-cyclic}
  Topologically locally cyclic \lca s are universally isotropic.
\end{lemma}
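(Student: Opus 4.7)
The plan is to show that the bilinear form $B(x,y) = \nabla(x)(y)$ associated to any alternating homomorphism $\nabla : L \to \hat L$ vanishes identically, from which $\nabla = 0$ follows. First, $B$ is $\Z$-bilinear because $\nabla$ is a homomorphism and each $\nabla(x)$ is a character. Expanding $0 = B(x+y,x+y)$ by bilinearity gives $B(x,y) + B(y,x) = 0$, so $B$ is alternating in the usual sense.

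Next I would show $B$ vanishes on any cyclic subgroup. If $g \in L$ and $x = ag$, $y = bg$ for integers $a,b$, then bilinearity yields $B(x,y) = ab\, B(g,g) = 0$. Consequently, for any locally cyclic subgroup $D \subseteq L$, given $x,y \in D$ the subgroup they generate is cyclic, so $B(x,y) = 0$. Thus $B$ vanishes on $D \times D$.

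To pass from $D$ to the whole of $L$, I would invoke continuity. The form $B : L \times L \to \T$ is the composition
\begin{equation*}
L \times L \xrightarrow{\nabla \times \id{L}} \hat L \times L \longrightarrow \T
\end{equation*}
of the continuous homomorphism $\nabla$ with the evaluation pairing, the latter being continuous by the standard properties of the compact-open topology on the Pontryagin dual of an \lca{}. Hence $B$ is continuous. If $D$ is chosen to be a dense locally cyclic subgroup of $L$, then $D \times D$ is dense in $L \times L$, and since $\T$ is Hausdorff, $B$ must vanish on $L \times L$. This gives $\nabla(x)(y) = 0$ for all $x,y \in L$, so $\nabla = 0$.

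No step is really an obstacle; the only point that requires any care is noting the continuity of the evaluation pairing (a standard fact for LCAs) and that a dense locally cyclic subgroup automatically yields a dense subset of $L \times L$ under the product topology.
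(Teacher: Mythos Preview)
Your proof is correct and follows essentially the same route as the paper: both use that any two elements of the dense locally cyclic subgroup lie in a common cyclic subgroup, so the alternating form vanishes there, and then extend to all of $L$ by continuity of the pairing. Your write-up is slightly more explicit about why the evaluation pairing is continuous and why density in $L\times L$ suffices, but the underlying argument is the same.
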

\begin{proof}
  Let $E$ be the locally cyclic dense subgroup of $L$.
  Given any $x,y\in E$, since the group generated by $x$ and $y$ is cyclic, there exists $z\in E$ such that $x=mz$ and $y=nz$ for some integers $m$ and $n$.
  Consequently, $\nabla(x)(y)=mn\nabla(z)(z)=0$.
  Since $E\times E$ is dense in $L\times L$ and $\nabla$ is continuous, $\nabla\equiv 0$.
\end{proof}
\begin{remark}
  Lemma~\ref{lemma:locally-cyclic} implies that a locally cyclic group can not have a symplectic self-duality even if it is self-dual.
\end{remark}
\begin{prop}
  \label{prop:Krull}
  Let $\{L_i\}$ be a collection of universally isotropic \lca s such that each $L_i$ is either $2$\dash regular or has a universally isotropic Pontryagin dual.
  Let $\mathfrak C$ be a class of locally compact abelian groups such that each $L\in \mathfrak C$ is isomorphic to a unique finite direct sum of the groups $L_i$ or $\widehat{L_i}$, and such that if $L=L'\times L''$ with $L,L'\in \mathfrak C$, then $L''\in \mathfrak C$.
  If $\nabla$ is a symplectic self-duality of $L\in \mathfrak C$ then $(L,\nabla)$ is standard.
\end{prop}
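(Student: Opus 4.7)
My plan is to argue by induction on the number $n$ of summands appearing in the unique $\mathfrak{C}$-decomposition of $L$ as a direct sum of the $L_i$'s and $\widehat{L_i}$'s; the case $n = 0$ is trivial.

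For the inductive step I first exploit the existence of $\nabla$ together with uniqueness of decomposition in $\mathfrak{C}$: since $\hat L$ decomposes as the direct sum of the duals of the summands of $L$ and is isomorphic to $L$ via $\nabla$, the multiset of summands of $L$ is invariant under the operation $L_i \leftrightarrow \widehat{L_i}$. Relabelling the decomposition, I may assume $L = M \oplus \widehat{M} \oplus L''$ with $M$ isomorphic to some $L_i$. Universal isotropy of $M$ then makes the composition $M \hookrightarrow L \xrightarrow{\nabla} \hat L \twoheadrightarrow \widehat{M}$ an alternating homomorphism, hence zero, so $M$ is isotropic in $(L,\nabla)$. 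Being split, $M$ can be peeled off by Lemma~\ref{peeeling-lemma}, giving
\begin{equation*}
  (L,\nabla) \;\cong\; (M \times \widehat{M}, \nabla') \;\oplus\; (L_M, \nabla_M).
\end{equation*}
Comparing this with $L = M \oplus \widehat{M} \oplus L''$ and invoking uniqueness of decomposition, $L_M$ has the decomposition of $L''$, which has $n - 2$ summands; together with the $\mathfrak{C}$-closure axiom this places $L_M$ in $\mathfrak{C}$, so the inductive hypothesis handles $(L_M, \nabla_M)$.

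What remains is to show that $(M \times \widehat{M}, \nabla')$ is itself standard. Writing $\nabla'$ as a matrix with respect to $M \oplus \widehat{M}$, isotropy of $M$ kills the top-left block and the symplectic condition makes the bottom-left block equal $-\widehat{\nabla'_{12}}$. If $L_i \cong M$ is $2$-regular, multiplication by $2$ is an automorphism of $\Hom(\widehat{M}, M)$, so Lemma~\ref{lemma:split-maximal-isotropics} supplies an isotropic complement to $M$ and Lemma~\ref{lemma:polarization-implies-standard} yields standardness. Otherwise $\widehat{L_i}$ is universally isotropic; the same alternating-map argument applied to the block $\nabla'_{22} \colon \widehat{M} \to \widehat{\widehat{M}} = M$ forces $\nabla'_{22} = 0$, so $\widehat{M}$ is itself isotropic and Lemma~\ref{lemma:polarization-implies-standard} applies directly. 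The orthogonal sum of standard pairs being standard closes the induction.

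I expect the main obstacle to be the analysis of the peeled piece $(M \times \widehat{M}, \nabla')$: it is precisely here that the $2$-regular-or-dual-universally-isotropic hypothesis enters, forcing the two-case treatment above. The remainder of the argument is essentially bookkeeping with the $\mathfrak{C}$-decomposition and the peeling lemma.
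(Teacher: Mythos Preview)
Your proof is correct and follows the paper's inductive peeling strategy essentially verbatim. Your explicit two-case analysis of $(M\times\widehat M,\nabla')$—invoking Lemma~\ref{lemma:split-maximal-isotropics} when $L_i$ is $2$-regular, and observing that universal isotropy of $\widehat{L_i}$ forces $\nabla'_{22}=0$ so that Lemma~\ref{lemma:polarization-implies-standard} applies directly—in fact spells out a step the paper compresses into a bare citation of those two lemmas.
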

\begin{proof}
  For $L\in \mathfrak C$, define the rank of $L$ to be the number of summands in the decomposition of $L$ into groups of the form $L_i$ or $\widehat{L_i}$.
  Proposition~\ref{prop:Krull} is proved by induction on the rank of $L$.
  If $L$ has rank $0$, there is nothing to prove.
  Otherwise, the self-duality of $L$ and the uniqueness of its decomposition ensure that $L$ has a summand of the form $L_i$.
  Now apply the peeling lemma (Lemma~\ref{peeeling-lemma}) to $L$ with $A=L_i$.
  Lemmas~\ref{lemma:polarization-implies-standard} and~~\ref{lemma:split-maximal-isotropics} imply that the $A\times \hat A$ factor is standard.
  $L_A$ is also in $\mathfrak C$ and has rank two less than that of $L$, and is therefore standard by the induction hypothesis.
\end{proof}
\begin{cor}
  \label{cor:fdvs}
  Let $F$ be one of the topological fields $\R$, $\Q_p$ or $\Z/p\Z$ ($p$ any prime number).
  Let $\nabla$ be a symplectic self-duality of the additive group $L$ of a finite dimensional vector space over $F$.
  Then $(L,\nabla)$ is standard.
\end{cor}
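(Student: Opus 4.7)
The strategy is to apply Proposition~\ref{prop:Krull} with the collection $\{L_i\}$ consisting of the single group $F$ and with $\mathfrak{C}$ the class of topological abelian groups isomorphic to $F^n$ for some $n\geq 0$. Since every such group decomposes uniquely (up to reordering of factors) as a finite direct sum of copies of $F$, and since $\widehat{F}\cong F$ in each of the three cases ($\widehat{\R}\cong \R$, $\widehat{\Q_p}\cong \Q_p$, $\widehat{\Z/p\Z}\cong \Z/p\Z$), the hypotheses on the decomposition and on $\widehat{L_i}$ reduce to conditions on $F$ alone.

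Next I would verify that $F$ is universally isotropic. Each of the three fields contains a dense locally cyclic additive subgroup: $\Z/p\Z$ is itself cyclic; $\R$ contains $\Q$, which is dense and locally cyclic since every finitely generated subgroup of $\Q$ is of the form $\tfrac{1}{N}\Z$; and $\Q_p$ also contains $\Q$ as a dense locally cyclic subgroup. Hence $F$ is topologically locally cyclic, and by Lemma~\ref{lemma:locally-cyclic} it is universally isotropic. For the 2-regularity/dual clause, note that $2$ is invertible in each of $\R$, $\Q_p$, and $\Z/p\Z$ when $p$ is odd, so multiplication by $2$ is a topological automorphism of $F$ in all cases except $F=\Z/2\Z$; in that one exceptional case $\widehat{F}\cong\Z/2\Z$ is cyclic, hence itself universally isotropic, so the alternative clause of the hypothesis applies.

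It remains to check that $\mathfrak{C}$ is closed under complementation: if $L=L'\times L''$ with $L,L'\in \mathfrak{C}$, then $L''\cong L/L'$ as topological groups, and the quotient of $F^n$ by a closed subgroup isomorphic to $F^k$ is isomorphic to $F^{n-k}$. One small point to make along the way is that a topological direct summand of $F^n$ that is itself in $\mathfrak{C}$ is automatically an $F$-linear subspace, because the continuous endomorphisms of $F$ are precisely multiplication by scalars in $F$ (immediate for $F=\Z/p\Z$; for $F=\R$ this follows from $\Q$-linearity plus density; for $F=\Q_p$ this is the standard identification $\End_{\mathrm{cts}}(\Q_p)\cong \Q_p$).

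With all the hypotheses in place, Proposition~\ref{prop:Krull} directly yields that $(L,\nabla)$ is standard. The entire argument is a routine verification; the real work has already been done in the peeling lemma and in Proposition~\ref{prop:Krull}, so no step here presents a genuine obstacle — the only point requiring a line of justification is the scalar-multiplication description of continuous endomorphisms of $F$ used to identify summands with $F$-subspaces.
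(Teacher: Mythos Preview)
Your proposal is correct and follows essentially the same approach as the paper: both apply Proposition~\ref{prop:Krull} with the single building block $F$, using that $F$ is topologically locally cyclic (hence universally isotropic by Lemma~\ref{lemma:locally-cyclic}). The paper's proof is a single sentence that leaves the verification of the hypotheses implicit, whereas you spell out the $2$\dash regularity check (including the $F=\Z/2\Z$ case via the alternative clause) and the closure of $\mathfrak{C}$ under complements via $\End_{\mathrm{cts}}(F)\cong F$; these extra details are accurate and do no harm.
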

\begin{proof}
  As $L$ is isomorphic to a unique finite direct sum of one-dimensional subspaces which are topologically locally cyclic, Proposition~\ref{prop:Krull} applies.
\end{proof}
\begin{defn*}
  [Topological torsion group, topological $p$\dash group]
  A \lca{} $G$ is said to be a topological torsion group (topological $p$\dash group) if it has a compact open subgroup $G$ such that $\hat G$ and $L/G$ are torsion groups (respectively, $p$\dash torsion groups, i.e., every element is annihilated by some power of $p$).
\end{defn*}
\begin{defn*}
  [Finite $p$\dash rank]
  \label{defn:finite-p-rank}
  A topological $p$\dash group $L$ is said to have finite $p$\dash rank if it has a compact open subgroup $G$ such that $\hat G$ and $L/G$ have finite $p$\dash rank (an abelian group is said to have finite $p$\dash rank if its subgroup of elements annihilated by $p$ is finite).
\end{defn*}
\begin{cor}
  \label{cor:finite-p-rank}
  If $\nabla$ is a symplectic self-duality of $L$, which is a direct sum over a finite number of primes $p$ of groups of finite $p$\dash rank then $(L,\nabla)$ is standard.
\end{cor}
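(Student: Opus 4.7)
My plan is to reduce standardness of $(L,\nabla)$ to the single-prime case and then apply Proposition~\ref{prop:Krull} with the class $\mathfrak{C}$ generated by the elementary topologically locally cyclic $p$-groups.

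First I would decouple the primes. Writing $L = \bigoplus_{p \in S} L_p$ for a finite set $S$ of primes, with $L_p$ a topological $p$-group of finite $p$-rank, the orthogonal decomposition of $(L,\nabla)$ into primary pieces follows once one shows $\Hom_{\mathrm{cts}}(L_p, \widehat{L_q}) = 0$ for $p \neq q$. The image in $\widehat{L_q}$ of the compact open pro-$p$ subgroup $G_p \subset L_p$ is again pro-$p$; its intersection with the compact open pro-$q$ subgroup of $\widehat{L_q}$ is simultaneously pro-$p$ and pro-$q$, hence trivial, and its projection to the discrete $q$-torsion quotient is finite and both $p$- and $q$-torsion, hence also trivial. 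The induced map out of $L_p/G_p$ then takes $p$-torsion values in the topological $q$-group $\widehat{L_q}$, which forces it to vanish by the same compact-open dichotomy (a $p$-torsion element has trivial image in the $q$-torsion quotient, hence lies in the pro-$q$ compact open subgroup, which has no nontrivial $p$-torsion). This produces $\nabla = \bigoplus_p \nabla_p$, reducing the problem to each $(L_p,\nabla_p)$ separately.

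Next I would invoke the structure theorem for topological $p$-groups of finite $p$-rank: such a group is isomorphic to a finite direct sum of copies drawn from the list $\Z/p^k\Z$, $\Z_p$, $\Q_p/\Z_p$, $\Q_p$. Indeed the compact open $G \subset L_p$ is pro-$p$ with $\widehat{G}$ discrete $p$-primary of finite $p$-socle; by the classical classification $\widehat{G}$, and likewise the discrete quotient $L_p/G$, is a finite sum of cyclic $p$-groups and Prüfer groups, so $G$ is a finite product of $\Z/p^k\Z$'s and $\Z_p$'s. The only nonsplit extension among these pieces that is forced by the topology assembles $\Z_p$ and $\Q_p/\Z_p$ into $\Q_p$, yielding the desired decomposition. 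I would treat this as a known structural input (see, e.g., Armacost) rather than reprove it inline; this is the main obstacle in the argument.

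Each of the four elementary factors is topologically locally cyclic, with dense locally cyclic subgroups being the group itself, $\Z$, $\Z[1/p]/\Z$, and $\Z[1/p]$ respectively, hence universally isotropic by Lemma~\ref{lemma:locally-cyclic}. Their Pontryagin duals lie in the same list and are thus also universally isotropic, so the hypothesis of Proposition~\ref{prop:Krull} holds even at $p=2$, where $2$-regularity fails. Taking $\{L_i\}$ to be the union over $p \in S$ and $k \geq 1$ of these elementary groups and $\mathfrak{C}$ the class of their finite direct sums, uniqueness of the decomposition and closure under complements follow from Krull--Schmidt applied to these pairwise non-isomorphic indecomposable groups. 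Proposition~\ref{prop:Krull} then delivers standardness of each $(L_p,\nabla_p)$, and the orthogonal sum of standard pairs is standard, completing the argument.
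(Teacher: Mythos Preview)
Your approach is essentially the same as the paper's: both invoke the structure theorem expressing $L$ as a unique finite direct sum of the topologically locally cyclic groups $\Q_p$, $\Z_p$, $\Q_p/\Z_p$, and $\Z/p^k\Z$ (over the finitely many primes involved), then apply Proposition~\ref{prop:Krull}. Your preliminary primary decomposition step is correct but unnecessary, since Proposition~\ref{prop:Krull} already handles the mixed-prime case directly once uniqueness of the full decomposition is known.
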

\begin{proof}
  Our $L$ is isomorphic to a unique finite direct sum of the topologically locally cyclic groups $\Q_p$, $\Z_p$, $\Q_p/\Z_p$ and finite cyclic $p$\dash groups, where $p$ ranges over the finite set of prime numbers above (see \cite[Lemma~2.8 and Proposition~2.9]{MR2329311} and \cite[Theorem~13]{MR0035776}). Therefore Proposition~\ref{prop:Krull} applies.
\end{proof}
\begin{cor}
  \label{cor:finite}
  If $\nabla$ is a symplectic self-duality of a finite abelian group $L$, then $(L,\nabla)$ is standard.
\end{cor}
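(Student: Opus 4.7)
The plan is to derive this corollary by direct appeal to Proposition~\ref{prop:Krull}; alternatively, since a finite abelian group is trivially a topological torsion group that splits as a finite direct sum of its $p$\dash primary components (each of finite $p$\dash rank, indeed of finite order), the statement is already a special case of Corollary~\ref{cor:finite-p-rank}. I would write the argument via Proposition~\ref{prop:Krull} because the verification of its hypotheses in the finite case is especially transparent.

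Concretely, I would take $\{L_i\}$ to be the family of finite cyclic $p$\dash groups $\Z/p^k\Z$ as $p$ ranges over primes and $k$ over positive integers, and let $\mathfrak{C}$ be the class of finite abelian groups. Each $L_i = \Z/p^k\Z$ is cyclic, hence topologically locally cyclic, so by Lemma~\ref{lemma:locally-cyclic} it is universally isotropic. Moreover, $\widehat{\Z/p^k\Z}$ is (non-canonically) isomorphic to $\Z/p^k\Z$, so its Pontryagin dual is likewise universally isotropic; this handles the condition on the $L_i$ in Proposition~\ref{prop:Krull} regardless of the prime $p$, so no separate treatment of $p=2$ is needed.

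Next I would invoke the fundamental theorem of finite abelian groups, which guarantees that every $L \in \mathfrak{C}$ decomposes uniquely (up to reordering) as a finite direct sum of groups of the form $\Z/p^k\Z$, and observe that this exactly matches the decomposition requirement of Proposition~\ref{prop:Krull} (using also $\widehat{\Z/p^k\Z} \cong \Z/p^k\Z$, so one need not distinguish $L_i$ from $\widehat{L_i}$). Closure of $\mathfrak{C}$ under taking a direct complement is immediate: if $L = L' \times L''$ with $L, L'$ finite, then $L''$ is finite. With all hypotheses verified, Proposition~\ref{prop:Krull} produces the standardness of $(L,\nabla)$, completing the argument. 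There is no serious obstacle — the only mild subtlety is noticing that uniqueness of the primary-cyclic decomposition is precisely the uniqueness hypothesis demanded by Proposition~\ref{prop:Krull}.
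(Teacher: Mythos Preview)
Your proposal is correct and matches the paper's approach: the paper's proof is the one-line observation that this is a special case of Corollary~\ref{cor:finite-p-rank}, which you explicitly note. Your longer direct verification via Proposition~\ref{prop:Krull} simply unpacks the proof of Corollary~\ref{cor:finite-p-rank} (which itself invokes Proposition~\ref{prop:Krull}) in the finite setting, so the two routes coincide in substance.
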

\begin{proof}
  This is a special case of Corollary~\ref{cor:finite-p-rank}.
\end{proof}
\section{Reduction to residual groups}
\label{sec:red-open-compact}
The Pontryagin-van Kampen theorem states that every locally compact abelian group can be written as $\R^n\times E$ for a unique non-negative integer $n$ and some \lca{} $E$ which has a compact open subgroup.
$E$ is completely determined by $L$ up to isomorphism \cite[Corollary~1]{MR0578649}.
\begin{defn*}
  [Residual group]
  For a \lca{} $L$, let $B(L)$ denote the subgroup of compact elements (elements which are contained in a compact subgroup of $L$), and let $D(L)$ denote the maximal divisible subgroup of $L$.
  $L$ is said to be residual if $D(L)\subset B(L)$ and $D(\hat L)\subset B(\hat L)$.
\end{defn*}
A refinement of the Pontryagin-van Kampen theorem is Robertson's structure theorem \cite[Theorem~2]{MR0578649}:
\begin{theorem}
  \label{theorem:Robertson}
  Every \lca{} $L$ can be written as
  \begin{equation}
    \label{eq:1}
    L\cong\R^n\times \Q^{\oplus \lambda}\times \hat \Q^\mu \times E
  \end{equation}
  where $n$ is a non-negative integer, $\lambda$ and $\mu$ are cardinal numbers, and $E$ is a residual \lca{}.  The group
  $L$ determines $n$, $\lambda$, $\mu$ and the isomorphism class of $E$ uniquely.
\end{theorem}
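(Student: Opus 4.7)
The plan is to obtain the decomposition by successively peeling off the three non-residual factors $\R^n$, $\Q^{\oplus\lambda}$ and $\hat\Q^\mu$ from $L$. First, apply the Pontryagin--van Kampen theorem (already stated at the start of the section) to write $L \cong \R^n \times E_0$, where $E_0$ admits a compact open subgroup $K$. The integer $n$ and the isomorphism class of $E_0$ are already determined uniquely by that theorem, so what remains is to decompose $E_0$ in the claimed way.

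Next, one isolates a discrete torsion-free divisible summand of $E_0$. The quotient $E_0/B(E_0)$ is a discrete torsion-free abelian group: an element $x\in E_0$ lies in $B(E_0)$ precisely when some positive integer multiple of $x$ lies in $K$, so $E_0/B(E_0)$ is the torsion-free quotient of the discrete group $E_0/K$. Its maximal divisible subgroup is a $\Q$-vector space $\Q^{(\lambda)}$, and one constructs a closed subgroup of $E_0$ mapping isomorphically and homeomorphically onto it. Because this subgroup is discrete it becomes a topological direct summand, giving $E_0 \cong \Q^{\oplus\lambda} \times E_1$ with $E_1$ still admitting a compact open subgroup. Applying the same construction on the dual side to $\hat E_1$ produces $\hat E_1 \cong \Q^{\oplus\mu} \times \hat E$ with $\hat E$ residual, and dualising yields $E_1 \cong \hat\Q^\mu \times E$. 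Assembly of the three steps gives the stated decomposition.

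For uniqueness, $\lambda$ is intrinsically the $\Q$-dimension of the maximal divisible torsion-free quotient of $L/B(L)$ (once the contribution of $\R^n$ is stripped), $\mu$ is the analogous invariant for $\hat L$, and $E$ is then determined up to isomorphism by iterated application of the uniqueness in Pontryagin--van Kampen. The main obstacle is producing the section $\Q^{\oplus\lambda} \hookrightarrow E_0$ in the second paragraph: although divisible groups are algebraically injective in the category of abelian groups, the pullback extension $0 \to B(E_0)\cap H \to H \to \Q^{(\lambda)} \to 0$ (with $H$ the preimage of $\Q^{(\lambda)}$ in $E_0$) need not split over $\mathbf{Ab}$, so the section must be built by combining the divisibility of $\Q^{(\lambda)}$ with the structure of $B(E_0)$ and then verifying that the resulting algebraic lift is closed and topologically complemented. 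The dual construction for $\hat\Q^\mu$ confronts the same difficulty in its Pontryagin-dual formulation, and the detailed verification of these topological splittings together with the cancellation needed for uniqueness is the substance of Robertson's argument in \cite{MR0578649}.
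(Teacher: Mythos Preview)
The paper does not give its own proof of this theorem: it is quoted verbatim as Robertson's structure theorem with the citation \cite[Theorem~2]{MR0578649}, and is used thereafter as a black box. So there is no argument in the paper to compare your proposal against. Your outline is a plausible reconstruction of how such a decomposition is obtained, and you correctly identify the two genuinely nontrivial points (producing a discrete lift of the $\Q^{\oplus\lambda}$\nobreakdash-quotient inside $E_0$, and the cancellation needed for uniqueness) and correctly attribute their resolution to Robertson's paper. In that sense your write-up is consistent with the paper's treatment: state the result, point to \cite{MR0578649} for the proof.

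One small correction to your sketch: you write that the extension $0\to B(E_0)\cap H\to H\to\Q^{(\lambda)}\to 0$ ``need not split over $\mathbf{Ab}$, so the section must be built by combining the divisibility of $\Q^{(\lambda)}$\dots''. Divisibility of the \emph{quotient} is of no help in splitting an extension; divisibility is an injectivity property, relevant when the divisible group sits as the kernel. The actual mechanism in Robertson's argument is different (one works with the structure of $B(E_0)$ and the fact that the quotient is torsion-free and discrete), so if you intend to expand this into a self-contained proof you should not lean on divisibility of $\Q^{(\lambda)}$ at that step.
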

It is clear from the above result that $L$ is self-dual if and only if $\lambda=\mu$ and $E$ is self-dual.
\begin{theorem}
  \label{theorem:reduction-to-residual}
  Let $L$ be a \lca{} with decomposition as in Theorem~\ref{theorem:Robertson}.
  Every symplectic self-duality $\nabla$ of $L$ gives rise to a symplectic self-duality $\nabla'$ of $E$.  The pair
  $(L,\nabla)$ is  standard  if $(E,\nabla')$ is a standard pair.
\end{theorem}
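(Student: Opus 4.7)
The plan is to express $(L,\nabla)$ as the orthogonal direct sum of three symplectic pairs---one on $V:=\R^n$, one on $W:=\Q^{\oplus\lambda}\times\hat\Q^\mu$, and one on $E$---and then to invoke the fact (noted at the start of Section~\ref{sec:general-lemmas}) that orthogonal direct sums of standard pairs are standard, so that standardness of $(E,\nabla')$ will propagate to $(L,\nabla)$.

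For the first step, peel off $V$. The maximal real-vector subgroup of an \lca{} is a topological invariant (by the uniqueness statement in the Pontryagin--van Kampen decomposition), so $V\subset L$ and the analogous $V\subset\hat L$ are each characteristic. Since $\nabla$ is a topological isomorphism it must carry one to the other, i.e.\ $\nabla(V)=V$ as subgroups of $\hat L$. Writing $\nabla$ as a block matrix with respect to $L=V\oplus L_1$ and $\hat L=V\oplus\hat L_1$, where $L_1=\Q^{\oplus\lambda}\times\hat\Q^\mu\times E$, the preservation $\nabla(V)\subset V$ kills the $(L_1,V)$ block and the alternating condition $\hat\nabla=-\nabla$ kills the $(V,L_1)$ block as well. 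Hence $\nabla$ is block diagonal and yields an orthogonal splitting $(L,\nabla)\cong(V,\nabla_V)\oplus(L_1,\nabla_{L_1})$ of symplectic pairs, with $(V,\nabla_V)$ standard by Corollary~\ref{cor:fdvs}.

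For the second step, perform the analogous decomposition of $(L_1,\nabla_{L_1})$ as $(W,\nabla_W)\oplus(E,\nabla')$. Unlike $V$, the subgroup $W\subset L_1$ is not itself a topological invariant, but the residuality of $E$ (the conditions $D(E)\subset B(E)$ and $D(\hat E)\subset B(\hat E)$) pins it down canonically: $\Q^{\oplus\lambda}$ is the maximal divisible subgroup of the characteristic quotient $L_1/B(L_1)$, and $\hat\Q^\mu$ is dually identified inside $B(L_1)$ via Pontryagin duality. These data determine the position of $W$ in $L_1$ up to an automorphism of the form $\mat{\id W}X0{\id E}$ with $X\colon E\to W$, and a suitable choice of $X$, in the style of Lemma~\ref{lemma:split-maximal-isotropics}, kills the off-diagonal symplectic blocks between $W$ and $E$. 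Inside the resulting $(W,\nabla_W)$, the factor $\hat\Q^\mu$ is automatically isotropic because the corresponding block $\hat\Q^\mu\to\Q^{\oplus\mu}$ of $\nabla_W$ is a continuous homomorphism from a compact connected group to a discrete torsion-free one, hence zero; it is then maximal isotropic by non-degeneracy. Applying Lemma~\ref{lemma:split-maximal-isotropics} (valid since multiplication by $2$ is surjective on $\Hom(\Q^{\oplus\lambda},\hat\Q^\lambda)$ by divisibility of $\hat\Q$) produces an isotropic complement, and Lemma~\ref{lemma:polarization-implies-standard} then shows that $(W,\nabla_W)$ is standard.

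The main obstacle is the second step: separating $W$ canonically from $E$ is genuinely subtler than separating $V$ from $L_1$, because $E$ may contain compact or divisible pieces that could in principle mix with the $\Q$ and $\hat\Q$ factors. The residuality hypothesis on $E$---equivalent to forbidding $\R$, $\Q$, and $\hat\Q$ summands inside $E$ itself---is exactly what prevents this mixing and permits the canonical identification of $W$ above. Once both orthogonal decompositions are in hand, combining them gives $(L,\nabla)\cong(V,\nabla_V)\oplus(W,\nabla_W)\oplus(E,\nabla')$, and standardness of $(L,\nabla)$ follows from standardness of $(E,\nabla')$ together with Corollary~\ref{cor:fdvs} and Lemma~\ref{lemma:polarization-implies-standard}.
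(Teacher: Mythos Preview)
Your first step contains a genuine error: the subgroup $V=\R^n$ is \emph{not} characteristic in $L$. For instance, in $\R\times\T$ the map $(x,y)\mapsto(x,y+x\bmod 1)$ is a topological automorphism carrying $\R\times\{0\}$ to a different copy of $\R$. The uniqueness in the Pontryagin--van Kampen decomposition tells you only that $n$ and the isomorphism class of the cofactor are determined, not that the embedded $\R^n$ is fixed by automorphisms. Consequently your claim that $\nabla(V)=V$ is unjustified, and in fact the off-diagonal block $\nabla_{21}\colon V\to\hat L_1$ can be nonzero: already $\Hom(\R,\hat\Q)\cong\Hom(\Q,\R)\cong\R$ shows this when $\lambda>0$, and maps $\R\to(\hat E)_0$ into the compact connected part of $\hat E$ can occur as well. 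So $\nabla$ is not block-diagonal with respect to the given splitting $V\oplus L_1$, and the first step collapses.

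The second step inherits the same kind of problem and is in any case too sketchy: the phrase ``in the style of Lemma~\ref{lemma:split-maximal-isotropics}'' hides a genuine computation. To kill the off-diagonal block $B\colon E\to\hat W$ via an automorphism $\mat{\id W}X0{\id E}$ one must solve $AX=-B$ with $A=\nabla_W\colon W\to\hat W$, and there is no reason for $A$ to be invertible (or for $B$ to lie in its image) before one knows the decomposition is orthogonal.

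The paper avoids both difficulties by using the peeling lemma (Lemma~\ref{peeeling-lemma}) directly rather than trying to produce a block-diagonalization up front. Each copy of $\R$ is universally isotropic (Lemma~\ref{lemma:locally-cyclic}), so one peels off $\R\times\hat\R\cong\R^2$ factors by induction on $n$ until $n=0$. Then $\hat\Q^\mu$ is isotropic because $\Hom(\hat\Q^\mu,\Q^{\oplus\mu})=0$ (compact connected to discrete torsion-free), and a single application of the peeling lemma splits off $(\Q^{\oplus\mu}\times\hat\Q^\mu,\nabla')$, which is standard by Lemmas~\ref{lemma:polarization-implies-standard} and~\ref{lemma:split-maximal-isotropics}. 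The orthogonal complement has no $\hat\Q$ part, hence by self-duality no $\Q$ part either, so it is residual; by the uniqueness in Robertson's theorem it is isomorphic to $E$, and this is where $\nabla'$ lives.
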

\begin{proof}
  If $\nabla$ is a symplectic self-duality of $L$, using the peeling lemma and induction on $n$, one may reduce to the case where $n=0$.
  The factor $\hat \Q^\mu$ is connected and compact, with discrete torsion-free Pontryagin dual.
  Therefore, there are no non-trivial homomorphisms from $\hat \Q^\mu$ to its Pontryagin dual.
  Therefore $\hat \Q^\mu$ is an isotropic factor.
  By the peeling lemma, $(L,\nabla)$ is the orthogonal sum of a symplectic self-duality on $\Q^{\oplus \mu}\times \hat \Q^\mu$ for which $\hat \Q^\mu$ is isotropic (which is standard by Lemmas~\ref{lemma:polarization-implies-standard} and~\ref{lemma:split-maximal-isotropics}) and a self-dual group with no $\hat \Q^\mu$\dash part, and hence, by self-duality also no $\Q^\lambda$\dash part, namely a residual group.
  This group is isomorphic to $E$ by the uniqueness part of Robertson's structure theorem (Theorem~\ref{theorem:Robertson}).
  Thus $(L,\nabla)$ is an orthogonal sum of a standard pair and $(E,\nabla')$, where $\nabla'$ is a symplectic self-duality of $E$.
  If $(E,\nabla')$ is standard, then $(L,\nabla)$, being an orthogonal sum of standard pairs, is a standard pair.
\end{proof}
\begin{remark}
  \label{remark:residual}
  In view of Theorem~\ref{theorem:reduction-to-residual}, Theorem~\ref{theorem:exponent-p} and Corollaries~\ref{cor:fdvs}--\ref{cor:finite} continue to hold when the hypotheses are applied only to the residual part of $L$.
\end{remark}

\section{Duality of extensions}
\label{sec:antidual}
Let $G$ be a compact abelian group and suppose that $L$ is an extension over the discrete group $\hat G$ with kernel $G$:
\begin{equation*}
  \label{eq:5}
  \xses {\xi}GL{\hat G}fq.
\end{equation*}
There is exactly one topology on $L$ under which $f$ and $q$ are continuous, the one where a system of open neighborhoods of $0$ is generated by the open neighborhoods of $0$ in $G$.
Thus, the algebraic structure of the extension determines it topologically.
Equivalence classes of such extensions are in bijective correspondence with elements of the Baer group $E(\hat G,G)$ (see \cite[XIV.1]{homologicalalgebra}).
Applying Pontryagin duality to \ref{eq:5} gives another extension of the same type
\begin{equation}
  \label{eq:2}
  \xses {\hat \xi}GL{\hat G}{\hat q}{\hat f}.
\end{equation}
Thus, Pontryagin duality induces an involution $\xi\mapsto \hat \xi$ of $E(\hat G,G)$.
\begin{defn*}
  [{\cite[Definition~3.3]{MR1475126}}]
  The extension \ref{eq:5} is said to be autodual (antidual) if $\hat \xi=\xi$ (respectively, $\hat \xi =-\xi$).
\end{defn*}
\begin{remark}
  \label{remark:anti-dual-extensions}
  If \ref{eq:5} is an autodual or antidual extension, then there exists an isomorphism $\nabla:L\to \hat L$ such that
  \begin{equation}
    \label{eq:3}
    \xymatrix{
      (\xi) & 0 \ar[r] & G \ar@{=}[d] \ar[r]^f & L \ar[d]^\nabla \ar[r]^q & \hat G \ar[d]^\epsilon \ar[r] & 0\\
      (\hat \xi) & 0 \ar[r] & G \ar[r]_{\hat q} & \hat L \ar[r]_{\hat f} & \hat G \ar[r] & 0
    }
  \end{equation}
  is a commutative diagram, with $\epsilon=\id{\hat G}$ in the autodual case, and $\epsilon=-\id{\hat G}$ in the antidual case.
\end{remark}
\begin{lemma}
  \label{lemmma:max-iso}
  Let $\nabla$ be a symplectic self-duality of a \lca{} $L$ and suppose that $G$ is a compact open subgroup of $L$ for which $\nabla\inv(G^\perp)=G$, i.e., $G$ is maximal isotropic.
  When $f:G\to L$ denotes the inclusion map and $q:L\to \hat G$ is defined by $q(x)(g)=\nabla(f(g))(x)$ for all $x\in L$ and $g\in G$, (\ref{eq:3}) has exact rows and commutes (with $\epsilon=-\id{\hat G}$).
\end{lemma}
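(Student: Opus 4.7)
The plan is to unpack the definition of $q$, relate it to $\nabla$ via the symplectic property, and then read off exactness and commutativity from standard Pontryagin duality together with the maximal isotropy hypothesis.

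First I would observe that for $x\in L$ and $g\in G$ the symplectic identity $\nabla(g)(x)=-\nabla(x)(g)$ lets us rewrite $q(x)=-\nabla(x)|_G$. In other words, $q=-\hat f\circ\nabla$, where $\hat f:\hat L\to\hat G$ is the restriction map dual to the inclusion $f:G\hookrightarrow L$. This single identity essentially carries the whole proof: it makes $q$ continuous, it immediately gives the commutativity of the right-hand square of (\ref{eq:3}) with $\epsilon=-\id{\hat G}$, and it reduces the analysis of $q$ to the restriction map.

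Next I would verify the exactness of the row $0\to G\xrightarrow{f}L\xrightarrow{q}\hat G\to 0$. Injectivity of $f$ is automatic. Since $G$ is a closed subgroup of $L$, Pontryagin duality implies that $\hat f:\hat L\to\hat G$ is surjective (every character of $G$ extends to $L$), so $q=-\hat f\circ\nabla$ is surjective because $\nabla$ is an isomorphism. For the kernel, $\ker q=\nabla\inv(\ker\hat f)=\nabla\inv(G^\perp)$, which equals $G$ by the maximal isotropy hypothesis; this is exactly the image of $f$.

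For the left-hand square of (\ref{eq:3}), identify $\hat{\hat G}$ with $G$ via evaluation. I would compute, for $g\in G$ and $x\in L$, that $\hat q(g)(x)=q(x)(g)=\nabla(g)(x)=\nabla(f(g))(x)$, which gives $\hat q=\nabla\circ f$ as required. Combined with the previous paragraph this completes the proof. There is no real obstacle here; the only thing one must be careful about is tracking the sign introduced by the symplectic property, which is precisely what forces $\epsilon=-\id{\hat G}$ rather than $\epsilon=\id{\hat G}$, so that we land in the antidual rather than the autodual case.
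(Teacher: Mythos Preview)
Your proof is correct and follows essentially the same approach as the paper: both arguments identify $\ker q$ with $\nabla\inv(G^\perp)=G$, obtain surjectivity of $q$ from the extendability of characters together with $\nabla$ being an isomorphism, and verify the two squares of (\ref{eq:3}) by the direct computations $\hat q(g)(x)=q(x)(g)=\nabla(f(g))(x)$ and $\hat f(\nabla(x))(g)=\nabla(x)(f(g))=-q(x)(g)$. Your only organizational difference is that you package the second of these as the identity $q=-\hat f\circ\nabla$ at the outset, which is a tidy way to handle continuity, surjectivity, the kernel, and the right-hand square all at once.
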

\begin{proof}
  Since $G=\nabla\inv(G^\perp)$, the kernel of $q$ is $G$.
  Since any character of $G$ can be extended to $L$, and $\nabla:L\to \hat L$ is an isomorphism, $q$ is surjective.
  It remains to check that (\ref{eq:3}) commutes (with $\epsilon=-1$).
  Indeed, for $g\in G$ and $x\in L$,
  \begin{eqnarray*}
    \hat q(g)(x) & = & q(x)(g)\\
    & = & \nabla(f(g))(x),
  \end{eqnarray*}
  showing that the box on the left commutes.
  Also, for $x\in L$ and $g\in G$,
  \begin{eqnarray*}
    \hat f (\nabla(x))(g) & = & \nabla(x)(f(g))\\
    & = & -\nabla(f(g))(x)\\
    & = & -q(x)(g),
  \end{eqnarray*}
  showing that the box on the right commutes.
\end{proof}
\begin{lemma}
  \label{lemma:compact-open-maximal-isotropic}
  Let $\nabla$ be a symplectic self-duality of a \lca{} $L$.
  If $L$ has a compact open subgroup, then $L$ has a compact open maximal isotropic subgroup.
\end{lemma}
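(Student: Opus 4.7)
The plan is to first produce a compact open \emph{isotropic} subgroup of $L$, and then enlarge it to a maximal one by descending to a finite subquotient via Lemma~\ref{lemma:subquotient}.

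Start from any compact open subgroup $K$ of $L$ and set $N:=K\cap \nabla\inv(K^\perp)$. For $x,y\in N$ one has $\nabla(y)\in K^\perp$ and $x\in K$, hence $\nabla(y)(x)=0$; so $N$ is isotropic. It is compact as a closed subgroup of the compact group $K$. The substantive point is to check that $N$ is \emph{open}. The composition
\begin{equation*}
  K\xrightarrow{\ \nabla|_K\ } \hat L \xrightarrow{\ \mathrm{res}\ } \hat K,
\end{equation*}
in which $\mathrm{res}$ is restriction of characters (whose kernel in $\hat L$ is $K^\perp$), is a continuous homomorphism whose kernel is exactly $N$. Since $K$ is compact, $\hat K$ is discrete, and so the kernel of any continuous homomorphism into $\hat K$ is open; thus $N$ is open in $K$ and hence in $L$.

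Now apply Lemma~\ref{lemma:subquotient} with the closed isotropic subgroup $N$ to obtain $L_N=\nabla\inv(N^\perp)/N$ with its induced symplectic self-duality $\nabla_N$. Because $N$ is open, $L/N$ is discrete, so $N^\perp\cong \widehat{L/N}$ is compact and $\nabla\inv(N^\perp)$ is compact; meanwhile $N$ is open inside $\nabla\inv(N^\perp)$. Hence $L_N$ is simultaneously compact and discrete, i.e.\ finite, and one can pick a maximal isotropic subgroup $M_0$ of $L_N$ by finite induction.

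Let $M$ be the pre-image of $M_0$ in $\nabla\inv(N^\perp)$. The chain $N\subseteq M\subseteq \nabla\inv(N^\perp)$ makes $M$ compact and open in $L$. For maximality, any isotropic $M'\supseteq M$ contains $N$, so $\nabla(M')\subseteq N^\perp$, i.e.\ $M'\subseteq \nabla\inv(N^\perp)$; then $M'/N$ is isotropic in $L_N$ and contains $M_0$, so $M'/N=M_0$, whence $M'=M$. The only obstacle worth naming is the openness of $N$, and it is overcome by the discreteness of $\hat K$; everything else is a routine pull-back along Lemma~\ref{lemma:subquotient}.
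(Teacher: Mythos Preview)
Your proof is correct. The overall strategy matches the paper's---produce a compact open isotropic subgroup whose ``defect'' $\nabla^{-1}(\,\cdot\,^\perp)/(\,\cdot\,)$ is finite, then enlarge it to a maximal isotropic one---but the execution differs in two respects. First, the paper invokes \cite[Proposition~4.6]{MR1651168} (Fuchs--Hofmann) to obtain a compact open isotropic $G$ with $\nabla^{-1}(G^\perp)/G$ finite, whereas you construct $N=K\cap\nabla^{-1}(K^\perp)$ directly and verify its openness via the discreteness of $\hat K$; your argument is thus self-contained. Second, the paper enlarges the isotropic subgroup one element at a time, using $\nabla(x)(x)=0$ at each step, while you pass to the finite subquotient $L_N$ via Lemma~\ref{lemma:subquotient} and pull back a maximal isotropic subgroup all at once. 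Your route avoids the external citation and exploits machinery already set up in the paper; the paper's route is marginally more elementary in that it does not appeal to Lemma~\ref{lemma:subquotient}. One small point you leave implicit is that the preimage $M$ is itself isotropic; this is immediate from $\nabla_N(\bar x)(\bar y)=\nabla(x)(y)$ and could be stated in a line.
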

\begin{proof}
  According to \cite[Proposition~4.6]{MR1651168}, $L$ has a compact open subgroup $G$ such that $G\subset \nabla\inv(G^\perp)$ and $\nabla\inv(G^\perp)/G$ is finite.
  If $G\neq \nabla\inv(G^\perp)$, pick any $x\in \nabla\inv(G^\perp)\setminus G$.
  Let $G'$ be the subgroup of $L$ generated by $G$ and $x$; it is again compact and open.
  Since $\nabla$ is a symplectic self-duality, $\nabla(x)(x)=0$.
  It follows that $G\subsetneq G'\subset \nabla\inv(G^{\prime \perp})\subset \nabla\inv(G^\perp)$.
  Thus, by replacing $G$ with $G'$, the index of $G$ in $\nabla\inv(G^\perp)$ can be reduced.
  In a finite number of steps, a compact open subgroup $G$ is obtained for which $G=\nabla\inv(G^\perp)$, so $G$ is a maximal isotropic subgroup.
\end{proof}
\begin{theorem}
  \label{theorem:anti-dual-extensions}
  Let $L$ be a locally compact abelian group.
  Consider the following statements:
  \begin{enumerate}
  \item \label{item:1} The group $L$ has a compact open subgroup and admits a symplectic self-duality.
  \item \label{item:2} There exists a compact abelian group $G$ such that $L$ is an antidual extension over $\hat G$ with kernel $G$.
  \end{enumerate}
  Then \ref{item:1} implies \ref{item:2}.
  If $L$ is $2$\dash regular, then \ref{item:1} and \ref{item:2} are equivalent.
\end{theorem}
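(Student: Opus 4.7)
The implication (\ref{item:1})$\Rightarrow$(\ref{item:2}) should be essentially immediate: since $L$ has a compact open subgroup and a symplectic self-duality, Lemma~\ref{lemma:compact-open-maximal-isotropic} extracts a compact open maximal isotropic subgroup $G\subset L$, and Lemma~\ref{lemmma:max-iso} then presents $L$ as an extension of $\hat G$ by $G$ for which the diagram (\ref{eq:3}) commutes with $\epsilon=-\id{\hat G}$, which is exactly what it means for the extension to be antidual.

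For the converse under the 2-regularity hypothesis, my plan is to start from the isomorphism $\nabla:L\to\hat L$ that Remark~\ref{remark:anti-dual-extensions} attaches to an antidual extension and then to symmetrize away its symmetric part. First I would dualize the two relations $\nabla\circ f=\hat q$ and $\hat f\circ\nabla=-q$ and observe that $-\hat\nabla$ satisfies them as well. This forces $\sigma:=\nabla+\hat\nabla$ to kill $f(G)$ and to be killed by $\hat f$, so $\sigma$ factors uniquely as $\sigma=\hat q\circ\psi\circ q$ for some $\psi:\hat G\to G$; comparing $\sigma$ with $\hat\sigma$ then yields $\hat\psi=\psi$.

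Next I would use 2-regularity of $L$ (which also makes $\hat L$ 2-regular) to invert multiplication by $2$ on $\Hom(L,\hat L)$, producing the unique $\eta:L\to\hat L$ with $2\eta=\sigma$; the self-duality of $\psi$ together with this uniqueness forces $\hat\eta=\eta$. Setting $\nabla':=\nabla-\eta$, one has $\nabla'+\hat{\nabla'}=\sigma-2\eta=0$, i.e., the bilinear form $B'(x,y):=\nabla'(x)(y)$ is antisymmetric. The main technical point — and the place where I expect the most care to be needed — will be verifying that $\eta\circ f=0$ and $\hat f\circ\eta=0$, which is what ensures that $\nabla'$ still fits into the antidual diagram (\ref{eq:3}); both identities follow by halving $2\eta=\hat q\circ\psi\circ q$ inside $\Hom(G,\hat L)$ and $\Hom(L,\hat G)$ respectively, exploiting $q\circ f=0$ and the injectivity of multiplication by $2$ on these Hom groups (which again uses 2-regularity of $L$). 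Once $\nabla'$ is known to satisfy the diagram, the five lemma forces it to be an isomorphism.

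Finally, to upgrade the antisymmetry of $B'$ to the alternating condition, I would observe that $B'(x+y,x+y)=B'(x,x)+B'(y,y)$ by antisymmetry, so $x\mapsto B'(x,x)$ is a continuous homomorphism $L\to\T$ landing in $\T[2]\cong\Z/2\Z$; since $2L=L$ while $\T[2]$ has exponent $2$, this homomorphism must vanish, and $\nabla'$ is symplectic. The hardest part of the overall argument is the bookkeeping around 2-regularity across the various Hom groups in the middle step; once that is done, the final two-torsion argument is essentially automatic.
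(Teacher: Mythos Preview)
Your proof is correct and ultimately produces the same symplectic self-duality as the paper: your $\nabla'=\nabla-\eta$ with $2\eta=\nabla+\hat\nabla$ is precisely $(\nabla-\hat\nabla)/2$. The paper's argument is shorter because it skips the factorization $\sigma=\hat q\circ\psi\circ q$ entirely: once one sees that both $\nabla$ and $-\hat\nabla$ make the diagram (\ref{eq:3}) commute, their average $(\nabla-\hat\nabla)/2$ (well-defined by $2$\dash regularity) automatically does too, and it is symplectic since $(\nabla-\hat\nabla)(x)(x)=0$ identically and $2L=L$; the five lemma then gives the isomorphism.
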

\begin{proof}[Proof that \ref{item:1} implies \ref{item:2}]
  By Lemma~\ref{lemma:compact-open-maximal-isotropic}, $L$ has a compact open maximal isotropic subgroup $G$.
  By Lemma~\ref{lemmma:max-iso} and Remark~\ref{remark:anti-dual-extensions}, $L$ is an antidual extension over $\hat G$ with kernel $G$.
\end{proof}
\begin{proof}[Proof that \ref{item:2} implies \ref{item:1}]
  Assume that $L$ is an antidual extension over $\hat G$ with kernel $G$.
  By Remark~\ref{remark:anti-dual-extensions}, there exists an isomorphism $\nabla:L\to \hat L$ such that (\ref{eq:3}) (with $\epsilon=-\id{\hat G}$) commutes.
  Dualizing (\ref{eq:3}) gives
  \begin{equation*}
    \xymatrix{
      0 \ar[r] & G \ar[r]^f \ar[d]_{-\id{G}} & L \ar[r]^q \ar[d]_{\hat \nabla} & \hat G \ar@{=}[d]  \ar[r] & 0\\
      0 \ar[r] & G \ar[r]_{\hat q} & \hat L \ar[r]_{\hat f} & \hat G \ar[r] & 0.
    }
    \end{equation*}
  The commutativity of the above diagram is equivalent to the commutativity of
  \begin{equation*}
    \xymatrix{
      0 \ar[r] & G \ar@{=}[d] \ar[r]^f & L \ar[d]^{-\hat \nabla} \ar[r]^q & \hat G \ar[d]^{-\id{\hat G}} \ar[r] & 0\\
      0 \ar[r] & G \ar[r]_{\hat q} & \hat L \ar[r]_{\hat f} & \hat G \ar[r] & 0.
    }
    \end{equation*}
  Therefore (\ref{eq:3}) continues to commute when $\nabla$ is replaced by $-\hat \nabla$, and hence also when it is replaced by $(\nabla -\hat \nabla)/2$, which is a symplectic self-duality.
\end{proof}
\section{Extensions and products}
\label{sec:extensions-products}
Suppose that $G=\prod_i G_i$ (a possibly infinite product).
Starting with the extension \ref{eq:5} of Section~\ref{sec:antidual}, construct the diagram
\begin{equation}
  \label{eq:4}
  \xymatrix{
    (\xi) & 0 \ar[r] & G \ar[r]^f \ar[d]^{p_i} \ar@{}[dr]|\square & L \ar[r]^q \ar[d] & \hat G \ar[r] \ar@{=}[d] & 0\\
    (\xi_{i*}) & 0 \ar[r] & G_i \ar[r]^{f_{i*}} \ar@{=}[d] & L_{i*} \ar[r]^{q_{i*}} \ar@{}[dr]|\square & \hat G \ar[r] & 0\\
    (\xi_{ij}) & 0 \ar[r] & G_i \ar[r]^{f_{ij}} & L_{ij} \ar[u] \ar[r]^{q_{ij}} & \hat G_j \ar[r] \ar[u]_{\hat p_j}& 0
  }
\end{equation}
by first constructing $L_{i*}$ as a push-out of the square on the top-left, where $p_i$ denotes the projection onto the $i$th component, completing the second row using the universal property of the the push-out, constructing $L_{ij}$ as the pull-back in the square on the bottom right and completing the third row using the universal property of pull-backs.  Note that
$L$ (together with the map $q:L\to \hat G$) can be recovered from the extensions $\xi_{i*}$ as the fibred product of the $q_{i*}:L_{i*}\to \hat G$'s.
Each $L_{i*}$ (together with the maps $f_{i*}:G_i\to L_{i*}$) can in turn be recovered from the extensions $\xi_{ij}$ as the fibred coproduct of the $f_{ij}:G_i\to L_{ij}$'s.
In fact, $\xi\mapsto (\xi_{ij})$ is the standard isomorphism \cite[VI,~Proposition~1.2]{homologicalalgebra}:
\begin{equation*}
  E(\hat G,G)\tilde\to \prod_{i,j} E(\hat G_j,G_i).
\end{equation*}
The same family of extensions is obtained by first taking a pull-back and then a push-out:
\begin{equation}
  \label{eq:6}
  \xymatrix{
    (\xi) & 0 \ar[r] & G \ar[r]^f \ar@{=}[d] & L \ar[r]^q \ar@{}[dr]|\square & \hat G \ar[r] & 0\\
    (\xi_{*j}) & 0 \ar[r] & G \ar[r]^{f_{*j}}\ar[d]^{p_i} \ar@{}[dr]|\square & L_{*j} \ar[u] \ar[r]^{q_{*j}}\ar[d] & \hat G_j \ar[u]_{\hat p_j}\ar@{=}[d] \ar[r] & 0\\
    (\xi_{ij}) & 0 \ar[r] & G_i \ar[r]^{f_{ij}} & L_{ij} \ar[r]^{q_{ij}} & \hat G_j \ar[r] & 0
  }
\end{equation}
\begin{lemma}
  \label{lemma:dual-product}
  The extensions $\widehat{\xi_{ij}}$ and $(\hat\xi)_{ji}$ are equivalent.
\end{lemma}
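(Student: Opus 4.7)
The approach is to exploit the fact that Pontryagin duality is an exact contravariant functor on locally compact abelian groups, which interchanges push-out and pull-back squares. Combined with the observation (already used in passing from (\ref{eq:4}) to (\ref{eq:6})) that push-out along the left side of an extension commutes with pull-back along the right, the lemma reduces to a short manipulation of the Baer-group class of $\xi$.

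First, I would record two general identities. For any short exact sequence $\xi: 0 \to A \to E \to B \to 0$ of locally compact abelian groups and homomorphisms $\alpha: A \to A'$ and $\beta: B' \to B$, one has
\begin{equation*}
  \widehat{\alpha_* \xi} \equiv \hat\alpha^* \hat\xi \quad\text{and}\quad \widehat{\beta^* \xi} \equiv \hat\beta_* \hat\xi
\end{equation*}
in the relevant Baer groups. These follow from dualizing the defining push-out (resp.~pull-back) square, together with the natural identification $\widehat{\hat\alpha} = \alpha$: duality turns the push-out square for $\alpha_* \xi$ into a pull-back square for $\hat\alpha^* \hat\xi$, and vice versa.

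With these in hand, unwinding diagram (\ref{eq:4}) gives $\xi_{ij} \equiv \hat p_j^* \bigl((p_i)_* \xi\bigr)$. Applying the two identities in turn,
\begin{equation*}
  \widehat{\xi_{ij}} \equiv \bigl(\widehat{\hat p_j}\bigr)_* \bigl(\widehat{(p_i)_* \xi}\bigr) \equiv (p_j)_* \bigl(\hat p_i^* \hat\xi\bigr).
\end{equation*}
On the other hand, applying the construction (\ref{eq:4}) to $\hat\xi$ with the roles of the indices reversed yields $(\hat\xi)_{ji} \equiv \hat p_i^* \bigl((p_j)_* \hat\xi\bigr)$, which equals $(p_j)_* \bigl(\hat p_i^* \hat\xi\bigr)$ by the equivalence of the constructions (\ref{eq:4}) and (\ref{eq:6}) applied to $\hat\xi$. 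Comparing the two expressions gives the lemma. The only point requiring care is tracking the swap $i \leftrightarrow j$, which is forced by duality exchanging the left and right sides of an extension; no substantive obstacle is anticipated.
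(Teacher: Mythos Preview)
Your proof is correct and is essentially the same argument as the paper's, just unpacked more explicitly in Baer-group notation. The paper's one-sentence proof---that dualizing diagram (\ref{eq:4}) yields the construction (\ref{eq:6}) applied to $\hat\xi$ with $i$ and $j$ swapped---is exactly the content of your two identities $\widehat{\alpha_*\xi}\equiv\hat\alpha^*\hat\xi$ and $\widehat{\beta^*\xi}\equiv\hat\beta_*\hat\xi$ combined with the commutativity of push-out and pull-back.
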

\begin{proof}
  Applying Pontryagin duality to (\ref{eq:4}) is the same as starting with $\hat \xi$ and applying the construction of (\ref{eq:6}) to it, but with $i$ and $j$ interchanged.
  The former results in $\widehat{\xi_{ij}}$, while the latter results in $(\hat \xi)_{ji}$.
\end{proof}
\begin{lemma}
  Suppose that $\xi$ is autodual (antidual) and there exists $\nabla:L\to \hat L$ such that (\ref{eq:3}) commutes with $\epsilon=\id{\hat G}$ (respectively, $\epsilon=\id{\hat G}$) and such that $\hat \nabla=\nabla$ (respectively, $\hat \nabla=-\nabla$).
  Then $(\hat\xi)_{ij}=\xi_{ij}$ (respectively, $(\hat\xi)_{ij}=-\xi_{ij}$) in $E(\hat G_j,G_i)$.
\end{lemma}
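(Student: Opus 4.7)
The plan is to reduce the claim to the definition of auto/antiduality by exploiting the naturality of the push-out/pull-back construction of (\ref{eq:4}).

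First, I would observe that by Remark~\ref{remark:anti-dual-extensions}, the existence of $\nabla:L\to \hat L$ making (\ref{eq:3}) commute with $\epsilon=\id{\hat G}$ (respectively $-\id{\hat G}$) is equivalent to $\xi$ being autodual (respectively antidual), that is, to the equality $\hat\xi=\xi$ (respectively $\hat\xi=-\xi$) in the Baer group $E(\hat G,G)$. The additional symmetry condition $\hat\nabla=\nabla$ (respectively $\hat\nabla=-\nabla$) pins down the particular $\nabla$ chosen but does not alter the equivalence class of $\xi$ or $\hat\xi$, so it will not enter at the level of elements of $E(\hat G,G)$.

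Second, I would invoke the fact recalled in the excerpt: the assignment $\eta\mapsto(\eta_{ij})$ produced by the push-out/pull-back diagram (\ref{eq:4}) is the standard group isomorphism
\begin{equation*}
  E(\hat G,G)\;\tilde\to\;\prod_{i,j}E(\hat G_j,G_i)
\end{equation*}
from \cite[VI,~Proposition~1.2]{homologicalalgebra}. In particular, this map is additive, so it sends $-\eta$ to $(-\eta_{ij})$.

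Applying this homomorphism to the equality $\hat\xi=\pm\xi$ then immediately yields $(\hat\xi)_{ij}=\pm\xi_{ij}$ in $E(\hat G_j,G_i)$ for every pair $i,j$, which is the conclusion.

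The only substantive ingredient is the additivity of the push-out/pull-back construction, which is part of the isomorphism cited above and is standard homological algebra. A more hands-on alternative, avoiding the citation, would be to induce $\nabla_{ij}:L_{ij}\to\widehat{L_{ji}}$ by pushing $\nabla$ forward along $p_i$ and pulling back along $\hat p_j$; the symmetry hypothesis $\hat\nabla=\pm\nabla$ would then be precisely what is needed to identify the resulting map with one witnessing the equivalence $(\hat\xi)_{ij}\cong\pm\xi_{ij}$ directly via a diagram chase in (\ref{eq:4}) and (\ref{eq:6}). I expect no serious obstacle in either route.
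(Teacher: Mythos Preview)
Your argument is correct. The paper's one-line proof is precisely your ``hands-on alternative'': it applies the push-out/pull-back constructions of (\ref{eq:4}) and (\ref{eq:6}) functorially to the morphism of extensions given by (\ref{eq:3}), producing explicit equivalences $\nabla_{ij}$. Your primary route is a small abstraction of this---you pass through the additive isomorphism $E(\hat G,G)\cong\prod_{i,j}E(\hat G_j,G_i)$ and read off the conclusion from $\hat\xi=\pm\xi$ directly---which is entirely legitimate and arguably cleaner for the bare statement. One point worth flagging: you correctly observe that the symmetry hypothesis $\hat\nabla=\pm\nabla$ is not needed for the conclusion as stated (an equality of extension classes), whereas the paper's construction of the specific maps $\nabla_{ij}$ does carry the chosen $\nabla$ along. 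Those explicit $\nabla_{ij}$, with their compatibility under dualizing, are what get used downstream (see the proof of Theorem~\ref{theorem:standard-triples} and the diagrams in Section~\ref{sec:standard}), so the hypothesis earns its keep there even if it is idle for the lemma itself.
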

\begin{proof}
  The constructions of (\ref{eq:4}) and (\ref{eq:6}) are functorial, and when applied to both rows of (\ref{eq:3}), give rise to equivalences $\nabla_{ij}:L_{ij}\to \widehat{L_{ij}}$.
\end{proof}
\section{Standard triples viewed as extensions}
\label{sec:standard}
Let $A$ be a locally compact abelian group which has a compact open subgroup $B$.
Then $B^\perp$ is a compact open subgroup of $\hat A$.
Consider the standard triple $(L^0,\nabla^0,G^0)$ as in Example~\ref{eg:main-questions}, where $L^0=A\times \hat A$ and $G^0=B\times B^\perp$.
Since $G^0$ is compact open, we may think of $L^0$ as an extension
\begin{equation}
  \label{eq:7}
  \xses{\xi^0}{G^0}{L^0}{\widehat{G^0}}{f^0}{q^0}
\end{equation}
by Lemma~\ref{lemmma:max-iso}.
Applying the construction of (\ref{eq:4}) to the product decomposition $G=B\times B^\perp$ gives four extensions $\xi^0_{ij}$, $i,j\in \{1,2\}$.
Keeping track of the morphisms functorially induced by $\nabla^0$ one gets
\begin{equation}
  \label{eq:24}
  \tag{$\xi^0_{12}$}
  \xymatrix{
    0 \ar[r] & B \ar[r]^{f^0_{12}} \ar@{=}[d] & A \ar[r]^{q^0_{12}} \ar[d]^{\nabla^0_{12}} & \widehat{B^\perp} \ar[d]^{-1} \ar[r] & 0\\
    0 \ar[r] & B \ar[r]_{\widehat{q^0_{21}}} & A \ar[r]_{\widehat{f^0_{21}}} & \widehat{B^\perp} \ar[r] & 0,
  }
\end{equation}
where $f^0_{12}$ is the inclusion map $B\to A$, $q^0_{12}$ is the restriction of an element of $A$ (a character of $\hat A$) to $B^\perp$ and $\nabla^0_{12}$ takes an element of $A$ to its inverse,
\begin{equation}
  \label{eq:25}
  \tag{$\xi^0_{21}$}
  \xymatrix{
    0 \ar[r] & B^\perp \ar[r]^{f^0_{21}} \ar@{=}[d] & \hat A \ar[r]^{q^0_{21}} \ar[d]^{\nabla^0_{21}} & \hat B \ar[d]^{-1} \ar[r] & 0\\
    0 \ar[r] & B^\perp \ar[r]_{\widehat{q^0_{12}}} & \hat A \ar[r]_{\widehat{f^0_{12}}} & \hat B \ar[r] & 0,
  }
\end{equation}
where $f^0_{21}$ is the inclusion map $B^\perp\to \hat A$, $q^0_{21}$ takes a character of $A$ to the inverse of its restriction to $B$ and $\nabla^0_{21}$ is the identity map,
\begin{equation}
  \label{eq:36}
  \tag{$\xi^0_{11}$}
  \xymatrix{
    0 \ar[r] & B \ar[r]^{f^0_{11}} \ar@{=}[d] & B\times \hat B \ar[d]^{\nabla^0_{11}} \ar[r]^{q^0_{11}} & \hat B \ar[r] \ar[d]^{-1}& 0\\
    0 \ar[r] & B \ar[r]_{\widehat{q^0_{11}}} & \hat B \times B \ar[r]_{\widehat{f^0_{11}}} & \hat B \ar[r] & 0,
  }
\end{equation}
where $f^0_{11}(x)=(x,0)$, $q^0_{11}(x,\chi)=-\chi$ and $\nabla^0_{11}(x,\chi)=(\chi,-x)$ for $x\in B$ and $\chi \in \hat B$, and finally,
\begin{equation}
  \label{eq:37}
  \tag{$\xi^0_{22}$}
  \xymatrix{
    0 \ar[r] & B^\perp \ar[r]^{j^0_{22}} \ar@{=}[d] & \widehat{B^\perp}\times B^\perp \ar[d]^{\nabla^0_{22}} \ar[r]^{q^0_{22}} & \widehat{B^\perp} \ar[d]^{-1} \ar[r] & 0\\
    0 \ar[r] & B^\perp \ar[r]_{q^0_{22}} & B^\perp\times \widehat{B^\perp} \ar[r]_{\widehat{q^0_{22}}} & \widehat{B^\perp} \ar[r] & 0.
  }
\end{equation}
where $f^0_{22}(\chi)=(0,\chi)$, $q^0_{22}(x,\chi)=x$ and  $\nabla^0_{22}(x,\chi)=(\chi,-x)$ for $x\in \widehat{B^\perp}$, $\chi \in B^\perp$.
\emph{In particular, $\xi^0_{11}$ and $\xi^0_{22}$ are split.}
\begin{theorem}
  \label{theorem:standard-triples}
  Let $\nabla$ be a symplectic self-duality of a locally compact abelian group $L$ which has a compact open subgroup.
  Let $G$ be a compact open maximal isotropic subgroup of $L$.
  Consider the following statements:
  \begin{enumerate}
  \item \label{item:3}The triple $(L,\nabla,G)$ is  standard.
  \item \label{item:4}The group $G$ admits a decomposition $G=G_1\times G_2$ such that the extensions $\xi_{11}$ and $\xi_{22}$ obtained by applying (\ref{eq:4}) to the extension $\xi$ constructed in Lemma~\ref{lemmma:max-iso} are split.
  \end{enumerate}
  Then \ref{item:3} implies \ref{item:4}.
  If $G$ is $2$\dash regular then \ref{item:3} and \ref{item:4} are equivalent.
\end{theorem}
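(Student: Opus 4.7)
The direction \ref{item:3} $\Rightarrow$ \ref{item:4} is a transport of structure. An isomorphism $(L,\nabla,G)\cong(L^0,\nabla^0,G^0)$ with $G^0 = B\times B^\perp$ transports the product decomposition $G = G_1\times G_2$ to $B\times B^\perp$. The extensions $\xi^0_{11}$ and $\xi^0_{22}$ displayed just before the theorem are manifestly split; by the naturality of the construction (\ref{eq:4}), so are $\xi_{11}$ and $\xi_{22}$.

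For \ref{item:4} $\Rightarrow$ \ref{item:3} under $2$-regularity, the plan is to build a candidate standard triple directly from the extension data. Set $A:=L_{12}$, let $B:=f_{12}(G_1)$ be its compact open subgroup, and note $A/B\cong\hat{G_2}$, whence $B^\perp\cong G_2$. Lemma~\ref{lemma:dual-product} applied to the antidual $\xi$ (so that $\hat\xi=-\xi$) identifies $\hat A$ with $L_{21}$. The standard triple $(A\times\hat A,\nabla^0,B\times B^\perp)$ then produces an antidual extension $\xi^0$ whose components via (\ref{eq:4}) satisfy $\xi^0_{12}=\xi_{12}$ (by construction) and $\xi^0_{21}=\xi_{21}$ (via the duality identification), while $\xi^0_{11}$ and $\xi^0_{22}$ are split (as computed explicitly above). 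By hypothesis \ref{item:4}, $\xi_{11}$ and $\xi_{22}$ are also split. Since $E(\hat G,G)\cong\prod_{i,j}E(\hat{G_j},G_i)$, we conclude $[\xi^0]=[\xi]$, so an isomorphism of extensions $\phi:L^0\to L$ exists with $\phi(G^0)=G$.

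It remains to correct the symplectic structure. The pullback $\nabla':=\hat\phi\circ\nabla\circ\phi$ is a symplectic self-duality of $L^0$ inducing the same antidual extension $\xi^0$ as $\nabla^0$. A diagram chase through (\ref{eq:3}) shows that $\nabla^0-\nabla'=\hat{q^0}\circ\alpha\circ q^0$ for some $\alpha:\hat G\to G$, and the symplecticity of both $\nabla^0$ and $\nabla'$ forces $\hat\alpha=-\alpha$. By $2$-regularity of $G$, multiplication by $2$ is an automorphism of $G$ and hence of $\Hom(\hat G,G)$, so $\beta:=\alpha/2$ is well-defined. The map $\psi:=\id{L^0}+f^0\circ\beta\circ q^0$ is then an automorphism of $L^0$ fixing $G^0$ (as $q^0$ vanishes on $G^0$), and using the identities $\nabla'\circ f^0=\hat{q^0}$, $\hat{f^0}\circ\nabla'=-q^0$, and $\hat{f^0}\circ\hat{q^0}=0$, one computes $\hat\psi\circ\nabla'\circ\psi=\nabla^0$. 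Composing $\phi$ with $\psi$ yields the desired isomorphism of triples.

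The main obstacle lies in this last step: agreement of antidual extension classes does not pin down the symplectic self-duality uniquely, and the $2$-regularity hypothesis is precisely what permits the difference cocycle $\alpha$ to be halved and absorbed into an automorphism of $L^0$ preserving $G^0$. Matching $\xi^0_{21}$ with $\xi_{21}$ also requires careful bookkeeping of signs through the antidual identification $\hat A\cong L_{21}$, but this is a functorial consequence of Lemma~\ref{lemma:dual-product} and the hypothesis $\hat\xi=-\xi$.
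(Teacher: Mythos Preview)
Your argument is correct, and it takes a genuinely different route from the paper's. The paper proceeds componentwise: it builds isomorphisms $\phi_{ij}:L^0_{ij}\to L_{ij}$ that are individually compatible with the maps $\nabla_{ij}$ induced by $\nabla$ (taking $\phi_{12}=\id{A}$, $\phi_{21}=\nabla_{21}\inv\widehat{\phi_{12}}\inv$, and obtaining $\phi_{11},\phi_{22}$ from the standardness of $(L_{ii},\nabla_{ii},G_i)$ via Lemmas~\ref{lemma:polarization-implies-standard} and~\ref{lemma:split-maximal-isotropics}); the global $\phi$ is then assembled from fibred products and coproducts and automatically intertwines $\nabla^0$ with $\nabla$. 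You instead work globally: first match only the extension class $[\xi^0]=[\xi]$ in $\prod_{i,j}E(\hat G_j,G_i)$ to obtain some extension isomorphism $\phi$, and then repair the symplectic form afterwards by the unipotent correction $\psi=\id{L^0}+f^0\beta q^0$. The $2$\dash regularity hypothesis enters at different points---in the paper through Lemma~\ref{lemma:split-maximal-isotropics} to straighten out the diagonal pieces $(L_{ii},\nabla_{ii})$, in your proof to halve the skew defect $\alpha\in\Hom(\hat G,G)$---but both uses are essentially the same phenomenon. Your approach is arguably more transparent about \emph{why} $2$\dash regularity is needed (it isolates the obstruction as a single skew homomorphism to be halved), while the paper's approach keeps closer track of the $\nabla_{ij}$ throughout and so avoids the separate correction step.
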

\begin{proof}
  That \ref{item:3} implies \ref{item:4} is clear from the discussion preceding the statement of Theorem~\ref{theorem:standard-triples}.

  For the converse, set $A=L_{12}$ and $B=G_1$.  Observe that
  $B$ is a subgroup of $A$ via the inclusion $f_{12}$.
  Thus $A/B=\widehat{B^\perp}$ is identified with $\widehat{G_2}$, hence $B^\perp$ with $G_2$.
  Let $\xi$ denote the extension associated to the triple $(L,\nabla,G)$ by Lemma~\ref{lemmma:max-iso}.
  An isomorphism $\phi:L^0\to L$ with the required properties will be constructed by constructing isomorphisms $\phi_{ij}:L^0_{ij}\to L_{ij}$ which give equivalences of extensions between $\xi^0_{ij}$ and $\xi_{ij}$ for all $i,j\in \{1,2\}$.

  Let $\phi_{12}:A\to L_{12}$ be the identity map, $\phi_{21}:\hat A\to L_{21}$ be $\nabla_{21}\inv \widehat{\phi_{12}}\inv$.
  Since the extensions $\xi_{11}$ and $\xi_{22}$ are split and $G$ is $2$\dash regular, Lemmas~\ref{lemma:polarization-implies-standard} and~\ref{lemma:split-maximal-isotropics} imply that $(L_{ii},\nabla_{ii},G_i)$ are standard, giving isomorphisms $\phi_{ii}:L^0_{ii}\to L_{ii}$ which give equivalences of the extensions $\xi^0_{ii}$ and $\xi_{ii}$ for $i\in \{1,2\}$.

  The isomorphisms $\phi_{ij}$ above have been chosen in such a way that
  \begin{equation*}
    \xymatrix{
      L^0_{ij}\ar[r]^{\nabla^0_{ij}} \ar[d]_{\phi_{ij}} & (\widehat{L^0})_{ij}\\
      L_{ij} \ar[r]^{\nabla_{ij}} \ar[r] & (\hat L)_{ij} \ar[u]_{\widehat{\phi_{ji}}}
    }
  \end{equation*}
  commutes.
  Recall that $\xi$ can be recovered from the $\xi_{ij}$'s using fibred products and coproducts (Section~\ref{sec:extensions-products}).
  By the universal properties of fibred products and coproducts, there is an induced map $\phi:L^0\to L$, which gives an isomorphism between $(L^0,\nabla^0,G^0)$ and $(L,\nabla,G)$.
\end{proof}
\section{Extensions of finite $p$-groups, non-standard triples}
\label{sec:finite}
For each finite non-increasing sequence $(\lambda_1,\ldots,\lambda_l)$ of positive integers let $p^\lambda$ denote the function $(x_1,\ldots,x_l)\mapsto (p^{\lambda_1}x_1,\ldots,p^{\lambda_l}x_l)$.
Let
\begin{gather*}
  \dtorsion \lambda = \{\mu\in \T^l\;:\;p^\lambda(\mu)=0\},\\
  \ctorsion \lambda = \Z^l/p^\lambda(\Z^l).
\end{gather*}
Note that $\dtorsion \lambda$ and $\ctorsion \lambda$ are mutual Pontryagin duals under the pairing
\begin{equation*}
  ((\mu_1,\ldots,\mu_l),(x_1,\ldots,x_l))\mapsto \mu_1x_1+\cdots +\mu_lx_l.
\end{equation*}
To the projective resolution
\begin{equation*}
  \ses{\Z^l}{\Z^l}{\dtorsion \lambda}{p^\lambda}{p^{-\lambda}}
\end{equation*}
apply the contravariant functor $\Hom(-,\ctorsion \lambda)$ to get the exact sequence
\begin{equation}
  \label{eq:13}
  \xymatrix{
    \Hom(\Z^l,\ctorsion \lambda) \ar[r]^{\circ p^\lambda} & \Hom(\Z^l,\ctorsion \lambda) \ar[r]^{\partial} & E(\dtorsion \lambda, \ctorsion \lambda) \ar[r] & 0.
  }
\end{equation}
To an $l\times l$ matrix $\alpha=(\alpha_{ij})$ with integer entries, associate the homomorphism $\Z^l\to \ctorsion \lambda$ which takes the $i$th coordinate vector $e_i$ to the image of $(\alpha_{i1},\ldots,\alpha_{il})$ in $\ctorsion \lambda$.
Its image $\partial(\alpha)$ in $E(\dtorsion \lambda,\ctorsion \lambda)$ is determined by the values of $\alpha_{ij}$ modulo $p^{\min\{\lambda_i,\lambda_j\}}$ and the extension $\xi(\alpha)$ corresponding to $\partial(\alpha)$ is given by the push-out
\begin{equation}
  \label{eq:22}
  \xymatrix{
    &0 \ar[r] & \Z^l \ar[r]^{p^\lambda} \ar[d]_{\alpha} \ar@{}[dr]|\square & \Z^l \ar[r]^{p^{-\lambda}}\ar[d]^\beta & \dtorsion\lambda \ar[r]\ar@{=}[d] & 0\\
    (\xi(\alpha))&0 \ar[r] & \ctorsion\lambda \ar[r]_f & L \ar[r]_q & \dtorsion\lambda \ar[r] & 0
  }
\end{equation}
with the map $q:L\to \dtorsion\lambda$ determined by the universal property of the push-out and the maps
\begin{equation*}
  \xymatrix{
    \Z^l \ar[r]^{p^\lambda}\ar[d]_{\alpha} & \Z^l \ar[d]^{p^{-\lambda}}\\
    \ctorsion\lambda \ar[r]_0 & \dtorsion\lambda.
  }
\end{equation*}
In what follows, we will always replace the entries of $\alpha$ in such a way that $0\leq \alpha_{ij}<p^{\min\{\lambda_i,\lambda_j\}}$.
\begin{remark}
  From the explicit construction of the push-out in (\ref{eq:22}), one sees that the group $L$ is the quotient of $\Z^{2l}$ by the column space of the block matrix
  $\mat{p^\lambda}{-\alpha}0{p^\lambda}\in M_{2l}(\Z)$.
  After left and right multiplication by elements of $GL_{2l}(\Z)$ this matrix can be reduced to Smith canonical form, namely a diagonal matrix with entries $p^{\mu_1},p^{\mu_2},\ldots,p^{\mu_{2l}}$, with $\mu_1\geq \mu_2\geq \cdots \geq \mu_{2l}\geq 0$.
  Therefore $L$ is isomorphic to $\Z/p^{\mu_1}\times \cdots\times \Z/p^{\mu_{2l}}$.
  The rank of $L$ is $2l$ less the number of $\mu_i$ which are $0$, or in other words, $2l-\rank_{\Z/p\Z}(\bar \alpha)$, where $\bar \alpha$ is the image of $\alpha$ in $M_l(\Z/p\Z)$.
\end{remark}
Starting with the injective resolution
\begin{equation*}
  \ses{\ctorsion\lambda}{\T^l}{\T^l}{\widehat{p^{-\lambda}}}{\widehat{p^\lambda}}
\end{equation*}
apply the covariant functor $\Hom(\dtorsion\lambda,-)$ to get the exact sequence
\begin{equation*}
  \xymatrix{
    \Hom(\dtorsion\lambda,\T^l)\ar[r]^{\widehat{p^\lambda}\circ} & \Hom(\dtorsion\lambda,\T^l)\ar[r]^{\partial^*} & E(\dtorsion\lambda,\ctorsion\lambda) \ar[r] & 0.
  }
\end{equation*}
Starting with $\hat \alpha:\dtorsion \lambda\to \T^l$ (the adjoint of $\alpha:\Z^l\to \ctorsion\lambda$ described above), an extension $\xi^*(\alpha)$ with class $\partial^*(\alpha)$ is constructed as a pullback
\begin{equation}
  \label{eq:23}
  \xymatrix{
    (\xi^*(\alpha))&0 \ar[r] & \ctorsion\lambda \ar[r]^{f^*} \ar@{=}[d] & L^* \ar[r]^{q^*} \ar[d] \ar@{}[dr]|\square & \dtorsion\lambda \ar[r] \ar[d]^{\hat \alpha} & 0\\
    &0 \ar[r] & \ctorsion\lambda \ar[r]_{\widehat{p^{-\lambda}}} & \T^l \ar[r]_{\widehat{p^\lambda}} & \T^l \ar[r] & 0.
  }
\end{equation}
Since (\ref{eq:23}) is the Pontryagin dual of (\ref{eq:22}),
\begin{equation}
  \label{eq:8}
  \xi^*(\alpha)=\widehat{\xi(\alpha)}.
\end{equation}
\begin{prop}
  \label{prop:transpose}
  For any $l\times l$ matrix $\alpha$ with integer entries $\xi^*(\transpose\alpha)=\xi(\alpha)$.
\end{prop}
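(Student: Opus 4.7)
My plan is to produce an explicit equivalence between the extensions $\xi(\alpha)$ and $\xi^*(\transpose\alpha)$. From the explicit form of the pushout in (\ref{eq:22}), the middle group of $\xi(\alpha)$ is $L = (\ctorsion\lambda \oplus \Z^l)/R$, where $R$ is the subgroup generated by the pairs $(\alpha(x), -p^\lambda x)$ for $x \in \Z^l$; the extension maps are $c \mapsto [(c, 0)]$ and $[(c, z)] \mapsto p^{-\lambda}(z)$. Analogously, (\ref{eq:23}) with $\alpha$ replaced by $\transpose\alpha$ realizes the middle group of $\xi^*(\transpose\alpha)$ as the pullback $L^* = \{(t, \mu) \in \T^l \times \dtorsion\lambda : \widehat{p^\lambda}(t) = \widehat{\transpose\alpha}(\mu)\}$, with inclusion $c \mapsto (c/p^\lambda, 0)$ and quotient $(t, \mu) \mapsto \mu$.

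The bridge between the two descriptions is a lift of $\widehat{p^{-\lambda}}\circ \alpha : \Z^l \to \T^l$ through the surjection $p^\lambda : \Z^l \to \Z^l$. A natural choice is the homomorphism $s : \Z^l \to \T^l$ given by
\[
    s(z)_i \;:=\; \sum_{j=1}^l \alpha_{ji}\, z_j / p^{\lambda_i + \lambda_j} \pmod{\Z}.
\]
I will then define $\phi : L \to L^*$ by
\[
    \phi[(c, z)] \;:=\; \bigl(c/p^\lambda + s(z),\; p^{-\lambda}(z)\bigr),
\]
where $(c/p^\lambda)_i := c_i/p^{\lambda_i} \in \T$ for any integer lift $c_i$ of the $i$-th coordinate of $c$.

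Four routine checks complete the argument. (i) Image lies in $L^*$: $p^{\lambda_i}(c_i/p^{\lambda_i} + s(z)_i) = c_i + \sum_j \alpha_{ji} z_j / p^{\lambda_j}$, which equals $\widehat{\transpose\alpha}(p^{-\lambda}(z))_i$ in $\T$ since $c_i \in \Z$ vanishes there. (ii) Well-definedness on $L$: replacing $(c, z)$ by $(c + \alpha(x), z - p^\lambda x)$ alters the first coordinate of $\phi$ by $\sum_k \alpha_{ki} x_k / p^{\lambda_i}$ from the $c$-term and by $-\sum_j \alpha_{ji} p^{\lambda_j} x_j / p^{\lambda_i + \lambda_j} = -\sum_j \alpha_{ji} x_j / p^{\lambda_i}$ from the $s$-term, and these two sums cancel; the second coordinate is unchanged, since $x \in \Z^l$ vanishes in $\dtorsion\lambda$. (iii) Compatibility with the extension maps: $\phi[(c, 0)] = (c/p^\lambda, 0)$ is the inclusion of $\ctorsion\lambda$ into $L^*$, and the quotient to $\dtorsion\lambda$ is $p^{-\lambda}(z)$ on both sides. (iv) Being a morphism of short exact sequences that is the identity on the $\ctorsion\lambda$ and $\dtorsion\lambda$ ends, $\phi$ is an isomorphism by the five lemma.

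The main obstacle is purely bookkeeping: correctly placing the transpose in the formula for $s$ and verifying the cancellation in (ii). Conceptually, the transpose is forced because $s$ encodes the chain-level comparison between the projective resolution of $\dtorsion\lambda$ used to construct $\partial$ and the injective resolution of $\ctorsion\lambda$ used to construct $\partial^*$, and this comparison swaps the two index positions of $\alpha$.
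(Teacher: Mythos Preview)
Your proof is correct and follows essentially the same approach as the paper's: both construct a map from the pushout $L$ to $\T^l$ (equivalently, to the pullback $L^*$) via a lift of $\widehat{\transpose\alpha}\circ p^{-\lambda}$ through $\widehat{p^\lambda}$, your $s$ being precisely the paper's $\tilde\gamma$. The only difference is stylistic---the paper invokes projectivity of $\Z^l$ and the universal properties of pushouts and pullbacks abstractly, whereas you write everything out explicitly and verify the compatibilities by direct computation.
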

\begin{proof}
  Start with (\ref{eq:22}).
  Since $\Z^l$ is projective, there exists $\tilde\gamma$ such that the diagram
  \begin{equation*}
    \xymatrix{
      & \Z^l \ar@{.>}[dl]_{\tilde \gamma}\ar[d]^{\widehat{\transpose\alpha}\circ p^{-\lambda}} &\\
      \T^l \ar[r]_{\widehat{p^\lambda}} & \T^l \ar[r] & 0
    }
  \end{equation*}
  commutes.
  This choice of $\tilde \gamma$ ensures that the diagram
  \begin{equation*}
        \xymatrix{
      \Z^l \ar[r]^{p^\lambda} \ar[d]_\alpha & \Z^l \ar[d]^{\tilde \gamma}\\
      \ctorsion\lambda \ar[r]_{\widehat{p^{-\lambda}}} & \T^l
    }
  \end{equation*}
  commutes.
  By the universal property of the push-out, there exists a morphism $\gamma:L\to \T^l$ such that the diagram
    \begin{equation}
    \label{eq:33}
    \xymatrix{
      0 \ar[r] & \Z^l \ar[r]^{p^\lambda} \ar[d]_\alpha \ar@{}[dr]|\square & \Z^l \ar[r]^{p^{-\lambda}} \ar[d]^\beta & \dtorsion\lambda \ar@{=}[d] \ar[r] & 0\\
      0 \ar[r] & \ctorsion\lambda \ar[r]^f \ar@{=}[d] & L \ar[r]^q \ar[d]_\gamma\ar@{}[dr]|\square & \dtorsion\lambda \ar[r] \ar[d]^{\widehat{\transpose\alpha}}& 0\\
      0 \ar[r] & \ctorsion\lambda \ar[r]_{\widehat{p^{-\lambda}}} & \T^l \ar[r]_{\widehat{p^\lambda}} & \T^l \ar[r] & 0
    }
  \end{equation}
  commutes.
  We will show that the square on the bottom right is indeed Cartesian.
  Let $L'$ be the pullback
  \begin{equation*}
    \xymatrix{
      L' \ar[r] \ar[d] \ar@{}[dr]|\square&\dtorsion\lambda \ar[d]^{\widehat{\transpose\alpha}}\\
      \T^l \ar[r]^{\widehat{p^\lambda}} & \T^l
    }
  \end{equation*}
  Comparison with (\ref{eq:23}) shows that $L'$ is an extension $\xi^*(\transpose\alpha)$ over $\dtorsion\lambda$ with kernel $\ctorsion\lambda$.
  By the universal property of pullbacks, there is a morphism $L\to L'$ which is in fact a morphism of extensions, hence an isomorphism, showing that the square on the bottom right of (\ref{eq:33}) is Cartesian.
  The top half of (\ref{eq:33}) shows that the extension in its second row is $\xi(\alpha)$, while the bottom half shows that the same extension is $\xi^*(\transpose\alpha)$.
\end{proof}
\begin{theorem}
  \label{theorem:transpose}
  For any $l\times l$\dash matrix $\alpha$ with integer entries, $\widehat{\xi(\alpha)}=\xi(\transpose\alpha)$.
\end{theorem}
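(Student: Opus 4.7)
The plan is to derive this theorem as an immediate combination of the two results already established just above it. Specifically, equation (\ref{eq:8}) gives the identity $\xi^*(\alpha) = \widehat{\xi(\alpha)}$, coming from the fact that (\ref{eq:23}) is literally the Pontryagin dual diagram of (\ref{eq:22}). On the other hand, Proposition~\ref{prop:transpose} asserts that $\xi^*(\transpose\alpha) = \xi(\alpha)$ in $E(\dtorsion\lambda, \ctorsion\lambda)$.

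The first step would be to apply Proposition~\ref{prop:transpose} with the matrix $\alpha$ replaced by its transpose $\transpose\alpha$. Since $\transpose{(\transpose\alpha)} = \alpha$, this yields $\xi^*(\alpha) = \xi(\transpose\alpha)$. The second step is to chain this equality with (\ref{eq:8}):
\begin{equation*}
  \widehat{\xi(\alpha)} = \xi^*(\alpha) = \xi(\transpose\alpha),
\end{equation*}
which is exactly the statement of the theorem.

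There is essentially no obstacle here; the substance of the work has already been done in Proposition~\ref{prop:transpose}, whose proof required constructing $\gamma:L\to \T^l$ via the universal property of the push-out and then verifying that the resulting bottom-right square in (\ref{eq:33}) is Cartesian. The present theorem is simply the efficient repackaging of that proposition together with the duality identification (\ref{eq:8}): Pontryagin dualization of an extension $\xi(\alpha)$ corresponds, at the level of defining matrices, to transposition. In this sense, the theorem is a structural remark that makes the symmetry of the construction $\alpha \mapsto \xi(\alpha)$ under Pontryagin duality transparent, and it will presumably be the form in which the result is invoked in subsequent sections (for instance in analyzing self-duality or antiduality of such extensions, where one needs to compare $\xi(\alpha)$ with $\pm\widehat{\xi(\alpha)}$).
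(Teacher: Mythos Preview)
Your proposal is correct and matches the paper's own proof essentially verbatim: the paper simply states that the theorem is an immediate consequence of (\ref{eq:8}) and Proposition~\ref{prop:transpose}, which is precisely the chain $\widehat{\xi(\alpha)} = \xi^*(\alpha) = \xi(\transpose\alpha)$ you wrote out.
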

\begin{proof}
  This is an immediate consequence of (\ref{eq:8}) and Proposition~\ref{prop:transpose}.
\end{proof}
\begin{cor}
  \label{cor:antidual-matrix}
  For any $l\times l$\dash matrix $\alpha$ with integer entries, $\xi(\alpha)$ is autodual (antidual) if and only if $\alpha_{ji}\equiv \alpha_{ij} \mod p^{\min\{\lambda_i,\lambda_j\}}$ (respectively, $\alpha_{ji}\equiv -\alpha_{ij} \mod p^{\min\{\lambda_i,\lambda_j\}}$).
\end{cor}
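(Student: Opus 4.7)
The plan is to deduce the corollary by combining Theorem~\ref{theorem:transpose} with an explicit description of the kernel of $\partial$ in the exact sequence (\ref{eq:13}).

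First I would observe that $\xi(\alpha)$ being autodual means $\widehat{\xi(\alpha)}=\xi(\alpha)$, which by Theorem~\ref{theorem:transpose} is equivalent to $\xi(\transpose\alpha)=\xi(\alpha)$, i.e., $\transpose\alpha-\alpha \in \ker\partial$. Since $\partial$ is a homomorphism of abelian groups (being the connecting map from a $\Hom$ long exact sequence), the condition for antiduality, $\widehat{\xi(\alpha)}=-\xi(\alpha)$, similarly translates into $\transpose\alpha+\alpha\in\ker\partial$.

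The main computational step is to identify $\ker\partial$, inside integer $l\times l$ matrices representing elements of $\Hom(\Z^l,\ctorsion\lambda)$, with the set of $\beta$ such that $\beta_{ij}\equiv 0\pmod{p^{\min\{\lambda_i,\lambda_j\}}}$ for all $i,j$. By exactness of (\ref{eq:13}), $\ker\partial$ is the image of $\circ p^\lambda$. For a homomorphism $\gamma\colon\Z^l\to\ctorsion\lambda$ with matrix $(\gamma_{ij})$, the composition $\gamma\circ p^\lambda$ sends $e_i$ to $p^{\lambda_i}\gamma(e_i)$, so the $(i,j)$-entry of the resulting matrix is $p^{\lambda_i}\gamma_{ij}\bmod p^{\lambda_j}$. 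As $\gamma_{ij}$ ranges over $\Z$, these entries sweep out precisely the multiples of $p^{\lambda_i}$ modulo $p^{\lambda_j}$; a case analysis on $\lambda_i\geq\lambda_j$ versus $\lambda_i<\lambda_j$ shows that in both cases this set equals the multiples of $p^{\min\{\lambda_i,\lambda_j\}}$ modulo $p^{\lambda_j}$.

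Substituting $\beta=\transpose\alpha-\alpha$ and $\beta=\transpose\alpha+\alpha$ in turn yields the stated congruences $\alpha_{ji}\equiv\alpha_{ij}$ and $\alpha_{ji}\equiv-\alpha_{ij}\pmod{p^{\min\{\lambda_i,\lambda_j\}}}$. The only points demanding care are verifying that negation in the Baer group $E(\dtorsion\lambda,\ctorsion\lambda)$ corresponds to negation of the representing matrix (which is immediate from $\partial$ being a group homomorphism) and the uniform treatment of the two cases in the description of $\ker\partial$; neither is a serious obstacle.
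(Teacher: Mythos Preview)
Your proposal is correct and follows the same route the paper implicitly takes. The paper states the corollary without proof because the key ingredient---that $\partial(\alpha)$ depends only on $\alpha_{ij}\bmod p^{\min\{\lambda_i,\lambda_j\}}$---was already asserted right after (\ref{eq:13}); combining this with Theorem~\ref{theorem:transpose} and the definition of autodual/antidual gives the result immediately. Your argument simply makes the computation of $\ker\partial$ explicit, which is exactly what justifies that earlier assertion.
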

Let $\nabla$ be a symplectic self-duality of a finite abelian $p$\dash group $L$ with a maximal isotropic subgroup $G$.
Since $G$ is a finite $p$\dash group, there exists a unique non-decreasing sequence $\lambda=(\lambda_1,\ldots,\lambda_l)$ such that $G$ is isomorphic to $\ctorsion\lambda$.
Fixing an isomorphism $\phi:G\tilde\to\ctorsion\lambda$ gives an extension
\begin{equation}
  \xses{\xi_\phi}{\ctorsion\lambda}L{\dtorsion\lambda}fq.
\end{equation}
The above extension is equivalent to $\xi(\alpha_\phi)$ for some matrix $\alpha_\phi$ as explained above.
In what follows, an automorphism of $\ctorsion\lambda$ will be identified with an automorphism of $\Z^l$ which preserves $p^\lambda(\Z^l)$, i.e., an $l\times l$\dash matrix with integer entries and determinant $\pm 1$ such that $p^{-\lambda}\circ \sigma\circ p^\lambda$ is also an integer matrix.
\begin{lemma}
  \label{lemma:transformation}
  Let $\phi$ and $\phi'$ be two isomorphisms $G\to \ctorsion\lambda$.
  Let $\sigma=\phi'\circ \phi\inv$.
  Then $\alpha_{\phi'}=\sigma\alpha_\phi\transpose\sigma$.
\end{lemma}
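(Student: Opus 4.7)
The plan is to identify $\xi_{\phi'}$ as a Baer-group twist of $\xi_\phi$ and then transport that twist through the push-out and pull-back descriptions of $\xi(\alpha)$ established above.

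For the first step, write $f$ and $q$ for the structure maps of the extension furnished by Lemma~\ref{lemmma:max-iso}, so that $\xi_\phi$ is obtained by relabelling its endpoints via $\phi\colon G \to \ctorsion\lambda$ (on the left) and the induced $\hat\phi^{-1}\colon \hat G \to \dtorsion\lambda$ (on the right). Since $\phi' = \sigma\phi$, the left map of $\xi_{\phi'}$ equals the left map of $\xi_\phi$ post-composed with $\sigma^{-1}$, and the right map of $\xi_{\phi'}$ equals the right map of $\xi_\phi$ pre-composed with $\hat\sigma^{-1}$. In Baer-group language this reads
\[
 \xi_{\phi'} \;=\; \sigma_*\,(\hat\sigma)^*\,\xi_\phi,
\]
where $\hat\sigma\colon \dtorsion\lambda \to \dtorsion\lambda$ is identified with $\transpose\sigma$ as an integer matrix under the pairing of the excerpt.

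Next I compute how these two Baer-group operations act on an extension $\xi(\alpha)$. Because $\xi(\alpha)$ is the push-out of (\ref{eq:22}) along $\alpha$, composing the pushing map with $\sigma$ yields the push-out along $\sigma\alpha$, so $\sigma_*\xi(\alpha) = \xi(\sigma\alpha)$. For the pull-back, Proposition~\ref{prop:transpose} rewrites $\xi(\alpha) = \xi^*(\transpose\alpha)$, and pulling back the defining diagram~(\ref{eq:23}) of $\xi^*(\beta)$ along $\hat\sigma$ replaces the map $\hat\beta$ with $\hat\beta\circ\hat\sigma = \widehat{\sigma\beta}$, giving $(\hat\sigma)^*\xi^*(\beta) = \xi^*(\sigma\beta)$. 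Applying this with $\beta = \transpose\alpha$ and invoking Proposition~\ref{prop:transpose} once more yields $(\hat\sigma)^*\xi(\alpha) = \xi^*(\sigma\transpose\alpha) = \xi(\alpha\transpose\sigma)$.

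Combining the two calculations,
\[
 \xi_{\phi'} \;=\; \sigma_*(\hat\sigma)^*\xi(\alpha_\phi) \;=\; \sigma_*\xi(\alpha_\phi\transpose\sigma) \;=\; \xi(\sigma\,\alpha_\phi\,\transpose\sigma)
\]
in $E(\dtorsion\lambda,\ctorsion\lambda)$. Because the entries of each $\alpha_\phi$ are normalized to lie in $[0,\,p^{\min\{\lambda_i,\lambda_j\}})$, the exact sequence~(\ref{eq:13}) promotes this equality of extension classes to the equality of matrices $\alpha_{\phi'} = \sigma\,\alpha_\phi\,\transpose\sigma$. The only real delicacy I expect is tracking the composition directions so that $\hat\sigma$ (and not $\hat\sigma^{-1}$) ends up acting on the right side of $\alpha_\phi$; the remainder is a routine use of the naturality of push-outs and pull-backs in the Baer group.
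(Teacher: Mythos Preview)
Your argument is correct and close in spirit to the paper's: both identify $\xi_{\phi'}$ as the Baer twist $\sigma_*(\hat\sigma)^*\xi_\phi$ of $\xi_\phi$ and then read off how the matrix $\alpha_\phi$ transforms. The difference is in how the pull-back half is handled. The paper simply redraws the push-out square~(\ref{eq:22}) after simultaneously pre-composing the resolution by $\transpose\sigma$ on $\Z^l$ and post-composing by $\sigma$ on $\ctorsion\lambda$, obtaining $\bar\sigma\circ\alpha_\phi\circ\transpose\sigma$ as the new pushing map in one stroke. You instead route the pull-back computation through Proposition~\ref{prop:transpose}, converting $\xi(\alpha)$ to $\xi^*(\transpose\alpha)$, pulling back there, and converting back. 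This is a perfectly valid detour and makes the two operations pleasingly symmetric, but it invokes more machinery than necessary; the paper's single diagram is shorter. One small caveat on your last sentence: the equality $\alpha_{\phi'}=\sigma\alpha_\phi\transpose\sigma$ should be read entrywise modulo $p^{\min\{\lambda_i,\lambda_j\}}$, since $\sigma\alpha_\phi\transpose\sigma$ need not itself have entries in the normalized range---this is how the paper (and the subsequent applications) interpret it as well.
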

\begin{proof}
  Since $\xi_\phi=\xi(\alpha_\phi)$, there is a commutative diagram
  \begin{equation}
    \label{eq:35}
    \xymatrix{
      & 0 \ar[r] & \Z^l\ar[r]^{p^\lambda} \ar[d]_{\alpha_\phi} \ar@{}[dr]|\square & \Z^l \ar[r]^{p^{-\lambda}} \ar[d]^\beta & \dtorsion\lambda \ar@{=}[d] \ar[r] & 0\\
      (\xi_\phi) & 0 \ar[r] & \ctorsion\lambda \ar[d]_\sigma \ar[r]^j & L \ar@{=}[d] \ar[r]^q & \dtorsion\lambda \ar[r] & 0\\
      (\xi_{\phi'}) & 0 \ar[r] & \ctorsion\lambda \ar[r]_{j\circ \bar \sigma \inv} & L \ar[r]_{\hat \sigma \inv \circ q} & \dtorsion\lambda \ar[r]\ar[u]_{\hat \sigma}& 0
    }
  \end{equation}
  whence
  \begin{equation*}
    \xymatrix{
      & 0 \ar[r] & \Z^l \ar[r]^{p^\lambda} \ar[d]_{\bar \sigma\circ \alpha_\phi \circ \transpose{\sigma}} \ar@{}[dr]|\square & \Z^l \ar[r]^{p^{-\lambda}} \ar[d]^{\beta\circ \transpose{(\sigma^\lambda)}} & \T[p^\lambda] \ar@{=}[d] \ar[r] & 0\\
      (\xi_{\phi'}) &0 \ar[r] & \Z/p^\lambda \ar[r]_{j\circ \bar \sigma \inv} & L \ar[r]_{\hat {\bar \sigma}\inv \circ q} & \T[p^\lambda] \ar[r] & 0
    }
  \end{equation*}
  from which one may conclude that $\xi_{\phi'}=\xi(\sigma\alpha\transpose\sigma)$.
\end{proof}
\begin{defn*}
  [Bipartite matrix]
  \label{defn:bipartite-matrix}
  An $l\times l$ matrix $\alpha$ is said to be bipartite if there exists a partition $\{1,\ldots,l\}=S\coprod T$, where $S$ and $T$ are non-empty, such that $\alpha_{ij}\equiv 0\mod p^{\min\{\lambda_i,\lambda_j\}}$ if $i$ and $j$ are both in $S$ or both in $T$.
\end{defn*}
\begin{theorem}
  \label{theorem:finite-standard-triple}
  Let $\nabla$ be a symplectic self-duality on a finite abelian $p$\dash group $L$.
  Let $G$ be a maximal isotropic subgroup of $L$.
  Consider the following statements
  \begin{enumerate}
  \item \label{item:5}The triple $(L,\nabla,G)$ is  standard.
  \item \label{item:6}There exists an isomorphism $\phi:G\to \ctorsion\lambda$ such that $\xi_\phi=\xi(\alpha)$ for a bipartite matrix $\alpha$.
  \end{enumerate}
  Then \ref{item:5} implies \ref{item:6}.
  If $p\neq 2$ then \ref{item:5} and \ref{item:6} are equivalent.
\end{theorem}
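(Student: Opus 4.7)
The plan is to reduce Theorem~\ref{theorem:finite-standard-triple} to Theorem~\ref{theorem:standard-triples} via a dictionary between bipartiteness of $\alpha_\phi$ and the splitness of the extensions $\xi_{11}$ and $\xi_{22}$ that appear there. Fix an isomorphism $\phi: G \to \ctorsion\lambda$, so that $\xi_\phi = \xi(\alpha)$ for a matrix $\alpha = \alpha_\phi$. Any partition $\{1,\ldots,l\} = S \sqcup T$ with $S, T$ non-empty induces a product decomposition $G = G_1 \times G_2$ via $\phi$, with $G_1 \cong \ctorsion{\lambda_S}$ and $G_2 \cong \ctorsion{\lambda_T}$; and conversely, any product decomposition $G = G_1 \times G_2$ arises from such a partition after a suitable choice of $\phi$, since by Krull--Schmidt for finite abelian $p$-groups the sequences $\lambda_S, \lambda_T$ are pinned down up to reordering. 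The key claim I would establish first is that, with respect to this decomposition, the diagonal extensions $\xi_{11}$ and $\xi_{22}$ produced by diagram~(\ref{eq:4}) coincide with $\xi(\alpha_{SS})$ and $\xi(\alpha_{TT})$, where $\alpha_{SS}, \alpha_{TT}$ are the diagonal blocks of $\alpha$ for the partition.

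Granting the claim, the exact sequence~(\ref{eq:13}) identifies the Baer class of $\xi(\alpha_{SS})$ with the image of $\alpha_{SS}$ modulo the image of composition with $p^{\lambda_S}$. Consequently, $\xi_{11}$ is split exactly when $\alpha_{ij} \equiv 0 \bmod p^{\min\{\lambda_i,\lambda_j\}}$ for all $i,j \in S$; similarly for $\xi_{22}$. So the simultaneous splitness of $\xi_{11}$ and $\xi_{22}$ is precisely the condition that $\alpha$ is bipartite with respect to the partition $S \sqcup T$.

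With this dictionary in hand, the theorem follows by transporting both directions of Theorem~\ref{theorem:standard-triples}. For the forward implication, standardness of $(L,\nabla,G)$ yields a decomposition $G = G_1 \times G_2$ with $\xi_{11}, \xi_{22}$ split; the resulting partition, together with a compatible choice of $\phi$ built by concatenating isomorphisms $G_1 \cong \ctorsion{\lambda_S}$ and $G_2 \cong \ctorsion{\lambda_T}$, makes $\alpha_\phi$ bipartite. For the converse under the hypothesis $p \neq 2$, the bipartite structure of $\alpha_\phi$ provides a decomposition $G = G_1 \times G_2$ for which the dictionary forces $\xi_{11}$ and $\xi_{22}$ to split; since $G$ is a finite $p$-group with $p$ odd, multiplication by $2$ is an automorphism of $G$, so $G$ is $2$-regular, and Theorem~\ref{theorem:standard-triples} delivers standardness.

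The main technical hurdle is the identification $\xi_{ii} = \xi(\alpha_{ii})$. This requires a diagram chase combining the pushout definition of $\xi(\alpha)$ in~(\ref{eq:22}) with the iterated pushout--pullback construction of~(\ref{eq:4}), showing that projecting $\ctorsion\lambda \to \ctorsion{\lambda_S}$ and pulling back along $\dtorsion{\lambda_S} \hookrightarrow \dtorsion\lambda$ restricts the matrix representative of $\xi(\alpha)$ precisely to its $S\times S$ block; the analogous functoriality is the one underlying the standard isomorphism $E(\hat G,G) \cong \prod_{i,j} E(\hat G_j, G_i)$ cited in Section~\ref{sec:extensions-products}. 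Once this is established, everything else is formal, and both directions of the theorem become routine applications of Theorem~\ref{theorem:standard-triples}.
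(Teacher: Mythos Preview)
Your proposal is correct and follows essentially the same approach as the paper: the paper's proof is a single sentence asserting that condition~\ref{item:4} of Theorem~\ref{theorem:standard-triples} is, in the finite case, equivalent to $\alpha$ being bipartite for some choice of $\phi$, and then invokes Theorem~\ref{theorem:standard-triples}. You have simply unpacked that equivalence in detail---the identification $\xi_{ii}=\xi(\alpha_{ii})$ via the functoriality of~(\ref{eq:4}) and~(\ref{eq:22}), the use of~(\ref{eq:13}) to read off splitness from the vanishing of the diagonal blocks, and the Krull--Schmidt argument to pass between arbitrary product decompositions of $G$ and coordinate partitions---which is exactly what the paper leaves implicit.
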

\begin{proof}
  In the finite case, the condition \ref{item:4} is equivalent to $\alpha$ being bipartite for some isomorphism $\phi:G\to \ctorsion\lambda$.
  Therefore, Theorem~\ref{theorem:finite-standard-triple} follows from Theorem~\ref{theorem:standard-triples}.
\end{proof}
\begin{theorem}
  [Example of a non-standard triple]
  \label{theorem:non-standard-triple}
  Let $p$ be an odd prime, and let $N\geq s\geq 4$ be two integers.
  Let $\lambda=(p^{sN},p^{(s-1)N},\ldots,p^N)$.
  Let
  \begin{equation*}
    \alpha=
    \left(
      \begin{array}{cccccc}
        0 & p^{s-2} & p^{s-3} & \cdots & p & 1\\
        -p^{s-2} & 0 & p^{s-4} & \cdots & 1 & 0\\
        -p^{s-3} & -p^{s-4} & 0 & \cdots & 0 & 0\\
        \vdots & \vdots & \vdots & \ddots & \vdots & \vdots\\
        -p & -1 & 0 & \cdots & 0 & 0\\
        -1 & 0 & 0 & \cdots & 0 & 0
      \end{array}
     \right).
  \end{equation*}
  Let $\xi(\alpha)$ be the extension over $\dtorsion\lambda$ with kernel $\ctorsion\lambda$ as in (\ref{eq:22}).
  Suppose that $\nabla$ is a symplectic self-duality of $L$ for which $\ctorsion\lambda$ is a maximal isotropic subgroup ($\nabla$ exists by Theorem~\ref{theorem:anti-dual-extensions}).
  Then $(L, \nabla, \ctorsion\lambda)$ is not a standard triple.
\end{theorem}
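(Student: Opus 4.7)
The strategy is to reduce standardness of the triple to a matrix-reduction problem via Theorem~\ref{theorem:finite-standard-triple}, and then obstruct that problem by a direct $p$-adic valuation count.

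Since $p$ is odd, Theorem~\ref{theorem:finite-standard-triple} says that $(L, \nabla, \ctorsion\lambda)$ is standard if and only if some isomorphism $\phi: \ctorsion\lambda \to \ctorsion\lambda$ produces a bipartite representative for $\xi_\phi$. Combined with Lemma~\ref{lemma:transformation}, this is equivalent to the existence of $\sigma \in \Aut(\ctorsion\lambda)$ and a nontrivial partition $\{1, \ldots, s\} = S \sqcup T$ such that $(\sigma\alpha\transpose\sigma)_{ij} \equiv 0 \pmod{p^{\min(\lambda_i, \lambda_j)}}$ whenever $i \neq j$ lie in the same part. I would assume such $\sigma$ and partition exist and derive a contradiction.

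I would then identify $\Aut(\ctorsion\lambda)$ with the set of $s \times s$ integer matrices of determinant $\pm 1$ satisfying $\sigma_{ij} \equiv 0 \pmod{p^{(j - i)N}}$ for $j > i$ (using $\lambda_i - \lambda_j = (j-i)N$ in our setup), so that in particular each diagonal entry $\sigma_{ii}$ is a unit modulo $p$. The heart of the argument is a $p$-adic valuation analysis of $(\sigma\alpha\transpose\sigma)_{ij} = \sum_{k,l} \sigma_{ik}\alpha_{kl}\sigma_{jl}$: noting that $v_p(\alpha_{kl}) = s - k - l + 1$ when $k + l \leq s + 1$ and $k \neq l$ (and $\alpha_{kl} = 0$ otherwise), a short case analysis shows that for $i < j$ with $i + j \leq s + 1$, the summand with $(k, l) = (i, j)$ has valuation exactly $s - i - j + 1$, while every other summand has strictly greater $p$-adic valuation. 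Hence $v_p\bigl((\sigma\alpha\transpose\sigma)_{ij}\bigr) = s - i - j + 1$.

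Using the hypothesis $N \geq s$, a direct comparison yields $\min(\lambda_i, \lambda_j) = (s + 1 - j)N > s - i - j + 1$ for every $i < j$ with $i + j \leq s + 1$ and $j \leq s$. Thus bipartiteness forces all such pairs $(i, j)$ to be separated by $(S, T)$. But this family contains the triangle $(1, 2), (1, 3), (2, 3)$ (using $s \geq 4$ to ensure $2 + 3 \leq s + 1$), and a triangle is not $2$-colorable. This contradiction shows the triple is non-standard.

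The main technical obstacle is the valuation bookkeeping: one must rule out any conspiracy among the off-dominant terms that could lower the valuation of the sum below $s - i - j + 1$. The geometric $p$-adic decay in the entries of $\alpha$, together with the large gap $N \geq s$ in the divisibility conditions on $\sigma$, is precisely what forces the diagonal contribution $\sigma_{ii}\alpha_{ij}\sigma_{jj}$ to dominate; shrinking either parameter would make the combinatorial triangle argument collapse.
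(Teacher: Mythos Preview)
Your argument is correct and follows the same route as the paper: reduce via Theorem~\ref{theorem:finite-standard-triple} and Lemma~\ref{lemma:transformation} to showing that no $\sigma\in\Aut(\ctorsion\lambda)$ makes $\sigma\alpha\transpose\sigma$ bipartite, then show that the $p$\dash adic valuations of the entries on or above the skew-diagonal are unchanged by $\alpha\mapsto\sigma\alpha\transpose\sigma$, and finally exhibit a combinatorial obstruction to $2$\dash colorability.

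The only substantive differences are cosmetic. The paper phrases the invariance step in terms of the elementary row–column operations $\beta_{ia}$, $\pi_{ij}$, $\theta_{ija}$ generating the $\Aut(\ctorsion\lambda)$\dash action, and then checks directly that $\alpha$ itself is not bipartite (using that the first row forces $\{2,\dots,s\}$ into one block, while a nonzero entry in row~$2$ forbids this). You instead expand $(\sigma\alpha\transpose\sigma)_{ij}$ and isolate the dominant summand, then invoke the triangle $(1,2),(1,3),(2,3)$. Both obstructions use $s\geq 4$ in the same way. One minor point: you read $\lambda_i-\lambda_j=(j-i)N$, which does not literally match the stated $\lambda_i=p^{(s+1-i)N}$; however your divisibility constraint on $\sigma_{ij}$ is \emph{weaker} than the true one, so your conclusion for the larger class of $\sigma$ a fortiori covers $\Aut(\ctorsion\lambda)$, and the argument stands.
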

\begin{proof}
  By Theorem~\ref{theorem:finite-standard-triple} and Lemma~\ref{lemma:transformation} it suffices to show that there does not exist any automorphism $\sigma$ of $\ctorsion\lambda$ such that $\sigma\alpha\transpose\sigma$ is bipartite.
  The transformations $\alpha\mapsto \sigma\alpha\transpose\sigma$ can all be achieved by a sequence of row and column operations of the following types:
  \begin{enumerate}
  \item [$\beta_{ia}$: ] multiplication of the $i$th row and $i$th column by an integer $a$ such that $(a,p)=1$,
  \item [$\pi_{ij}$: ] interchange of $i$th and $j$th rows and columns when $\lambda_i=\lambda_j$,
  \item [$\theta_{ija}$: ] addition of $a$ times the $j$th column to the $i$th column, followed by addition of $a$ times the $j$th row to the $i$th row when $p^{\max\{0,\lambda_i-\lambda_j\}}$ divides $a$.
  \end{enumerate}
  None of these operations can change the valuation of any entry of $\alpha$ that lies on or above the skew-diagonal.
  In particular, $\alpha$ is sparser than any matrix that can be obtained from it by transformations of the above type.
  Therefore, it suffices to show that $\alpha$ itself is not bipartite.
  For any partition of $\{1,\ldots,s\}=S\coprod T$ as in Definition~\ref{defn:bipartite-matrix},
  one may assume, without loss of generality that $1\in S$.
  Since all but the first entry of the first row are non-zero, $\{2,\ldots,s\}\subset T$.
  But both $2$ and $s$ can not be in $T$, since the entry in the second row and  $s$th column is $1$.
  Therefore no such partition can exist, so $\alpha$ is not bipartite.
\end{proof}
\begin{theorem}
  \label{theorem:homogeneous}
  Let $\nabla$ be a symplectic self-duality on a finite abelian group $L$.
  If $G$ is a maximal isotropic subgroup isomorphic to $(\Z/p^k\Z)^l$ for some positive integers $k$ and $l$ and some odd prime $p$, then $(L,\nabla,G)$ is a standard triple.
\end{theorem}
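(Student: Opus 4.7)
The plan is to reduce the theorem to a statement of matrix linear algebra via Theorem~\ref{theorem:finite-standard-triple}. Since $p$ is odd, and $L$ is a finite abelian $p$-group with $G \cong \ctorsion{k}^{\,l}$ maximal isotropic, it suffices to exhibit an isomorphism $\phi\colon G\to\ctorsion\lambda$ (with $\lambda=(k,\ldots,k)$) such that the associated matrix $\alpha_\phi$ is bipartite. Fix any starting isomorphism $\phi_0$, yielding a matrix $\alpha=\alpha_{\phi_0}\in M_l(\Z/p^k\Z)$. Because $\nabla$ is symplectic, Theorem~\ref{theorem:anti-dual-extensions} and Corollary~\ref{cor:antidual-matrix} tell us that $\alpha$ is skew-symmetric modulo $p^k$. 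By Lemma~\ref{lemma:transformation}, allowing $\phi$ to vary replaces $\alpha$ by $\sigma\alpha\transpose\sigma$ for an arbitrary $\sigma\in\Aut(\ctorsion\lambda)=GL_l(\Z/p^k\Z)$.

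So the whole problem is now: given a skew-symmetric $\alpha\in M_l(\Z/p^k\Z)$, produce $\sigma\in GL_l(\Z/p^k\Z)$ such that $\sigma\alpha\transpose\sigma$ is bipartite. I would prove a stronger statement: such an $\alpha$ can be brought by $\sigma\alpha\transpose\sigma$ to a block-diagonal form in which each block is either a zero block or a $2\times 2$ skew block $\bigl(\begin{smallmatrix}0 & p^v u\\ -p^v u & 0\end{smallmatrix}\bigr)$ with $u$ a unit. After placing blocks along the diagonal, one takes $S$ to be the odd indices and $T$ the even indices to obtain a bipartite partition, so long as $l\ge 2$.

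The block reduction is a classical symplectic-normal-form argument adapted to $\Z/p^k\Z$, carried out by induction on $l$. The inductive step is the crux: if $\alpha\neq 0$, pick an entry $\alpha_{ij}$ of minimum $p$-adic valuation $v<k$; after conjugating by a permutation put this pivot at position $(1,2)$, say $\alpha_{1,2}=p^v u$ with $u$ a unit. For each $a\ge 3$, the entries $\alpha_{a,2}$ and $\alpha_{a,1}$ have valuation $\ge v$, so the congruence transformations $\sigma=I+cE_{a,1}$ (add row/column $1$ into row/column $a$) and $\sigma=I+dE_{a,2}$ (add row/column $2$ into row/column $a$) admit unique choices of $c,d\in\Z/p^k\Z$ modulo $p^{k-v}$ that simultaneously annihilate $\alpha_{a,2},\alpha_{2,a}$ and then $\alpha_{a,1},\alpha_{1,a}$. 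Skew-symmetry is preserved throughout, the $(1,2)$ block is untouched, and what remains is a skew-symmetric matrix of size $l-2$ on which the induction runs. The main technical point to check carefully is that clearing column $2$ before column $1$ does not disturb the just-cleared entries, which follows from $\alpha_{1,1}=\alpha_{2,2}=0$ and the vanishing of $\alpha_{2,a}$ after the first step.

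The case $l=1$ must be handled directly, since no bipartite partition of a singleton exists. But when $l=1$, the only skew-symmetric $1\times 1$ matrix is $0$, so $\xi$ is the split extension and $L\cong G\times\widehat G$ with $G$ as its first factor; the triple $(L,\nabla,G)$ is then manifestly standard, for instance by Lemmas~\ref{lemma:polarization-implies-standard} and~\ref{lemma:split-maximal-isotropics} applied to the evident complement of $G$. The main obstacle I anticipate is purely bookkeeping: verifying that the pivot-clearing operations lie in $GL_l(\Z/p^k\Z)$ (they do, being products of transvections and scalar-unit diagonals) and that the valuation of the pivot is preserved throughout the induction, so that after finitely many steps one ends at a block-diagonal matrix whose bipartite structure is visible by inspection.
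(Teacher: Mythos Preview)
Your proposal is correct and follows essentially the same route as the paper: reduce via Theorem~\ref{theorem:finite-standard-triple} to showing that the skew-symmetric matrix $\alpha\in M_l(\Z/p^k\Z)$ can be made bipartite under $\alpha\mapsto\sigma\alpha\transpose\sigma$, then pick a minimal-valuation pivot, move it to position $(1,2)$, clear the first two rows and columns by admissible congruence transvections, and recurse on the remaining $(l-2)\times(l-2)$ block, arriving at the odd/even bipartition. Your explicit treatment of the $l=1$ case (where skew-symmetry forces $\alpha=0$ since $p$ is odd) fills in what the paper leaves implicit with ``we may assume that $l\geq 2$''.
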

\begin{proof}
  Fix an isomorphism of $G$ with $(\Z/p^k\Z)^l$.  Then
  $L$ corresponds to the extension $\xi(\alpha)$, where $\alpha_{ij}+ \alpha_{ji}\equiv 0\mod p^k$  for all $i$, $j$.
  Using the row and column operations in the proof of Theorem~\ref{theorem:non-standard-triple} we will reduce $\alpha$ to a bipartite matrix.
  We may assume that $l\geq 2$.
  Find an entry of $\alpha$ with lowest valuation at $p$.
  By interchanging rows and columns, bring this entry to the $(1,2)$th position.
  Use it to clear out all the other entries in the first row and in the second column (and therefore, by skew-symmetry, all the entries in the second row and in the first column) by operations of the type $\theta_{ija}$.  These operations are admissible since $\lambda_i=\lambda_j$.

  Repeat the above process with the $(l-2)\times (l-2)$ submatrix in the bottom right corner (this does not disturb the already cleared rows and columns) to eventually obtain a bipartite matrix when $S$ and $T$ are taken to be the even and odd numbers in $\{1,\ldots,l\}$ respectively.
\end{proof}
\begin{theorem}
  \label{theorem:Z_p^l}
  Let $\nabla$ be a symplectic self-duality on a \lca{} $L$.
  If $G$ is a compact open maximal isotropic subgroup isomorphic to $\Z_p^l$ for some odd prime $p$ and some positive integer $l$ then $(L,\nabla,G)$ is a standard triple.
\end{theorem}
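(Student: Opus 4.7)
The plan is to apply the homological criterion Theorem~\ref{theorem:standard-triples} after promoting the matrix machinery of Section~\ref{sec:finite} from the finite $p$-groups $\ctorsion\lambda$ to the profinite group $\Z_p^l$. Since $p$ is odd, $G=\Z_p^l$ is $2$-regular, so by Theorem~\ref{theorem:standard-triples} it suffices to exhibit a decomposition $G=G_1\times G_2$ for which the induced extensions $\xi_{11}$ and $\xi_{22}$ of diagram~(\ref{eq:4}) both split.

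First I would extend the matrix classification of Section~\ref{sec:finite} to $\Z_p^l$. By Lemma~\ref{lemmma:max-iso} the self-duality $\nabla$ realises $L$ as an antidual extension $\xi:0\to G\to L\to\hat G\to 0$ with $\hat G=(\Q_p/\Z_p)^l$. For each $k\geq 1$, pulling $\xi$ back along the $p^k$-torsion inclusion $(\Z/p^k\Z)^l\hookrightarrow\hat G$ and pushing out along $G\twoheadrightarrow G/p^kG\cong(\Z/p^k\Z)^l$ yields a finite extension classified, as in (\ref{eq:13}) with $\lambda=(k,\ldots,k)$, by some $\alpha_k\in M_l(\Z/p^k\Z)$. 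The compatibility $\alpha_{k+1}\equiv\alpha_k\pmod{p^k}$ assembles these into a single matrix $\alpha\in M_l(\Z_p)$. Theorem~\ref{theorem:transpose}, applied at every level, translates antiduality of $\xi$ into the antisymmetry $\alpha+\transpose\alpha=0$, and the argument of Lemma~\ref{lemma:transformation} transports verbatim: a change of identification $G\to\Z_p^l$ by $\sigma\in\Aut(\Z_p^l)=GL_l(\Z_p)$ replaces $\alpha$ by $\sigma\alpha\transpose\sigma$. Finally, for a partition $\{1,\ldots,l\}=S\sqcup T$ with corresponding product decomposition $G=\Z_p^S\times\Z_p^T$, the extensions $\xi_{11}$ and $\xi_{22}$ split precisely when the $S\times S$ and $T\times T$ blocks of $\alpha$ vanish, i.e.\ when $\alpha$ is bipartite in the sense of Definition~\ref{defn:bipartite-matrix}.

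It then remains to produce $\sigma\in GL_l(\Z_p)$ such that $\sigma\alpha\transpose\sigma$ is bipartite, and for this I would adapt the reduction in the proof of Theorem~\ref{theorem:homogeneous}. Pick an entry of $\alpha$ of minimum $p$-adic valuation and, by a permutation $\sigma\in GL_l(\Z_p)$, move it to position $(1,2)$; antisymmetry then places its negative at $(2,1)$. Since $\alpha_{12}$ has the smallest valuation, every $\alpha_{1j}$ with $j\geq 3$ lies in $\alpha_{12}\Z_p$, so a transvection $I+aE_{j,2}$ with $a\in\Z_p$ (automatically in $GL_l(\Z_p)$) clears $\alpha_{1j}$ together with, by antisymmetry, $\alpha_{j1}$; a symmetric transvection clears $\alpha_{2j}$ and $\alpha_{j2}$. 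Iterating on the $(l-2)\times(l-2)$ bottom-right block yields, after $\lfloor l/2\rfloor$ rounds, a matrix whose nonzero entries lie only in pairs $\{(2i-1,2i),(2i,2i-1)\}$; taking $S$ to consist of the odd indices and $T$ of the even indices (absorbing the unpaired index into $T$ when $l$ is odd) displays the required bipartite form. Theorem~\ref{theorem:standard-triples} then delivers standardness of $(L,\nabla,G)$.

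The principal obstacle is the bookkeeping of the first step rather than the matrix reduction itself: one must verify that the classification of extensions via $\alpha\in M_l(\Z_p)$, the transposition rule of Theorem~\ref{theorem:transpose}, the $GL_l(\Z_p)$-transformation law of Lemma~\ref{lemma:transformation}, and the bipartiteness-splitting dictionary all survive the passage from the finite quotients to $\Z_p^l$ intact. Once that is in hand, the matrix reduction is a direct translation of Theorem~\ref{theorem:homogeneous}, and is if anything cleaner because no constraint of the form $p^{\max\{0,\lambda_i-\lambda_j\}}\mid a$ arises when all coordinate ``exponents'' are uniformly infinite.
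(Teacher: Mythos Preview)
Your proposal is correct and follows essentially the same approach as the paper: both identify $E(\hat G,G)$ with $M_l(\Z_p)$ via an inverse limit over the finite levels $G_k=G/p^kG$, record the $GL_l(\Z_p)$-congruence action $\alpha\mapsto\sigma\alpha\transpose\sigma$, and then invoke the matrix reduction of Theorem~\ref{theorem:homogeneous} verbatim. The only cosmetic difference is that the paper passes to the limit of $E(\widehat{G_k},G)$ with $G=\Z_p^l$ kept intact, whereas you additionally push out to $E(\widehat{G_k},G_k)$; since both groups are canonically $M_l(\Z/p^k\Z)$ this is immaterial.
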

\begin{proof}
 Choose an isomorphism $\phi:G\rightarrow \Z_p^l$.  We claim that a $\xi\in E(\hat{G},G)$ can be uniquely represented by an $l\times l$ matrix $\alpha\in M_l(\Z_p)$ using $\phi$.  Furthermore, a different choice of $\phi$ results in a different matrix, namely $\sigma \alpha\transpose\sigma$, for some $\sigma\in GL_l(\Z_p)$.

 Indeed, $E(\hat{G},G)=E(\varinjlim\widehat{G_k},G)$, where $G_k=G/p^kG$, so that $E(\hat{G},G)=\varprojlim E(\widehat{G_k},G)$.
 Using a projective resolution of $(\Z/p^k\Z)^l$ as at the beginning of this section, $\phi$ can be used to identify $E(\widehat{G_k},G)$ with $M_l(\Z/p^k\Z)$ and the claim follows.  Now the rest of the proof of Theorem~\ref{theorem:homogeneous} goes through verbatim.
  \end{proof}
\section{Topological torsion groups, divisible groups, non-standard pairs}
\label{sec:topol-tors-groups}
We recall some basic facts about the primary decomposition of a topological torsion group, referring the reader to \cite[Chapters~2 and~3]{0509.22003} for further details.
For each prime $p$, a topological torsion group $L$ has a unique closed subgroup $L_p$ which is a topological $p$\dash group such that $L$ is the restricted product $\rdp_p(L_p,G_p)$ for some compact open subgroups $G_p$ of $L_p$.  Recall that the restricted product $\rdp_p(L_p,G_p)$ is defined to be the subgroup of $\prod_p L_p$ that consists of sequences $\{x_p\}$ with all but finitely many $x_p$ in $G_p$.  Thus
it is enough to specify $G_p$ for all but finitely many primes $p$.
If $L'$ is another topological torsion group with the primary decomposition $\rdp_p(L_p',G_p')$ then any map between $L$ and $L'$ respects the decompositions, in particular any isomorphism $\phi:L\to L'$ maps $L_p$ isomorphically onto $L'_p$ for all $p$ in such a way that $\phi(G_p)=G'_p$ for all but finitely many $p$.
Conversely, given isomorphisms $\phi_p:L_p\to L'_p$ such that $\phi(G_p)=G'_p$ for all but finitely many $p$, there exists a unique isomorphism $\phi:L\to L'$ whose restriction to $L_p$ is $\phi_p$.
\begin{theorem}
  \label{theorem:toptor}
  Let $\nabla$ be a symplectic self-duality of a topological torsion group $L$.
  Let $L=\rdp_p(L_p,G_p)$ be the primary decomposition of $L$.
  Then
  \begin{enumerate}
  \item \label{item:8} The restriction $\nabla_p$ of $\nabla$ to $L_p$ takes values in $\widehat{L_p}$ and is a symplectic self-duality of $L_p$.
  \item \label{item:9} The compact open subgroup $G_p$ is a maximal isotropic subgroup of $L_p$ for all but finitely many $p$.
  \item \label{item:7} The pair $(L,\nabla)$ is  standard  if and only if $(L_p,\nabla_p)$ is a standard pair for each $p$ and $(L_p,\nabla_p,G_p)$ is a standard triple for all but finitely many $p$.
  \end{enumerate}
\end{theorem}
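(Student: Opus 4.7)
The plan for parts \ref{item:8} and \ref{item:9} is to exploit the functoriality recalled at the start of the section: any continuous homomorphism between topological torsion groups respects primary decompositions, and any isomorphism matches the specified compact open subgroups at all but finitely many primes. Since Pontryagin duality interchanges restricted products with their duals, $\hat L = \rdp_p(\widehat{L_p}, G_p^\perp)$, where $G_p^\perp$ denotes the annihilator of $G_p$ in $\widehat{L_p}$. Applying the functoriality to $\nabla:L\to \hat L$, its restriction to $L_p$ lands in $\widehat{L_p}$ and is an isomorphism, yielding $\nabla_p$, and the identity $\nabla_p(x)(x)=\nabla(x)(x)=0$ for $x\in L_p$ shows $\nabla_p$ is symplectic. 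For \ref{item:9}, the same fact gives $\nabla(G_p)=G_p^\perp$ for all but finitely many $p$, which is precisely the statement that $G_p$ is maximal isotropic in $L_p$ for those primes.

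For the forward implication of \ref{item:7}, assume $(L,\nabla)\cong (A\times \hat A,\nabla^0)$ for some \lca{} $A$. As a direct factor of the topological torsion group $L$, $A$ is itself a topological torsion group, so $A=\rdp_p(A_p,B_p)$. Then $\hat A=\rdp_p(\widehat{A_p},B_p^\perp)$, so $A\times \hat A$ has primary decomposition $\rdp_p(A_p\times \widehat{A_p},B_p\times B_p^\perp)$ with the standard self-duality restricting to the standard self-duality $\nabla_p^0$ of $A_p\times \widehat{A_p}$ on each $p$-component. Uniqueness of primary decomposition then yields $(L_p,\nabla_p)\cong (A_p\times \widehat{A_p},\nabla_p^0)$, a standard pair. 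Moreover, the isomorphism $L\cong A\times \hat A$ identifies $G_p$ with $B_p\times B_p^\perp$ for all but finitely many $p$, whence $(L_p,\nabla_p,G_p)$ is a standard triple for almost all $p$.

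For the reverse implication, the plan is to assemble local isomorphisms into a global one. Let $S$ be the finite set of primes at which $(L_p,\nabla_p,G_p)$ is not known to be a standard triple. For $p\notin S$, fix an isomorphism of triples $\psi_p:(L_p,\nabla_p,G_p)\to (A_p\times \widehat{A_p},\nabla_p^0,B_p\times B_p^\perp)$. For $p\in S$, fix only an isomorphism of pairs $\psi_p:(L_p,\nabla_p)\to (A_p\times \widehat{A_p},\nabla_p^0)$, which exists by hypothesis, and choose any compact open subgroup $B_p$ of $A_p$. Setting $A=\rdp_p(A_p,B_p)$, one has $\hat A=\rdp_p(\widehat{A_p},B_p^\perp)$, so that $A\times \hat A=\rdp_p(A_p\times \widehat{A_p},B_p\times B_p^\perp)$. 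Since $\psi_p(G_p)=B_p\times B_p^\perp$ for all $p\notin S$, the functoriality recalled above glues the $\psi_p$'s into a global isomorphism $L\to A\times \hat A$ that carries $\nabla$ to $\nabla^0$.

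The only delicate point is the handling of the exceptional primes in $S$: one must check that the arbitrary choice of $B_p$ there does not obstruct the global assembly. This is guaranteed precisely because the restricted product construction is insensitive to modifications of the compact open subgroups at finitely many primes, so the $\psi_p$ at $p\in S$ only needs to be an isomorphism of topological groups (with no compatibility at the level of compact open subgroups) for the pasting theorem to apply.
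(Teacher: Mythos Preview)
Your proposal is correct and follows essentially the same route as the paper's proof: both use that $\hat L=\rdp_p(\widehat{L_p},G_p^\perp)$ together with functoriality of the primary decomposition to obtain \ref{item:8} and \ref{item:9}, and both prove \ref{item:7} by passing to the primary decomposition of $A$ and gluing the local isomorphisms via the restricted product. Your treatment of the exceptional set $S$ and the arbitrary choice of $B_p$ there is slightly more explicit than the paper's, but the argument is the same.
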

\begin{proof}
  Since the primary decomposition of $\hat{L}$ is $\rdp_p(\widehat{L_p},G_p^\perp)$ we see that \ref{item:8} follows.
  Furthermore, for all but finitely many $p$ the map $\nabla_p: G_p\rightarrow G_p^\perp$ is an isomorphism, so that \ref{item:9} follows.

  It remains to prove \ref{item:7}.
  Suppose that $\phi:A\times \hat A\to L$ is an isomorphism that takes the standard symplectic self-duality to $\nabla$.
  Note that the induced symplectic self-duality on $A_p\times \widehat{A_p}$ is also standard.
  Let $\rdp(A_p,B_p)$ be the primary decomposition of $A$.
  Then $\rdp(A_p\times\widehat{A_p},B_p\times B_p^\perp)$ is the primary decomposition of $A\times \hat A$.
  Therefore the isomorphisms $\phi_p:A_p\times \widehat A_p\to L_p$ ensure that $(L_p,\nabla_p)$ is a standard pair for all $p$ and that $(L_p,\nabla_p,G_p)$ a standard triple for all but finitely many $p$.

  Conversely, suppose that $(L_p,\nabla_p)$ is a standard pair for all $p$ and  $(L_p,\nabla_p,G_p)$ is a standard triple for all but finitely many $p$.
  Then there exist topological $p$\dash groups $A_p$ and isomorphisms $\phi_p:A_p\times \widehat{A_p}$ which take the standard symplectic self-duality on $A_p\times \widehat{A_p}$ to $\nabla_p$ for all primes $p$.
  Furthermore, for all but finitely many $p$, there exist compact open subgroups $B_p$ of $A_p$ such that that $\phi_p(B_p\times B_p^\perp)=G_p$.
  Set $A=\rdp(A_p,B_p)$.
  Then $\hat A=\rdp(\widehat{A_p},B_p^\perp)$.
  The isomorphism $\phi: A\times\hat{A}\rightarrow L$ whose restriction to $A_p\times \widehat{A_p}$ is $\phi_p$ takes the standard symplectic form on $A\times \hat{A}$ to $\nabla$, thereby demonstrating the standardness of $(L,\nabla)$.
\end{proof}
\begin{theorem}
  [Non-standard pairs]
  \label{theorem:non-standard-pair}
  For each odd prime $p$, let $(L_p,\nabla_p,G_p)$ be a triple from Theorem~\ref{theorem:non-standard-triple}.
  Let $L=\rdp(L_p,G_p)$ be a restricted direct product over all odd primes.
  Then $\nabla$ is a symplectic self-duality of $L$ such that $(L,\nabla)$ is not a standard pair.
  In fact, $L$ is not isomorphic to $A\times \hat A$ for any \lca{} $A$.
  \end{theorem}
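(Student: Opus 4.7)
The plan is to construct $\nabla$ from the $\nabla_p$'s via the primary-decomposition formalism, derive the first assertion from Theorem~\ref{theorem:toptor}, and then rule out any isomorphism $L\cong A\times\hat A$ by a mild strengthening of the matrix argument in the proof of Theorem~\ref{theorem:non-standard-triple}. The self-duality $\nabla$ on $L$ is defined by $\nabla((x_p))((y_p))=\sum_p\nabla_p(x_p)(y_p)$; the sum is finite because each $G_p$ is $\nabla_p$-isotropic and the entries $x_p,y_p$ lie in $G_p$ for all but finitely many $p$. By Theorem~\ref{theorem:toptor}(\ref{item:7}), standardness of $(L,\nabla)$ would require $(L_p,\nabla_p,G_p)$ to be a standard triple for all but finitely many $p$, and Theorem~\ref{theorem:non-standard-triple} excludes this for every odd prime $p$. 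Hence $(L,\nabla)$ is not a standard pair.

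For the stronger claim, I would suppose $L\cong A\times\hat A$ and derive a contradiction. Matching primary decompositions of the two sides forces $L_p\cong A_p\times\widehat{A_p}$ for each $p$, and the essentially unique matching of compact open subgroups in the restricted-product decomposition forces, for all but finitely many $p$, the subgroup $G_p$ to correspond to $B_p\times B_p^\perp$ for some closed $B_p\subset A_p$. Pulling back the standard symplectic self-duality of $A_p\times\widehat{A_p}$ along this isomorphism yields a symplectic self-duality $\nabla'_p$ on $L_p$ under which $G_p$ is maximal isotropic and for which $(L_p,\nabla'_p,G_p)$ is a standard triple. It therefore suffices to prove that, for the $(L_p,G_p)$ constructed in Theorem~\ref{theorem:non-standard-triple}, \emph{no} symplectic self-duality on $L_p$ making $G_p$ maximal isotropic produces a standard triple.

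This last reduction is the heart of the argument and the main anticipated obstacle. Varying the symplectic self-duality on $L_p$ while keeping $G_p$ maximal isotropic corresponds, via Pontryagin duality and the formalism of Section~\ref{sec:finite}, to enlarging the one-sided orbit $\{\sigma\alpha\transpose\sigma\}$ appearing in Lemma~\ref{lemma:transformation} to the two-sided orbit $\{\sigma\alpha\transpose\tau:\sigma,\tau\in\Aut(\ctorsion\lambda)\}$. I would extend the valuation calculation in the proof of Theorem~\ref{theorem:non-standard-triple} by checking that each admissible $\beta_{i,a}$, $\pi_{i,j}$ or $\theta_{i,j,a}$ applied to rows alone (or to columns alone) still preserves the $p$-adic valuation of every entry of $\alpha$ lying on or above the skew-diagonal. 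The key is that with $\lambda_i=(s+1-i)N$ and $N\geq s$, the admissibility constraint $p^{\max\{0,\lambda_i-\lambda_j\}}\mid a$, combined with the explicit nonvanishing pattern of $\alpha$, forces the strict inequality $v(\alpha_{ik})<v(a\alpha_{jk})$ in every potentially cancelling row addition, and symmetrically for columns; so no cancellation can change a skew-diagonal valuation. With this invariance the combinatorial partition argument from Theorem~\ref{theorem:non-standard-triple} applies verbatim to the enlarged orbit, which therefore contains no bipartite matrix, yielding the required contradiction.
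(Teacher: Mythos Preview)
Your proposal is correct and follows the same overall strategy as the paper: use the primary decomposition to reduce to the local triples, invoke Theorem~\ref{theorem:toptor} for the first claim, and for the second claim match compact open subgroups under a hypothetical isomorphism $L\cong A\times\hat A$ to force $G_p\cong B_p\times B_p^\perp$ for almost all $p$, then derive a contradiction from the matrix analysis of Theorem~\ref{theorem:non-standard-triple}.

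The one substantive difference is in the final step. The paper simply asserts that $G_p\cong B_p\times B_p^\perp$ ``forces $G_p$ to satisfy \ref{item:4}'' and that this ``contradicts the proof of Theorem~\ref{theorem:non-standard-triple}.'' This is terse: condition \ref{item:4} as literally stated refers to the extension built from the \emph{given} $\nabla_p$, whereas the product decomposition $A_p\times\widehat{A_p}$ only controls the underlying extension $0\to G_p\to L_p\to L_p/G_p\to 0$ up to an arbitrary identification of the quotient with $\widehat{G_p}$. You correctly unpack this by passing to the two-sided orbit $\{\sigma\alpha\transpose\tau\}$ rather than the congruence orbit $\{\sigma\alpha\transpose\sigma\}$, and then verifying that the valuation argument from Theorem~\ref{theorem:non-standard-triple} survives one-sided row or column operations. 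Your check is right: the invariant $v(\alpha'_{ik})\geq s+1-i-k$, with equality whenever $i+k\leq s+1$, is preserved by each admissible elementary row operation (the case $i<j$ uses $N>1$, the case $i>j$ is immediate), and symmetrically for columns; this is exactly what the paper's appeal to ``the proof of'' Theorem~\ref{theorem:non-standard-triple} is implicitly invoking. So your argument is a faithful and more explicit version of the paper's.
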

\begin{proof}
  That $(L,\nabla)$ is not standard is a consequence of Theorems~\ref{theorem:non-standard-triple} and~\ref{theorem:toptor}.
  Suppose that $\phi:A\times \hat A\to L$ is an isomorphism for some \lca{} $A=\rdp(A_p,B_p)$.
  Then $\phi$ maps $B_p\times B_p^\perp$ onto $G_p$ for all but finitely many $p$.
  This forces $G_p$ to satisfy \ref{item:4} for all but finitely many $p$, which contradicts the proof of Theorem~\ref{theorem:non-standard-triple}.
\end{proof}
\begin{theorem}
  \label{theorem:divisible}
  Let $\nabla$ be a symplectic self-duality of a divisible \lca{} $L$.
  Then $(L,\nabla)$ is a standard pair.
\end{theorem}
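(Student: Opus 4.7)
The plan cascades three reductions: first from $L$ to its residual summand; then from the residual divisible self-dual group to its primary decomposition as a topological torsion group; and finally to a standard-triple question addressed by the matrix-reduction theorems of Section~\ref{sec:finite}. To begin, apply Theorem~\ref{theorem:reduction-to-residual} and write $L\cong\R^n\times\Q^{\oplus\lambda}\times\hat\Q^\mu\times E$. Each factor is a direct summand of the divisible group $L$ and hence divisible; in particular the residual summand $E$ is divisible and inherits a symplectic self-duality $\nabla'$, and Theorem~\ref{theorem:reduction-to-residual} reduces the standardness question for $(L,\nabla)$ to that for $(E,\nabla')$.

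Next I claim that $E$ is a topological torsion group. Residuality gives $D(E)\subset B(E)$, and divisibility gives $D(E)=E$, so $E=B(E)$. Self-duality gives $\hat E\cong E$, which is therefore also divisible, so the same reasoning yields $\hat E=B(\hat E)$. For an \lca{} with a compact open subgroup $G$, the conditions $E=B(E)$ and $\hat E=B(\hat E)$ are equivalent to $E/G$ and $\hat G$ being torsion groups, respectively; hence $E$ is a topological torsion group. Theorem~\ref{theorem:toptor} then decomposes $E=\rdp_p(E_p,G_p)$ into divisible symplectically self-dual topological $p$-groups. By Theorem~\ref{theorem:toptor}\ref{item:9}, $G_p$ is already a compact open maximal isotropic subgroup of $E_p$ for all but finitely many $p$; at the finitely many exceptional primes we replace $G_p$ with some compact open maximal isotropic subgroup furnished by Lemma~\ref{lemma:compact-open-maximal-isotropic}. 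By Theorem~\ref{theorem:toptor}\ref{item:7} it now suffices to show that each of the resulting triples is standard.

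Fix a prime $p$ and a compact open maximal isotropic subgroup $G_p\subseteq E_p$. Divisibility of $E_p$ forces the discrete $p$-torsion quotient $E_p/G_p$ to be a divisible $p$-torsion abelian group, so $E_p/G_p\cong(\Q_p/\Z_p)^{(I_p)}$ for some index set $I_p$; the maximal isotropy of $G_p$ together with Lemma~\ref{lemmma:max-iso} identifies $G_p$ with $\widehat{E_p/G_p}\cong\Z_p^{I_p}$. Invoking Theorem~\ref{theorem:Z_p^l} then certifies $(E_p,\nabla_p,G_p)$ as a standard triple, closing the argument via Theorem~\ref{theorem:toptor}. The principal obstacle is that Theorem~\ref{theorem:Z_p^l} is stated only for finite rank $l$, whereas $I_p$ may be infinite; one must therefore show that the matrix-reduction argument of Theorem~\ref{theorem:homogeneous} lifts uniformly in the rank through the inverse system $E(\widehat{G_p},G_p)=\varprojlim_k E(\widehat{G_p/p^kG_p},G_p)$, or alternatively use divisibility and self-duality together to force $I_p$ to be finite. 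This is the technical heart of the proof.
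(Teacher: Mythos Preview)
Your outline follows the paper's proof closely: reduce to the residual part via Theorem~\ref{theorem:reduction-to-residual}, recognise it as a topological torsion group, and then invoke Theorem~\ref{theorem:toptor} together with Theorem~\ref{theorem:Z_p^l}. The difference is that the paper does not derive the structure of $E_p$ from scratch; it simply quotes Wu's classification of divisible self-dual \lca s \cite[Corollary~6.14]{MR1173767}, which already gives $E_p\cong\Q_p^{n_p}$ with $n_p$ a \emph{finite} integer. This dispatches in one stroke both the finiteness of your $I_p$ and the ``pair'' condition in Theorem~\ref{theorem:toptor}\ref{item:7}, since Corollary~\ref{cor:fdvs} applies to $\Q_p^{n_p}$ for every prime (including $p=2$, where Theorem~\ref{theorem:Z_p^l} is unavailable).

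So your proposal has a genuine, self-acknowledged gap: you have not shown $I_p$ is finite, and without that Theorem~\ref{theorem:Z_p^l} does not apply. The matrix-reduction route through the inverse system is not the way to go; the clean fix is the one you list second. Since $E_p$ is self-dual and divisible, $\hat E_p\cong E_p$ is divisible, hence $E_p$ is torsion-free (the dual of a divisible \lca{} has no torsion). Thus multiplication by $p$ is bijective on the topological $p$\dash group $E_p$, making $E_p$ a locally compact topological $\Q_p$\dash vector space, which is necessarily finite-dimensional. With $E_p\cong\Q_p^{n_p}$ in hand, Corollary~\ref{cor:fdvs} gives standardness of every pair $(E_p,\nabla_p)$, and Theorem~\ref{theorem:Z_p^l} gives standardness of the triple at all but finitely many (odd) primes; Theorem~\ref{theorem:toptor}\ref{item:7} then finishes. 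Note that you must keep the pair condition separate: your attempt to make every triple standard would stumble at $p=2$, where Theorem~\ref{theorem:Z_p^l} is not stated.
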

\begin{proof}
  By the classification theorem for divisible self-dual \lca{s} \cite[Corollary~6.14]{MR1173767},
  \begin{equation*}
    L\cong \R^n\times \Q^{\oplus\lambda}\times \hat \Q^\lambda\times \rdp_{p\text{ prime}} (L_p,G_p)
  \end{equation*}
  where $L_p=\Q_p^{n_p}$ and $G_p=\Z_p^{n_p}$ for some cardinal number $\lambda$ and integers and integers $n$ and $n_p$.
  Theorem~\ref{theorem:reduction-to-residual} can be used to reduce to the case where $L$ is residual, which means that $L=\rdp_p(L_p,G_p)$.
  Let $\nabla_p$ be the restriction of $\nabla$ to $L_p$.  The morphism
  $\nabla_p$ takes values in $\widehat{L_p}$, giving a symplectic self-duality of $L_p$.
  By Corollary~\ref{cor:fdvs}, $(L_p,\nabla_p)$ is a standard pair for all $p$.
  For all but finitely many $p$, $G_p$ is a maximal isotropic subgroup of $L_p$.
  For these $p$, $(L_p,\nabla_p,G_p)$ is a standard triple by Theorem~\ref{theorem:Z_p^l}.
  Thus Theorem~\ref{theorem:divisible} follows from Theorem~\ref{theorem:toptor}.
\end{proof}
The  treatment below is based on the idea of the canonical filtration of a \lca{} that can be found in \cite{MR2329311}.  Namely, to each \lca{} $L$ one associates a unique filtration by closed subgroups $$L_{\T}\subset F_{\Z}L\subset L$$ where $L_{\T}$ is compact and connected, $F_{\Z}L/L_{\T}\cong \R^n\oplus L_\toptor$ where $L_\toptor$ is a topological torsion group, and $L_\Z:=L/F_{\Z}L$ is discrete and torsion-free. More precisely, using the Pontryagin-van Kampen structure theorem mentioned in Section~\ref{sec:survey} we may remove $\R^n$ and assume that $L$ has a compact open subgroup $G$.
Let $L_\T$ denote the identity component of $G$.
Let $F_\Z L$ denote the pre-image in $L$ of the torsion subgroup of $L/L_\T$.
Neither of these subgroups depends on the choice of $G$.  Note that  the dual filtration on $\hat{L}$, i.e., $$\widehat{L_\Z}=\widehat{L/F_\Z L}\subset\widehat{L/L_\T}\subset \hat{L}$$ is the canonical filtration on $\hat{L}$.

\begin{remark}\label{HSremark}
  Let $\nabla$ be a symplectic self-duality of a \lca{} $L$.  One immediately notes the following facts.  The isomorphism $\nabla$ identifies $L_\T$ with $\widehat{L_\Z}$, $F_\Z L$ with $L_\T^\perp$, and $L_\Z$ with $\widehat{L_\T}$. The subgroup $L_\T$ is isotropic, since any map from a connected group to a discrete group (and $\widehat{L_\T}$ is discrete) is trivial.  Recall that for an isotropic subgroup $G\subset L$, we set $L_G$ to be the subquotient $\nabla^{-1}G^\perp/G$, which inherits a symplectic self-duality $\nabla_G$.  In our case we see that $L_{L_\T}$ is exactly $F_\Z L/L_\T$, i.e., $$L_{L_\T}=\R^n\oplus L_\toptor.$$
\end{remark}
 We also need the following \cite[Proposition~3.3]{MR2329311}.
\begin{theorem}\label{HSlemma}
If $\iota:A'\rightarrow A$ is an open embedding of \lca{}s and $D$ is a divisible \lca{}, then $\iota^*:\text{Hom}(A,D)\rightarrow \text{Hom}(A',D)$ is surjective.
\end{theorem}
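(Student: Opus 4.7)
The plan is to first produce an abstract-group extension using the injectivity of divisible abelian groups, and then upgrade it to a continuous extension using the openness of $\iota(A')$.

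First I would invoke Baer's criterion: since $D$ is a divisible abelian group (as an abstract group, ignoring the topology), $D$ is injective in the category of abelian groups. The open embedding $\iota$ is, in particular, an injective homomorphism of abelian groups, so the induced map $\iota^*\colon \Hom_{\mathrm{Ab}}(A,D)\to \Hom_{\mathrm{Ab}}(A',D)$ on abstract-group homomorphisms is surjective. Thus, given any continuous $f\in \Hom(A',D)$, there exists an abstract-group homomorphism $\tilde f\colon A\to D$ with $\tilde f\circ \iota=f$.

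Next I would upgrade continuity. Identify $A'$ with its image under $\iota$, so that $A'$ is an open subgroup of $A$. A group homomorphism between topological groups is continuous if and only if it is continuous at the identity, and in a topological group any neighborhood of $0$ in an open subgroup is also a neighborhood of $0$ in the ambient group. Consequently, given any neighborhood $V$ of $0$ in $D$, the continuity of $f=\tilde f|_{A'}$ at $0$ yields an open neighborhood $U$ of $0$ in $A'$ with $\tilde f(U)\subset V$; since $A'$ is open in $A$, $U$ is also an open neighborhood of $0$ in $A$. Hence $\tilde f$ is continuous at $0$, and so is a continuous homomorphism $A\to D$ lifting $f$, proving surjectivity of $\iota^*$.

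No significant obstacle is expected: the key inputs are the classical fact that divisible abelian groups are injective (Baer), and the elementary topological observation that open subgroups carry a cofinal system of neighborhoods of $0$. If any subtlety arises, it is only in being explicit that divisibility of $D$ as an LCA implies divisibility as an abstract group, which is immediate from the definition.
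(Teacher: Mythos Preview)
Your proof is correct. The paper itself does not prove this statement; it simply quotes it as \cite[Proposition~3.3]{MR2329311} (Hoffmann--Spitzweck), so there is no in-paper argument to compare against. Your approach---extend abstractly using injectivity of divisible abelian groups, then observe that an open subgroup provides a neighborhood basis of $0$ in the ambient group so that continuity at $0$ transfers automatically---is exactly the standard proof one would give, and it goes through without issue.
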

We now describe a situation where the question of standardness can be reduced to topological torsion groups.
\begin{theorem}
  \label{theorem:reduction-to-toptor}
  Let $\nabla$ be a symplectic self-duality of a \lca{} $L$.
  If $L_\T$ splits from $L$ then $(L,\nabla)$ is a standard pair if $(L_{L_\T},\nabla_{L_\T})$ is.
\end{theorem}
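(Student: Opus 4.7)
The plan is to reduce to the factor $L_\T \times \widehat{L_\T}$ by the peeling lemma, and then handle that factor using the divisibility of $L_\T$.

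First I would apply Lemma~\ref{peeeling-lemma} with $A = L_\T$. The subgroup $L_\T$ is closed (as the identity component of a compact open subgroup), isotropic (Remark~\ref{HSremark}), and split from $L$ by hypothesis, so the peeling lemma decomposes $(L,\nabla)$ as an orthogonal sum $(L_\T \times \widehat{L_\T}, \nabla') \oplus (L_{L_\T}, \nabla_{L_\T})$ in which $L_\T$ is isotropic for $\nabla'$. By assumption the second summand is standard, and orthogonal sums of standard pairs are standard, so it suffices to prove that $(L_\T \times \widehat{L_\T}, \nabla')$ is itself standard.

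Writing $\nabla'$ as a $2 \times 2$ block matrix with respect to the decomposition $L_\T \times \widehat{L_\T}$, the isotropy of $L_\T$ kills the $(1,1)$ block; the fact that $\nabla'$ is a self-duality forces the off-diagonal block $\nabla'_{12}\colon \widehat{L_\T} \to \widehat{L_\T}$ to be an isomorphism; and the symplectic condition forces $\widehat{\nabla'_{22}} = -\nabla'_{22}$, where $\nabla'_{22}\colon \widehat{L_\T} \to L_\T$. A direct check, using that $\nabla'_{12}$ is an isomorphism, shows that $L_\T$ is in fact maximal isotropic in $L_\T \times \widehat{L_\T}$. I would then apply Lemma~\ref{lemma:split-maximal-isotropics} (via Remark~\ref{remark:split-maximal-isotropics}) with complement $\widehat{L_\T}$ to produce an isotropic complement to $L_\T$, and conclude by Lemma~\ref{lemma:polarization-implies-standard}, which makes the triple, and hence the pair, standard.

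The one step requiring substantive input is the $2$\dash divisibility hypothesis. Since $L_\T$ is compact and connected it is divisible as an abstract abelian group, hence injective in the category of abelian groups; and since $\widehat{L_\T}$ is discrete, continuous homomorphisms $\widehat{L_\T} \to L_\T$ coincide with abstract ones. Consequently $\Hom(\widehat{L_\T}, L_\T)$ is itself divisible, so multiplication by $2$ is surjective on it. Concretely, using Remark~\ref{remark:split-maximal-isotropics}, one sets $\phi = \tfrac{1}{2}\nabla'_{22}$; skew-symmetry of $\nabla'_{22}$ then yields $\hat\phi = -\phi$, so $\nabla'_{22} = \phi - \hat\phi$, as required. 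I do not expect any genuine obstacle beyond this divisibility check, which replaces the $2$\dash regularity hypothesis of Lemma~\ref{lemma:split-maximal-isotropics} by the fact that $L_\T$ is compact and connected.
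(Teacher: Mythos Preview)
Your argument follows essentially the same route as the paper: peel off $L_\T$ via Lemma~\ref{peeeling-lemma}, then handle the factor $L_\T\times\widehat{L_\T}$ with Lemmas~\ref{lemma:polarization-implies-standard} and~\ref{lemma:split-maximal-isotropics}, the key point being $2$\dash divisibility of $\Hom(\widehat{L_\T},L_\T)$. The paper obtains this divisibility by invoking Theorem~\ref{HSlemma} (multiplication by~$2$ is an open self-embedding of the discrete torsion-free group $\widehat{L_\T}$, and $L_\T$ is divisible), whereas you reduce to abstract abelian groups and use injectivity of $L_\T$; both are fine.

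One small gap: the inference ``$L_\T$ injective $\Rightarrow$ $\Hom(\widehat{L_\T},L_\T)$ divisible'' is not valid from injectivity of the target alone (e.g.\ $\Hom(\Z/2\Z,\Q/\Z)\cong\Z/2\Z$). You also need that $\widehat{L_\T}$ is torsion-free---which it is, since $L_\T$ is connected---so that multiplication by~$2$ on $\widehat{L_\T}$ is injective and applying the exact functor $\Hom(-,L_\T)$ yields surjectivity of multiplication by~$2$ on the Hom group. With that clause added, your proof is complete.
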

\begin{proof}
  As pointed out in Remark~\ref{HSremark}, the subgroup $L_\T$ is isotropic, and using the peeling lemma, $(L,\nabla)$ is the orthogonal sum of $(L_\T\times \widehat{L_\T},\nabla')$ and $(L_{L_\T},\nabla_{L_\T})$ where $\nabla'$ is a symplectic self-duality of $L_\T\times \widehat{L_\T}$ for which $L_\T$ is isotropic.

By Lemmas~\ref{lemma:polarization-implies-standard} and~\ref{lemma:split-maximal-isotropics}, to complete the proof, it suffices to show that  the component $\nabla':\widehat{L_\T}\rightarrow L_\T$ is divisible by $2$.  We claim that in fact any map from $\widehat{L_\T}$ to $L_\T$ is divisible by $2$.  Note that since $\widehat{L_\T}$ is discrete and torsion-free, the multiplication by $2$ map is an open embedding.  Of course $L_\T$ itself is divisible and so by Theorem~\ref{HSlemma} we are done.
\end{proof}
It is worth pointing out that there exist pairs $(L,\nabla)$ where $L_\T$ does not split off, and we have seen examples (Theorem~\ref{theorem:non-standard-pair}) where $(L_{L_\T},\nabla_{L_\T})$ is non-standard.  Thus both hypotheses of Theorem~\ref{theorem:reduction-to-toptor} are non-trivial, however, below we describe some cases where they are satisfied.
\begin{cor}
  \label{cor:torus-toptor}
  Let $\nabla$ be a symplectic self-duality of a \lca{} $L$.
  If $L_\T$ is isomorphic to $\T^n$ for some positive integer $n$, then $(L,\nabla)$ is a standard pair if $(L_{L_\T},\nabla_{L_\T})$ is.
\end{cor}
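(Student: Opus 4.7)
The plan is to invoke Theorem~\ref{theorem:reduction-to-toptor}, which reduces the corollary to showing that $L_\T$ splits off as a direct summand of $L$ in the category of \lca{s}. Once the splitting is established, the hypothesis that $(L_{L_\T},\nabla_{L_\T})$ is standard will immediately give the standardness of $(L,\nabla)$.

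By Pontryagin duality, exhibiting a continuous retraction $L \to L_\T$ of the inclusion $\iota: L_\T \hookrightarrow L$ is equivalent to exhibiting a continuous section of the dual map $\hat\iota: \hat L \to \widehat{L_\T}$. This map is surjective, since dualizing the exact sequence $0\to L_\T\to L\to L/L_\T\to 0$ produces the exact sequence $0\to \widehat{L/L_\T}\to \hat L\to \widehat{L_\T}\to 0$. The hypothesis $L_\T\cong \T^n$ identifies $\widehat{L_\T}$ with $\Z^n$, a finitely generated free abelian group. I would therefore lift the standard generators $e_1,\dots,e_n$ of $\Z^n$ to preimages $\tilde e_1,\dots,\tilde e_n\in \hat L$ and define $s:\Z^n\to \hat L$ by $e_i\mapsto \tilde e_i$. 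Since $\Z^n$ is free abelian this extends uniquely to a group homomorphism, and because $\Z^n$ is discrete, $s$ is automatically continuous. By construction $\hat\iota\circ s=\id_{\Z^n}$.

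Dualizing $s$ produces a continuous homomorphism $\hat s:L\to L_\T$ with $\hat s\circ \iota=\id_{L_\T}$. The resulting continuous bijection $L_\T\times\ker(\hat s)\to L$, $(t,k)\mapsto \iota(t)+k$, is a homeomorphism by the open mapping theorem for \lca{s}, so $L\cong L_\T\times\ker(\hat s)$ and $L_\T$ splits off. Theorem~\ref{theorem:reduction-to-toptor} then yields the corollary.

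I do not foresee a genuine obstacle in this argument: the entire proof rests on the projectivity of $\Z^n$ in the category of abelian groups combined with Pontryagin duality, and continuity of the section is free because the domain is discrete. The one point meriting care is the assertion that a closed subgroup of a \lca{} splits whenever its dual restriction map admits a continuous section, which is standard and was implicitly used in Theorem~\ref{theorem:reduction-to-residual} via the peeling lemma.
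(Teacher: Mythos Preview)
Your proof is correct and follows the same approach as the paper: both establish that $L_\T$ splits from $L$ and then invoke Theorem~\ref{theorem:reduction-to-toptor}. The paper simply cites the splitting of $\T^n$ as a known fact \cite[6.16]{0509.22003}, whereas you supply the standard argument for it via the projectivity of $\widehat{L_\T}\cong\Z^n$ in the category of discrete abelian groups and Pontryagin duality.
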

\begin{proof}
 The subgroup $L_\T$ splits from $L$ \cite[6.16]{0509.22003} so Theorem~\ref{theorem:reduction-to-toptor} applies.
\end{proof}
\begin{cor}
  \label{cor:finite-toptor}
  Let $\nabla$ be a symplectic self-duality of a \lca{} $L$.
  If $L_\toptor$ is finite then $(L,\nabla)$ is a standard pair.
\end{cor}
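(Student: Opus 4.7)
My plan is to reduce the standardness of $(L,\nabla)$ to that of the topological torsion subquotient in the canonical filtration $L_\T\subset F_\Z L\subset L$, and then invoke Corollary~\ref{cor:finite}.

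By Theorem~\ref{theorem:reduction-to-residual} I may assume $L$ is residual; one checks routinely that the factors peeled off in that reduction ($\R^n$, $\Q^{\oplus\lambda}$ and $\hat\Q^\mu$) contribute nothing to $L_\toptor$, so the residual part still satisfies the hypothesis. In particular $L$ now has a compact open subgroup, and by Remark~\ref{HSremark} we have $L_{L_\T}\cong L_\toptor$.

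The key step is to show that $L_\T$ splits from $L$, so that Theorem~\ref{theorem:reduction-to-toptor} applies. Being a finite Hausdorff topological group, $L_\toptor$ is discrete; hence $F_\Z L/L_\T=L_\toptor$ is discrete and $L_\T$ is open in $F_\Z L$, while $L/F_\Z L=L_\Z$ is discrete by definition so $F_\Z L$ is open in $L$. Since $L_\T$ is compact connected abelian, it is divisible. I will then apply Theorem~\ref{HSlemma} to the open embedding $L_\T\hookrightarrow F_\Z L$ to extend the identity $L_\T\to L_\T$ to a continuous homomorphism $F_\Z L\to L_\T$, and apply it once more to $F_\Z L\hookrightarrow L$ to extend further to a continuous retraction $r:L\to L_\T$. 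The map $x\mapsto(r(x),x-r(x))$ exhibits $L$ as the topological direct sum $L_\T\oplus\ker r$.

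With the splitting in hand, Theorem~\ref{theorem:reduction-to-toptor} reduces standardness of $(L,\nabla)$ to that of $(L_{L_\T},\nabla_{L_\T})=(L_\toptor,\nabla_{L_\T})$, which is a symplectic self-duality of a finite abelian group and is therefore standard by Corollary~\ref{cor:finite}. The principal obstacle is the splitting of $L_\T$: it is precisely the finiteness (and hence discreteness) of $L_\toptor$ that turns $L_\T\hookrightarrow F_\Z L$ into an open embedding, and that is exactly what is needed to convert the divisibility of $L_\T$, via Theorem~\ref{HSlemma}, into the required continuous retraction.
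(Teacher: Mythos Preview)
Your proof is correct and follows essentially the same route as the paper: reduce to the case with a compact open subgroup, use the finiteness of $L_\toptor$ to see that $L_\T$ is open in $L$, apply Theorem~\ref{HSlemma} with the divisibility of $L_\T$ to split it off, and finish via Theorem~\ref{theorem:reduction-to-toptor} and Corollary~\ref{cor:finite}. The only cosmetic difference is that the paper deduces openness of $L_\T$ in one step (noting $F_\Z L$ is compact, hence $\widehat{L/L_\T}$ is compact, hence $L/L_\T$ is discrete) and applies Theorem~\ref{HSlemma} once, whereas you pass through $F_\Z L$ and invoke Theorem~\ref{HSlemma} twice; the extra step is harmless but unnecessary, since an open subgroup of an open subgroup is open.
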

\begin{proof}
  Using the peeling lemma as explained in the proof of Theorem~\ref{theorem:reduction-to-residual} we may assume that $L$ has a compact open subgroup, i.e., that $L$ has no $\R^n$ part.
  Since $L_\toptor$ is finite, $F_\Z L$ is compact.  Consider the exact sequence \begin{equation}\label{ses}0\rightarrow L_\T\rightarrow L\rightarrow L/L_\T\rightarrow 0;\end{equation} we claim that it splits.  Indeed  $\nabla$ identifies $F_\Z L$ with $\widehat{L/L_\T}$ so that $L/L_\T$ is discrete, i.e., $L_\T\rightarrow L$ is an open embedding and so by Theorem~\ref{HSlemma} it splits off since $L_\T$ is divisible.  Furthermore, note that $(L_{L_\T},\nabla_{L_\T})$ is standard by Corollary~\ref{cor:finite} as $L_{L_\T}$ is a finite group.
\end{proof}

\end{document}